\newtheorem{theorem}{Theorem}[section]
\newtheorem{lemma}[theorem]{Lemma}
\newtheorem{fact}[theorem]{Fact}
\newtheorem{proposition}[theorem]{Proposition}
\newtheorem{corollary}[theorem]{Corollary}
\theoremstyle{definition}
\newtheorem{definition}[theorem]{Definition}
\newtheorem{example}[theorem]{Example}
\theoremstyle{remark}
\newtheorem{remark}[theorem]{Remark}
\numberwithin{equation}{section}
\newcommand{\R}{\mathbb{R}}
\newcommand{\C}{\mathbb{C}}
\newcommand{\N}{\mathbb{N}}
\newcommand{\K}{\mathbb{K}}
\newcommand{\T}{\mathbb{T}}
\DeclareMathOperator{\co}{co}
\DeclareMathOperator{\e}{e}
\DeclareMathOperator{\re}{Re}
\DeclareMathOperator{\id}{Id}
\newcommand{\cconv}{\overline{\co}}
\newcommand{\nn}[1]{{\left\vert\kern-0.25ex\left\vert\kern-0.25ex\left\vert #1
		\right\vert\kern-0.25ex\right\vert\kern-0.25ex\right\vert}}
\renewcommand{\geq}{\geqslant}
\renewcommand{\leq}{\leqslant}
\renewcommand{\flat}{\mathrm{Fl}_\Gamma}
\newcommand{\Free}{{\mathcal F}}
\newcommand{\lip}{{\mathrm{lip}}_0}
\newcommand{\Lip}{{\mathrm{Lip}}_0}
\newcommand{\Lin}{\mathcal{L}}
\newcommand{\Linw}{\Lin_{w^*-w^*}}
\newcommand{\cpt}{\mathcal{K}}
\newcommand{\ASE}{\operatorname{ASE}}
\newcommand{\SE}{\operatorname{SE}}
\newcommand{\SA}{\operatorname{SNA}}
\newcommand{\NA}{\operatorname{NA}}
\newcommand{\F}[1]{\mathcal{F}(#1)}
\newcommand{\aconv}{\mathop\mathrm{aconv}}
\newcommand{\ext}[1]{\operatorname{ext}(#1)}
\newcommand{\strexp}[1]{\operatorname{str-exp}(#1)}
\newcommand{\wstrexp}[1]{\ensuremath{w^*}\operatorname{-str-exp}(#1)}
\newcommand{\Mol}[1]{\operatorname{Mol}\left(#1\right)}
\newcommand{\dist}{\operatorname{dist}}
\newcommand{\pten}{\ensuremath{\widehat{\otimes}_\pi}}
\newcommand{\eps}{\varepsilon}
\begin{document}

\title[Norm attaining operators]{Residuality in the set of norm attaining operators between Banach spaces}
\date{March 8th, 2022}

\author[Jung]{Mingu Jung}
\address[Jung]{School of Mathematics, Korea Institute for Advanced Study, 02455 Seoul, Republic of Korea \newline
\href{http://orcid.org/0000-0000-0000-0000}{ORCID: \texttt{0000-0003-2240-2855} }}
\email{\texttt{jmingoo@kias.re.kr}}
\urladdr{\url{https://clemg.blog/}}

\author[Mart\'in]{Miguel Mart\'in}
\address[Mart\'{\i}n]{Universidad de Granada, Facultad de Ciencias.
Departamento de An\'{a}lisis Matem\'{a}tico, 18071 Granada, Spain \newline
	\href{http://orcid.org/0000-0003-4502-798X}{ORCID: \texttt{0000-0003-4502-798X} }}
\email{mmartins@ugr.es}
\urladdr{\url{https://www.ugr.es/local/mmartins}}

\author[Rueda Zoca]{Abraham Rueda Zoca}
\address[Rueda Zoca]{Universidad de Murcia, Departamento de Matem\'aticas, Campus de Espinardo 30100 Murcia, Spain
 \newline
\href{https://orcid.org/0000-0003-0718-1353}{ORCID: \texttt{0000-0003-0718-1353} }}
\email{\texttt{abraham.rueda@um.es}}
\urladdr{\url{https://arzenglish.wordpress.com}}

\begin{abstract}
We study the relationship between the residuality of the set of norm attaining functionals on a Banach space and the residuality and the denseness of the set of norm attaining operators between Banach spaces. Our first main result says that if $C$ is a bounded subset of a Banach space $X$ which admit an LUR renorming satisfying that, for every Banach space $Y$, the operators $T$ from $X$ to $Y$ for which the supremum of $\|Tx\|$ with $x\in C$ is attained are dense, then the $G_\delta$ set of those functionals which strongly exposes $C$ is dense in $X^*$. This extends previous results by J.\ Bourgain and K.-S.\ Lau. The particular case in which $C$ is the unit ball of $X$, in which we get that the norm of $X^*$ is Fr\'{e}chet differentiable at a dense subset, improves a result by J.\ Lindenstrauss and we even present an example showing that Lindenstrauss' result was not optimal.
In the reverse direction, we obtain results for the density of the $G_\delta$ set of absolutely strongly exposing operators from $X$ to $Y$ by requiring that the set of strongly exposing functionals on $X$ is dense and conditions on $Y$ or $Y^*$ involving RNP and discreteness on the set of strongly exposed points of $Y$ or $Y^*$. These results include examples in which even the denseness of norm attaining operators was unknown.
We also show that the residuality of the set of norm attaining operators implies the denseness of the set of absolutely strongly exposing operators provided the domain space and the dual of the range space are separable, extending a recent result for functionals.
Finally, our results find important applications to the classical theory of norm-attaining operators, to the theory of norm-attaining bilinear forms, to the geometry of the preduals of spaces of Lipschitz functions, and to the theory of strongly norm-attaining Lipschitz maps. In particular, we solve a proposed open problem showing that the unique predual of the space of Lipschitz functions from the Euclidean unit circle fails to have Lindenstrauss property A.
\end{abstract}

\subjclass[2020]{Primary 46B04; Secondary 26A16, 46B20, 46B22, 46B25, 46B28, 47A07, 54E50}
\keywords{absolutely strongly exposing; strongly exposing functionals; residuality; norm-attaining operators; strongly norm attaining Lipschitz functions and maps; norm attaining bilinear forms and maps}

\maketitle

\thispagestyle{plain}

\setcounter{tocdepth}{1}
\tableofcontents

\setcounter{tocdepth}{2}

\section{Introduction}
Given Banach spaces $X$ and $Y$, we write $\Lin(X,Y)$ to denote the space of all (bounded linear) operators from $X$ to $Y$, and $\NA(X, Y)$ for the set of norm attaining operators (i.e., those $T\in \Lin(X,Y)$ for which there is a norm one $x\in X$ such that $\|T\|=\|Tx\|$). Our notation is standard, and it can be found in Subsection \ref{subsection:notation} together with the definition of some needed well known concepts. The study of the denseness of the set of norm attaining operators started with the celebrated result by Bishop and Phelps of the 1960's that $\NA(X,\K)$ is dense in $X^*\equiv \Lin(X,\K)$ for every Banach space $X$ ($\K$ denotes the base field $\R$ or $\C$). Shortly afterward, J.~Lindenstrauss initiated a systemic study on norm attaining operators between Banach spaces \cite{Lin}. He introduced two properties -- nowadays called (Lindenstrauss) properties A and B -- as follows: a Banach space $X$ has \emph{property A} if $\NA(X, W)$ is dense in $\Lin (X, W)$ for every Banach space $W$, and a Banach space $Y$ has \emph{property B} if $\NA(Z,Y)$ is dense in $\Lin(Z,Y)$ for every Banach space $Z$. What he proved is, among other results, that reflexive spaces and those spaces for which the unit ball is the closed convex hull of a set of uniformly strongly exposed points, have property A. It is also shown that finite-dimensional spaces whose dual unit ball have finitely many extreme points up to rotations (in the real case, these are finite-dimensional spaces whose unit ball is a polyhedron) and subspaces of $\ell_\infty$ containing the canonical copy of $c_0$ (among other spaces) have property B. On the other hand, Lindenstrauss presented a useful necessary condition for Banach spaces to have property A. Namely, if a Banach space admits an LUR renorming and has property A, then its closed unit ball is the closed convex hull of its strongly exposed points. Up to our knowledge, this is the strongest necessary condition for property A which has appeared in the literature.

In 1977, J.\ Bourgain linked the study of the denseness of norm attaining operators with the Radon-Nikod\'{y}m property (RNP, for short) in the remarkable paper \cite{Bou}. It is shown that the RNP implies property A and, conversely, that if a Banach space $X$ has property A for all equivalent norms, then $X$ has the RNP (this formulation requires a refinement made by R. Huff \cite{Huff}). Since then, there has been an intensive research on norm attaining operators, an account of which can be found in the expository papers \cite{Aco-survey, Acosta-survey-BPBP, Martin2016}. Let us just mention here a few known results. With respect to property A, apart from the aforementioned results on the RNP, it is known that any weakly compactly generated space can be renormed with property A \cite{Scha1983}; examples of Banach spaces failing property A in their usual norm are $C_0(L)$ for infinite metrizable space $L$ and $L_1(\mu)$ when $\mu$ is not purely atomic \cite{Lin}. With respect to property B, it is known that infinite dimensional strictly convex Banach spaces fail property B and the same happens with any infinite-dimensional $L_1 (\mu)$  \cite{Acosta1999}, and that every Banach space can be renormed to have property B \cite{Partington}. Moreover, there are even compact operators which can not be approximated by norm attaining ones \cite{martinjfa}. It is not known, however, whether finite rank operators can be always approximated by norm attaining operators.

The main importance of Bourgain's paper \cite{Bou} is that it relates (via the concept of RNP) two geometric properties whose relationship was unknown at that moment: dentability and strong exposition. Indeed, Bourgain actually studied the following generalization of Lindenstrauss property A replacing the unit ball with a bounded closed convex subset. A bounded subset $C$ of a Banach space $X$ has the \emph{Bishop-Phelps property} if, for every Banach space $Y$, the set of those operators in $\Lin(X,Y)$ for which $\sup \{\|Tx\|\colon x\in C\}$ is a maximum, is dense in $\Lin(X,Y)$. A Banach space $X$ has the \emph{Bishop-Phelps property} if all of its bounded closed absolutely convex subsets have the property. What Bourgain proved is that a Banach space has the Bishop-Phelps property if and only if it has the RNP. This equivalence is proved through the following two separate results:
\begin{itemize}
\setlength\itemsep{0.3em}
 \item[(a)] If $C$ is a separable bounded closed convex set with the Bishop-Phelps property, then it is dentable (i.e.,\ it contains slices of arbitrarily small diameter).
 \item[(b)] If $B$ is a bounded closed absolutely convex subset of a Banach space $X$ such that every nonempty subset of $B$ is dentable (that is, $B$ is an \emph{RNP set}), then $B$ has the Bishop-Phelps property. Moreover, for every Banach space $Y$, the set of those operators in $\Lin(X,Y)$ which absolutely strongly expose $B$ is dense in $\Lin(X,Y)$.
\end{itemize}

The first main aim of this paper is to give an improvement of the item (a) above, which is presented in Section \ref{sec:BPandpropertyA}. Indeed, Theorem~\ref{thm:LURrenorming} shows that for a bounded subset $C$ of a Banach space $X$ admitting an LUR renorming and having the Bishop-Phelps property, the set of its strongly exposing functionals is dense in $X^*$ so, in particular, $C$ is contained in the closed convex hull of its strongly exposed points. This result generalized the same conclusion already known for weakly compact convex sets \cite{Bou2,Lau} and for bounded closed convex RNP sets \cite[Theorem~8]{Bou}. The particular case of Theorem \ref{thm:LURrenorming} for the unit ball of a Banach space $X$ (Corollary~\ref{corollary:PropertyAimpliesSEdense}) provides a necessary condition for Lindenstrauss property A stronger than the one given in \cite{Lin}: if $X$ admits an LUR renorming and has property A, then the norm of $X^*$ is Fr\'{e}chet differentiable at a dense subset of $X^*$ (or, equivalently, the set $\SE(X)$ of strongly exposing functionals on $X$ is dense in $X^*$). This allows us to show that Lindenstrauss' original necessary condition for property A is not sufficient, see Example~\ref{example:Lindenstrauss-not-sufficient} and Remark~\ref{remark:torusnotA}.

With respect to the result in item (b) above, let us give some comments. First, this result was extended from far by C.~Stegall \cite{Stegall1986} to a wide class of non-linear functions defined on bounded RNP sets, which is now known as the Stegall variational principle. Second, Bourgain's result is stronger than the mere denseness of norm attaining operators, even when the set $B$ is the unit ball of a Banach space. On the one hand, it provides the denseness of operators $T$ such that the application $x\longmapsto \|Tx\|$ attains an strong maximum (up to rotations). On the other hand, as the set of absolutely strongly exposing operators is a $G_\delta$ set, his result shows that $\NA(X,Y)$ is residual in $\Lin(X,Y)$. Some consequences of the residuality of the set of norm attaining operators are included in subsection~\ref{subsect:residuality-easy-consequences}. Let us comment that other topological properties of the set of norm attaining functionals and norm attaining operators have been studied in the literature, see \cite{DebsGodefroySaint} for functionals and \cite{Bachir} for operators, for instance.

In Section \ref{section:sufficient}, we investigate the possible density of the set $\ASE (X,Y)$ of absolutely strongly exposing operators from $X$ to $Y$, which is our second main aim in this paper. It is easy to show that the denseness of $\ASE(X,Y)$ in $\Lin(X,Y)$ for a non-trivial Banach space $Y$ implies that $\SE(X)$ is dense in $X^*$ (see Proposition~\ref{prop:goingdown}). We do not know when the converse result holds, so the following is our leading question.
\begin{equation}
  \tag{Q1}\label{Q1}
  \parbox{\dimexpr\linewidth-4em}{%
    \strut
\slshape Find conditions on $Y$ such that $\ASE (X,Y)$ is dense in $\Lin (X,Y)$ whenever $\SE(X)$ is dense in $X^*$.
    \strut
  }
\end{equation}
It is well known that $\SE(X)$ is dense in $X^*$ when $X$ has the RNP and in this case $\ASE(X,Y)$ is dense for every $Y$ thanks to (b) above. On the other hand, $\SE(X)$ is also dense in $X^*$ if the Banach space
$X$ is ALUR (in particular, if $X$ is LUR) or even when every element in the unit sphere of $X$ is strongly exposed (see Proposition \ref{prop:ALUR} where a stronger result is proved). Let us comment on this that we do not know whether being LUR implies property A, so in this case partial answers to \eqref{Q1} are especially interesting.

Some of our main results in Section~\ref{section:sufficient} are the following ones. Let $X$ be a Banach space for which $\SE(X)$ is dense in $X^*$. Then, the $G_\delta$ set $\ASE(X,Y)$ is dense in $\Lin(X,Y)$ provided the range space $Y$ is in one of the situations below:
\begin{enumerate}
\setlength\itemsep{0.3em}
\item $Y$ has property quasi-$\beta$ (Theorem \ref{thm:quasibeta}),
\item $Y$ has ACK$_\rho$ structure and $X$ or $Y$ are Asplund (Corollary~\ref{corollary:ACKrho-structure-XorY-Asplund}),
\item $Y$ has the RNP and $\strexp{B_Y}$ is either countable up to rotations or discrete up to rotations (Theorem \ref{coro:countably-1} and \ref{thm:RNP}),
\item $Y^*$ has the RNP and $\strexp{B_{Y^*}}$ is countable up to rotations (Theorem \ref{coro:countably-2}),
\item $Y^*$ has the RNP and for every sequence $\{y_n^*\}$ of elements of $\wstrexp{B_{Y^*}}$ which converges to an element $y_0^*\in \strexp{B_{Y^*}}$, there exist $n_0\in \N$ and a sequence $\{\theta_n\}$ in $\T$ such that $y_n^*=\theta_n y_0^*$ for every $n\geq n_0$ (Theorem \ref{thm:dualRNP}).
\end{enumerate}
We also give several concrete examples where the above result applies, including preduals of $\ell_1(\Gamma)$ spaces and finite-dimensional spaces for which the dual unit ball has countably many extreme points (see Example \ref{Example:quasibeta_example}, \ref{example:finite_dimensional_countable}, \ref{example:ell_1_gamma}, and Remark \ref{rem:appliACKstructure}). For the cases (3), (4), and (5), even the denseness of $\NA(X,Y)$ was unknown for many $X$s. Let us also mention that in Example \ref{Example:quasibeta_example}, new examples of Banach spaces having property quasi-$\beta$ (hence Lindenstrauss property B), such as real polyhedral predual spaces of $\ell_1$ and arbitrary (real or complex) closed subspaces of $c_0 (\Gamma)$, are exhibited.

By (b) above, $\ASE(X,Y)$ is dense in $\Lin(X,Y)$ for any arbitrary Banach space $Y$ when $X$ has the RNP. Besides, it was observed in \cite[Proposition~4.2]{cgmr2020} that for $X$ satisfying any of the known conditions which guarantee property A (namely, property $\alpha$, property quasi-$\alpha$, or having a norming subset of uniformly strongly exposed points), the set $\ASE(X,Y)$ is dense in $\Lin(X,Y)$ for every Banach space $Y$. We do not know, however, whether property A of $X$ implies the denseness of $\ASE(X,Y)$ for all Banach spaces $Y$.
\begin{equation}
  \tag{Q2}\label{Q2}
  \parbox{\dimexpr\linewidth-4em}{%
    \strut
\slshape Does property A of a Banach space $X$ imply that $\ASE (X,Y)$ is dense in $\Lin (X,Y)$ for every Banach space $Y$?
    \strut
  }
\end{equation}
Corollary~\ref{corollary:PropertyAimpliesSEdense} links \eqref{Q2} with the previous question \eqref{Q1} and allows us to present partial answers to the question \eqref{Q2} by applying the aforementioned results. Namely, if $X$ has property A and admits an equivalent LUR renorming and $Y$ satisfies one of the conditions (1)--(5) above, then $\ASE(X,Y)$ is dense in $\Lin (X, Y)$.

Furthermore, we obtain some results concerning the denseness of the set $\ASE (X,Y) \cap \mathcal{K} (X,Y)$, where $\mathcal{K} (X,Y)$ denotes the space of all compact linear operators from $X$ to $Y$.
We prove that the denseness of $\SE(X)$ in $X^*$ implies $\ASE (X,Y) \cap \mathcal{K} (X,Y)$ is dense in $\mathcal{K} (X,Y)$ not only when the Banach space $Y$ is in one of the above conditions (1), (3), (4), (5), but also when $Y$ is an $L_1$-predual, or has ACK$_\rho$ structure (this is (2) without the Asplundness condition on $X$ or $Y$), or it admits a countable James boundary (see Example \ref{thm:cpt}, Theorem \ref{propo:Gammaflat-just-denseness} and Corollary \ref{coro:countableboundary-compact}). In particular, if $Y$ is a (real) polyhedral space or $Y$ is a closed subspace of a $C(K)$ space for an scattered Hausdorff compact topological space $K$ (see Examples \ref{example:General_polyhedralcompact} and \ref{example:scattered}).

In the fourth section of the paper, we prove that the residuality of $\NA(X,Y)$ in $\Lin (X,Y)$ is equivalent to the denseness of the set of points of $\Lin (X,Y)$ at which the norm is Fr\'echet differentiable provided $X$ and $Y^*$ are separable Banach spaces, which generalizes a result of Guirao, Montesinos, and Zizler \cite[Theorem 3.1]{GMZ} (see Theorem \ref{thm:residual}). Moreover, by using a result of Moors and Tan \cite{MoorsTan} and one of Avil\'es et al.\ \cite{AMRR}, we observe that this equivalence also holds in the case when $X$ is a subspace of weakly compactly generated space, $Y$ is a reflexive space, and $\Lin (X,Y) = \mathcal{K}(X,Y)$ (see Remark \ref{remark:residual}).

In the final section, we present several applications of the results in Section \ref{sec:BPandpropertyA} and \ref{section:sufficient} to the geometry of Lipschitz free spaces, to the denseness of strongly norm attaining Lipschitz maps, and to the denseness of strongly norm attaining bilinear forms. First, we show that for a separable metric space $M$, property A of $\mathcal{F}(M)$ forces the density of the set of strongly norm attaining Lipschitz functions on $M$ (Corollary \ref{coro:corLipschitzfree}). As a consequence, we show that the Banach space $\mathcal{F}(\T)$ fails to have property A (Example \ref{example:Lindenstrauss-not-sufficient}), answering a question implicitly possed in \cite{cgmr2020}. We also present new examples of Banach spaces, coming from the theory of Lipschitz maps, which can be used as target spaces in the results of Section~\ref{section:sufficient} (Examples \ref{exam:extremeLipfree1}, \ref{exam:extremeLipfree3}, and \ref{exam:extremeLipfree2}). Next, some sufficient conditions on a metric space $M$ and on a Banach space $Y$ are discussed for the set of strongly norm attaining Lipschitz maps from $M$ into $Y$ to be dense (Corollary \ref{coro:SNAdensefromA}). Finally, some results on the density of strongly norm attaining bilinear forms are presented (Corollaries  \ref{cor:ChoiSong} and \ref{cor:bilinearRNP}) which improve previous results.

The rest of this introduction is devoted to introduce the needed notation and preliminaries (Subsection~\ref{subsection:notation}), to present some background on absolutely strongly exposing operators (Subsection~\ref{subsection:preminiaryASE}), and to expose some consequences of residuality of norm attaining operators (Subsection~\ref{subsect:residuality-easy-consequences}).

\subsection{Notation and preliminaries}\label{subsection:notation}
Here $\K$ denotes the field $\R$ of real numbers or $\C$ of complex numbers, and $\T$ is the subset of $\K$ of modulus one elements. Let $X$ and $Y$ be Banach spaces over $\K$. We write $B_X$ and $S_X$ to denote, respectively, the closed unit ball and the unit sphere of $X$. Given $x\in X$ and $r>0$, $B(x,r)$ is the open ball centered in $x$ with radius $r$.

The notation $\Lin(X,Y)$ stands for the space of all bounded linear operators from $X$ to $Y$ and we simply write $X^*\equiv \Lin(X,\K)$. We write $\Linw (Y^*, X^*)=\{T^*\colon T\in \Lin(X,Y)\}$ for the space of $w^*$-$w^*$-continuous bounded linear operators from $Y^*$ into $X^*$ which is isometrically isomorphic to $\Lin(X,Y)$. The space of all compact linear operators from $X$ to $Y$ is denoted by $\mathcal{K}(X,Y)$ and $X\pten Y$ denotes the projective tensor product of $X$ and $Y$.

For a nonempty bounded subset $C$ of $X$, a point $x_0 \in C$ is called an \emph{exposed point of $C$} if there is $x^* \in X^*$ such that
$$
\re x^*(x_0) = \sup_{x \in C} \re x^* (x)\ \ \text{ and } \ \ \{ x \in C \colon \re x^* (x) = \re x^* (x_0)\} = \{x_0\}.
$$
In this case, we say that $x^*$ \emph{exposes} $x_0$ and that $x^*$ is an \emph{exposing} functional. A point $x_0 \in C$ is called a \emph{strongly exposed point of $C$} if there is $x^* \in X^*$ such that $\re x^*(x_0) = \sup_{x \in C} \re x^* (x)$ and $\{x_n\}$ converges in norm to $x_0$ for all sequences $\{x_n\} \subseteq C$ such that $\lim_n \re x^* (x_n) = x^* (x_0)$. In this case, we say that $x^*$ \emph{strongly exposes} $x_0$ in $C$ and $x^*$ is said to be a \emph{strongly exposing functional} of $C$. We write $\strexp{C}$ and $\SE(C)$ for, respectively, the set of strongly exposed points of $C$ and the set of strongly exposing functionals of $C$. It is immediate that $\R^+\SE(C)=\SE(C)$; if $C$ is actually balanced (i.e.\ $\lambda C=C$ for every $\lambda\in \K$ with $|\lambda|=1$), then $\lambda\SE(C)=\SE(C)$ for every $\lambda\in \K\setminus\{0\}$. In the case $C=B_X$, we just write $\SE(X):=\SE(B_X)$ and call strongly exposing functionals to their elements. It is well known that $x^*\in \SE(X)$ if and only if the norm of $X^*$ is Fr\'{e}chet differentiable at $x^*$ (\v{S}mulyan test, see \cite[Corollary 1.5]{DGZ} for instance). If $X=Z^*$ is a dual space and $z^*\in B_{Z^*}$ is strongly exposed by some $x\in X\subset X^{**}$, we say that $z^*$ is a \emph{$w^*$-strongly exposed point}.

A point $x_0\in X$ is said to be a \emph{locally uniformly rotund point} (LUR point, for short) if whenever $\{x_n\}$ is a sequence in $X$ such that $\|x_n\|\leq \|x_0\|$ for every $n\in \N$ and $\|x_n+x_0\|\longrightarrow 2\|x_0\|$, then $\|x_n-x_0\|\longrightarrow 0$. A Banach space $X$ is \emph{locally uniformly rotund} (LUR, for short) if all the elements in $X$ are LUR points, equivalently, if all elements in $S_X$ are LUR points. It is known that weakly compactly generated (for short, WCG) Banach spaces (in particular, separable or reflexive Banach spaces) admit LUR equivalent renormings \cite[Theorem 1]{Troy}. A point $x_0\in X$ is said to be a \emph{rotund point} if for every $x\in X$ with $\|x\|\leq \|x_0\|$ and $\|x+x_0\|=2\|x_0\|$, we have that $x=x_0$. A Banach space $X$ is \emph{rotund} if all the elements in $X$ are rotund points, equivalently, if all elements in $S_X$ are rotund points, equivalently, if all elements in $S_X$ are extreme points. It is known that $\ell_\infty$ admits a rotund equivalent norm \cite[Theorem 8.13]{checos} but not an LUR equivalent norm (this follows since $\ell_\infty$ does not admit any equivalent norm with the Kadec-Klee property \cite[Theorem II.7.10]{DGZ}).

\subsection{Some preliminary results on absolutely strongly exposing operators}\label{subsection:preminiaryASE}
Let $X$, $Y$ be Banach spaces and let $B\subset X$ be a bounded closed absolutely convex subset.
An operator $T \in \Lin (X,Y)$ is said to \emph{absolutely strongly exposes} $B$ if there exists $x_0 \in B$ such that whenever a sequence $\{x_n\}$ in $B$ satisfies $\|T x_n \| \longrightarrow \sup\{\|Tx\|\colon x\in B\}$, then there exists a sequence $\{\theta_n\}$ of elements of $\T$ such that $\{\theta_n x_n\}\longrightarrow x_0$. When $B=B_X$, we just say that $T$ is an \emph{absolutely strongly exposing operator} and write $\ASE(X,Y)$ for the set of those operators. This is the case that we are most interested in. It is easy to see and well known that $\ASE(X,Y)$ is a $G_\delta$ subset of $\Lin(X,Y)$. Indeed, given $\eps>0$, we consider the subsets
$$
\mathcal{A}_\eps=\bigl\{T\in \Lin(X,Y)\colon S(T, \eta)\subset \T B(x_0,\eps)\text{ for some } x_0 \in X,\, \eta>0\bigr\}
$$
where $S(T, \eta)=\{x \in B_{X}\colon \|T(x)\| > \|T\| - \eta \}$. Then, each set $\mathcal{A}_\eps$ is open and, clearly,
$$
\ASE(X,Y)=\bigcap\nolimits_{n=1}^\infty \mathcal{A}_{r_n}
$$
for every sequence $\{r_n\}$ of positive numbers converging to $0$.

The main results on absolutely strongly exposing operators is, of course, its denseness when the domain space has the RNP (Bourgain). The next result contains two versions of this result. Item (a) follows routinely from Stegall variational principle \cite[Theorem~14]{Stegall1986} (as it is done in Theorems 15 and 19 of the same paper); item (b) follows in the same way using a weak-star version of Stegall variational principle which appeared in \cite[Theorem~2.6]{AAGM2010-variational}, and it is actually implicit in Theorem 21 of \cite{Stegall1986}.

\begin{proposition}[\mbox{Bourgain--Stegall}]\label{propo:BourgainStegall}
Let $X$ and $Y$ be Banach spaces.
\begin{enumerate}
\setlength\itemsep{0.3em}
  \item[(a)] If $X$ has the RNP, then $\ASE(X,Y)$ is residual in $\Lin(X,Y)$. Moreover, given $T\in \Lin(X,Y)$ and $\eps>0$, there is $y\in S_Y$, $x^*\in S_{X^*}$ and $0<\rho<\eps$ such that the operator $S:=T+\rho\, x^*\otimes y$ belongs to $\ASE(X,Y)$.
  \item[(b)] If $Y^*$ has the RNP, then $\ASE(Y^*,X^*)\cap \Linw(Y^*,X^*)$ is residual in $\Linw (Y^*,X^*)$. Moreover, given $T\in \Lin(X,Y)$ and $\eps>0$, there is $y\in S_Y$, $x^*\in S_{X^*}$ and $0<\rho<\eps$ such that the operator $S:=T+\rho\, x^*\otimes y$ satisfies $S^*\in \ASE(Y^*,X^*)$.
\end{enumerate}
\end{proposition}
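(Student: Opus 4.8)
The plan is to establish the two ``Moreover'' clauses: each of them literally asserts that the relevant $G_\delta$ set of absolutely strongly exposing operators is \emph{dense} in the ambient space, and since that ambient space is a Baire space (for (b), note that $\Linw(Y^*,X^*)$ is norm-closed in $\Lin(Y^*,X^*)$ and isometric to $\Lin(X,Y)$, and that $\ASE(Y^*,X^*)\cap\Linw(Y^*,X^*)$ is $G_\delta$ in it), ``dense $+$ $G_\delta$'' immediately gives residuality. One may assume $X,Y\neq\{0\}$ and $T\neq0$; the case $T=0$ is disposed of separately by taking $S=\rho\,x^*\otimes y$ where $x^*$ strongly exposes $B_X$ in case (a) — resp.\ where $y\in S_Y$, viewed as a functional on $Y^*$, strongly exposes $B_{Y^*}$ in case (b) — such $x^*$ (resp.\ $y$) existing by the corresponding Stegall principle applied to the zero function on the RNP set $B_X$ (resp.\ $B_{Y^*}$).

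For (a) I would follow the scheme of \cite[Theorems~15 and~19]{Stegall1986}. Apply Stegall's variational principle \cite[Theorem~14]{Stegall1986} to the bounded, Lipschitz (hence upper semicontinuous) function $x\mapsto\|Tx\|$ on the RNP set $B_X$: the set of $g\in X^*$ for which $x\mapsto\|Tx\|+\re g(x)$ attains a \emph{strong} maximum on $B_X$ is residual, so choose such a $g$ with $0<\|g\|<\min\{\eps,\|T\|/2\}$; let $x_0$ be the strong-maximum point and $M$ the maximum value. Routine estimates give $x_0\in S_X$ and $Tx_0\neq0$ (both from $M\ge\|T\|-\|g\|$), and the $\T$-invariance of $x\mapsto\|Tx\|$ together with $B_X$ balanced yields three things: $\sup_{B_X}(\|Tx\|+|g(x)|)=M$; $g(x_0)\ge0$ (compare $x_0$ with the points $\theta x_0$, $\theta\in\T$); and every $\{x_n\}\subset B_X$ with $\|Tx_n\|+|g(x_n)|\to M$ converges to $x_0$ up to rotations (pick $\theta_n$ realizing $|g(x_n)|$ and use strong maximality). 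The key move is then to set $y:=Tx_0/\|Tx_0\|\in S_Y$ and $S:=T+g\otimes y$: one has $\|Sx\|\le\|Tx\|+|g(x)|\le M$ for $x\in B_X$ while $\|Sx_0\|=\|Tx_0\|+g(x_0)=M$, so $\|S\|=M$ is attained at $x_0$; and $\|Sx_n\|\to\|S\|$ forces $\|Tx_n\|+|g(x_n)|\to M$, hence $\theta_nx_n\to x_0$, i.e.\ $S\in\ASE(X,Y)$. Writing $g=\rho\,x^*$ with $\rho=\|g\|\in(0,\eps)$ and $x^*\in S_{X^*}$ gives the asserted form.

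For (b) I would run exactly the same argument on the domain $Y^*$, with $B_{Y^*}$ in place of $B_X$ and the function $y^*\mapsto\|T^*y^*\|_{X^*}$ in place of $x\mapsto\|Tx\|$. The one point where (b) genuinely departs from (a) — and the step I expect to be the main obstacle — is that classical Stegall does not apply: $y^*\mapsto\|T^*y^*\|$ is only $w^*$-lower semicontinuous on $B_{Y^*}$ (a supremum of $w^*$-continuous functions), and, more importantly, we need the perturbed operator to have a $w^*$-$w^*$-continuous adjoint. Both needs are met by the weak-star version of Stegall's principle \cite[Theorem~2.6]{AAGM2010-variational} (implicit in \cite[Theorem~21]{Stegall1986}): since $Y^*$ has the RNP, it yields a residual set of \emph{predual} perturbations $h\in Y$ for which $y^*\mapsto\|T^*y^*\|+\re\langle y^*,h\rangle$ attains a strong maximum on $B_{Y^*}$, say at $y_0^*$, with (as before) $T^*y_0^*\neq0$ and $\langle y_0^*,h\rangle\ge0$. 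Then, with $x^*:=T^*y_0^*/\|T^*y_0^*\|\in S_{X^*}$, $\rho:=\|h\|$, $y:=h/\|h\|\in S_Y$ and $S:=T+\rho\,x^*\otimes y$, one has $S^*y^*=T^*y^*+\rho\langle y^*,y\rangle x^*$, so $\|S^*y^*\|\le\|T^*y^*\|+|\langle y^*,h\rangle|$ with equality (equal to the maximum value) at $y_0^*$; the same $\T$-invariance argument gives $S^*\in\ASE(Y^*,X^*)$, and $S^*\in\Linw(Y^*,X^*)$ because it is an adjoint.

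So the conceptual weight of the proof rests entirely on the (weak-star) Stegall variational principle, which I would simply quote. Besides that, the only point requiring real care — in both parts — is the passage from ``$x\mapsto\|Tx\|+\re g(x)$ has a strong maximum at $x_0$'' to ``$S=T+g\otimes y$ is \emph{absolutely} strongly exposing'': this is precisely where the rotation invariance of $x\mapsto\|Tx\|$ and the choice $y=Tx_0/\|Tx_0\|$ convert a one-sided strong maximum into a strong maximum up to unimodular multiples, and where the ``$\{\theta_n\}\subset\T$'' clause in the definition of $\ASE$ is indispensable. Everything else is the bookkeeping sketched above.
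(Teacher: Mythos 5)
Your proposal is correct and takes essentially the same approach as the paper: the paper offers no written proof, merely attributing item (a) to Stegall's variational principle (as carried out in Theorems 15 and 19 of \cite{Stegall1986}) and item (b) to its weak-star version \cite[Theorem~2.6]{AAGM2010-variational}, and your reconstruction of the ``routine'' part --- the rotation-invariance argument, the choice $y=Tx_0/\|Tx_0\|$ converting a strong maximum of $x\mapsto\|Tx\|+\re g(x)$ into an absolutely strongly exposing operator, and the use of predual perturbations in (b) to keep $S^*$ in $\Linw(Y^*,X^*)$ --- is exactly the intended one and is sound. No further comment is needed.
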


Next, from the proof of \cite[Proposition 3.14]{cgmr2020}, we may extract the following easy results which we will use all along the paper.

\begin{lemma}[\mbox{\cite{cgmr2020}}] \label{lem:RMI}
Let $X$ and $Y$ be Banach spaces.
\begin{enumerate}
\setlength\itemsep{0.3em}
\item If $T \in \ASE(X,Y)$ with $\|Tx_0\| = \|T\|$ and $y^* \in S_{Y^*}$ satisfies that $\re y^* (Tx_0) = \|T\|$, then $T^* y^* \in \SE(X)$.
\item If $T \in \Lin (X,Y)$ attains its norm at $x_0 \in \strexp{B_X}$, then for any $\eps >0$, there exists $S \in \ASE(X,Y)$ such that $\|Sx_0\| = \|S\|$ and $\|S-T\| < \eps$. Moreover, $S-T$ is of rank one and $Sx_0\in \R^+Tx_0$.
\item If $T\in \Linw (Y^*,X^*)$ attains its norm at $y_0^*\in \wstrexp{B_{Y^*}}$, then for any $\eps>0$, there exists $S\in \ASE(Y^*,X^*)\cap \Linw (Y^*,X^*)$ such that $\|Sy_0^*\|=\|S\|$ and $\|S-T\|<\eps$. Moreover, $S-T$ is of rank one and $Sy_0^*\in \R^+Ty_0^*$.
\item If $T \in \Lin (X,Y)$ satisfies that $\|T\| = \|T^* (y_0^*)\|$ and $T^* (y_0^*) \in \SE(X)$ for some $y_0^* \in B_{Y^*}$, then $T$ attains its norm at a strongly exposed point. Hence, by (2), $T \in \overline{\ASE(X,Y)}$.
\end{enumerate}
\end{lemma}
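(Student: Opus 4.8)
The statement to prove is Lemma~\ref{lem:RMI}, and in particular I focus on item (4), which is the one that requires the most work (items (1)--(3) are essentially bookkeeping around the definitions, and item (2) is explicitly invoked in the conclusion of (4)).

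\medskip

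The plan for item (4) is as follows. Suppose $T\in\Lin(X,Y)$ satisfies $\|T\|=\|T^*(y_0^*)\|$ and $x^*:=T^*(y_0^*)\in\SE(X)$ for some $y_0^*\in B_{Y^*}$. Normalize so that $\|T\|=1$; then $\|x^*\|\leq\|y_0^*\|\cdot\|T\|\leq 1$, but also $\|x^*\|=\|T\|=1$, so $x^*\in S_{X^*}$ and $\|y_0^*\|=1$. Since $x^*$ strongly exposes $B_X$, let $x_0\in S_X$ be the point of $B_X$ it strongly exposes; by definition $\re x^*(x_0)=\sup_{x\in B_X}\re x^*(x)=\|x^*\|=1$. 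The first key step is to check that $T$ attains its norm precisely at $x_0$: we have $\|Tx_0\|\geq \re y_0^*(Tx_0)=\re (T^*y_0^*)(x_0)=\re x^*(x_0)=1=\|T\|$, so $\|Tx_0\|=\|T\|$ and $T$ attains its norm at the strongly exposed point $x_0$. The second step is to feed this into item (2) of the same lemma: since $T$ attains its norm at $x_0\in\strexp{B_X}$, item (2) gives, for every $\eps>0$, an operator $S\in\ASE(X,Y)$ with $\|S-T\|<\eps$, whence $T\in\overline{\ASE(X,Y)}$. This is exactly the asserted conclusion.

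\medskip

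For item (2) itself (needed above), I would argue by an explicit rank-one perturbation. Given $T$ attaining its norm at $x_0\in\strexp{B_X}$ with $\|T\|=1=\|Tx_0\|$, pick $y^*\in S_{Y^*}$ with $y^*(Tx_0)=1$ and set $x^*:=T^*y^*\in S_{X^*}$; note $x^*$ attains its norm at $x_0$ but need not strongly expose it. Choose $x_0^*\in\SE(X)$ strongly exposing $x_0$ (possible since $x_0\in\strexp{B_X}$), normalized with $x_0^*(x_0)=1$, and for small $\rho>0$ consider $S:=T+\rho\,(x_0^*\otimes Tx_0)$, or alternatively $S=T+\rho\,x_0^*\otimes y$ with $y=Tx_0$. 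Then $\|Sx_0\|=\|Tx_0+\rho Tx_0\|=1+\rho$ while for $x\in B_X$ with $x$ away from $x_0$ the quantity $\|Sx\|\leq\|Tx\|+\rho\,\re x_0^*(x)$ is strictly controlled; a standard convexity/strong-exposedness estimate shows $\|S\|=1+\rho=\|Sx_0\|$ and that any norming sequence $\{x_n\}$ for $S$, after rotation, must satisfy $\re x_0^*(x_n)\to 1$ and hence $x_n\to x_0$ by strong exposedness — so $S\in\ASE(X,Y)$. The perturbation has rank one and $Sx_0=(1+\rho)Tx_0\in\R^+Tx_0$, and $\|S-T\|=\rho$ can be taken $<\eps$. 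Item (3) is the verbatim $w^*$-$w^*$ analogue, replacing $\strexp{B_X}$ by $\wstrexp{B_{Y^*}}$ and checking that the rank-one perturbation $x_0^*\otimes y$ stays inside $\Linw(Y^*,X^*)$ when $x_0^*$ comes from the predual; item (1) is an immediate unwinding of the definitions of $\ASE$ and $\SE$ (a norming sequence for $x^*=T^*y^*$ on $B_X$ is a norming sequence for $T$, hence converges after rotation to the absolutely-strongly-exposed point, forcing $x^*\in\SE(X)$ since rotations can be absorbed when testing a scalar functional).

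\medskip

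The only genuinely delicate point is the convexity estimate in item (2) establishing that the rank-one perturbation actually lands in $\ASE(X,Y)$ and not merely in $\NA(X,Y)$: one must rule out norming sequences escaping towards directions where $\|T\cdot\|$ is still close to $1$ but $x_0^*$ is not, and this is where the strict positivity of $\rho$ and the strong (not merely plain) exposedness of $x_0$ by $x_0^*$ are used — quantitatively, if $\|Sx_n\|\to 1+\rho$ then $\|Tx_n\|+\rho\,\re x_0^*(x_n)\to 1+\rho$ with both terms bounded by their suprema $1$ and $\rho$ respectively, forcing $\re x_0^*(x_n)\to 1$ after passing to suitable rotations $\theta_n$, and strong exposedness then yields $\theta_n x_n\to x_0$. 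Everything else is routine manipulation of norms, suprema, and the definitions recalled in Subsection~\ref{subsection:notation} and \ref{subsection:preminiaryASE}; in fact the lemma is credited to the proof of \cite[Proposition 3.14]{cgmr2020}, so for the paper it suffices to point to that argument, but the self-contained version above is what I would write out.
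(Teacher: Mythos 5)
Your proof is correct and takes the route the paper itself relies on: the paper gives no argument for this lemma, merely extracting it from the proof of \cite[Proposition 3.14]{cgmr2020}, and your rank-one perturbation $S=T+\rho\,x_0^*\otimes Tx_0$ with $x_0^*$ strongly exposing $x_0$ (together with the rotation argument forcing $|x_0^*(x_n)|\to 1$ and hence $\theta_n x_n\to x_0$) is exactly that argument, the same perturbation appearing in Proposition~\ref{propo:BourgainStegall}. The only slip is cosmetic: the intermediate bound should read $\|Sx\|\leq\|Tx\|+\rho\,|x_0^*(x)|$ rather than $\rho\,\re x_0^*(x)$, but your final quantitative paragraph already handles this correctly via the rotations $\theta_n$.
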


Some comments on the previous results may be of interest.

\begin{remark}
\begin{enumerate}
\setlength\itemsep{0.3em}
\item
The facts that an operator $T \in \Lin (X,Y)$ attains its norm at $x_0\in \strexp{B_{X}}$ and that $y_0^*\in S_{Y^*}$ satisfies that $|y_0^* (T(x_0))| = \|T\|$ do not imply $T^* (y_0^*) \in \SE(X)$. For instance, take $x_0 = (1,1) \in \ell_\infty^2$ and $x_0^* =(1,0) \in \ell_1^2\equiv (\ell_\infty^2)^*$. Then $T := x_0^* \otimes x_0 \in \Lin (\ell_\infty^2 , \ell_\infty^2)$ attains its norm at $x_0\in \strexp{B_{\ell_\infty^2}}$, $x_0^*(T(x_0))=\|T\|$, but  $T^* (x_0^*) = x_0^*$ is not an exposing functional.
\item
Even if $T^* y_0^* \in \SE(X)$ and $\|T^* y_0^* \| = \|T^*\|$ for some $y_0^* \in S_{Y^*}$, $T$ may be not absolutely strongly exposing. For this, take the identity operator $\id$ on $\ell_2$; then $\id^* y^* \in \SE(\ell_2)$ for every $y^* \in \ell_2^*$, but $\id$ is not in $\ASE(\ell_2,\ell_2)$.
\end{enumerate}
\end{remark}

Related to item (4) of Lemma~\ref{lem:RMI} is the following easy fact which will be used all along the paper.

\begin{fact}\label{fact:containinginNA(X,Y)}
Let $X$, $Y$ be Banach spaces and $T\in \Lin(X,Y)$. Then, $T\in \NA(X,Y)$ if and only if $T^*\in \NA(Y^*,X^*)$ and there is $y^*\in S_{Y^*}$ such that $\|T^*y^*\|=\|T^*\|$ with $T^*y^*\in \NA(X,\K)$. In this case, $T$ attains its norm at the points where $T^*y^*$ does.
\end{fact}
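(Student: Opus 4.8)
The plan is to deduce both implications directly from the Hahn--Banach theorem together with the standard identity $\|T\|=\|T^*\|$; the statement is essentially a matter of unwinding definitions, and the only point that requires a little care is the bookkeeping of unimodular scalars in the complex case, since norm attainment of a functional $f$ refers to $|f(x)|$ while it is convenient to normalise the duality pairing so that it takes the value $\|f\|$.

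For the forward implication, I would start from $x_0\in S_X$ with $\|Tx_0\|=\|T\|$ and use Hahn--Banach to choose $y^*\in S_{Y^*}$ with $y^*(Tx_0)=\|Tx_0\|=\|T\|$. Then $(T^*y^*)(x_0)=y^*(Tx_0)=\|T\|$, whence $\|T^*y^*\|\geq\|T\|=\|T^*\|\geq\|T^*y^*\|$; so $\|T^*y^*\|=\|T^*\|$ and $T^*$ attains its norm at $y^*$, i.e.\ $T^*\in\NA(Y^*,X^*)$. Moreover $(T^*y^*)(x_0)=\|T^*y^*\|$ with $x_0\in S_X$ shows $T^*y^*\in\NA(X,\K)$.

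For the converse, I would take $y^*\in S_{Y^*}$ as in the hypothesis, together with $x_0\in S_X$ with $|(T^*y^*)(x_0)|=\|T^*y^*\|=\|T^*\|=\|T\|$, and replace $x_0$ by $\theta x_0$ for a suitable $\theta\in\T$ so that $(T^*y^*)(x_0)=\|T\|$. Then $\|T\|\geq\|Tx_0\|\geq|y^*(Tx_0)|=|(T^*y^*)(x_0)|=\|T\|$, so $T$ attains its norm at $x_0$, i.e.\ $T\in\NA(X,Y)$. The ``in this case'' clause follows from exactly the same estimate: whenever $x_1\in S_X$ satisfies $|(T^*y^*)(x_1)|=\|T^*y^*\|$, the inequality $\|Tx_1\|\geq|y^*(Tx_1)|=|(T^*y^*)(x_1)|=\|T^*y^*\|=\|T\|$ forces $\|Tx_1\|=\|T\|$.

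There is no genuine obstacle here; the only thing to watch is that in the complex case one must record norm attainment of functionals through moduli and insert the unimodular scalar $\theta$ at the correct moment, and that one uses $\|T\|=\|T^*\|$ freely throughout.
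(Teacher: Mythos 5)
Your proof is correct and complete; the paper states this Fact without proof precisely because the argument is the standard one you give (Hahn--Banach at $Tx_0$ for the forward direction, the chain $\|Tx_1\|\geq|y^*(Tx_1)|=|(T^*y^*)(x_1)|=\|T^*\|=\|T\|$ for the converse and the ``in this case'' clause). The handling of the unimodular scalar in the complex case is the right touch of care, and nothing is missing.
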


A first consequence of Lemma~\ref{lem:RMI} is that the denseness of $\SE(X)$ is necessary to have denseness of absolutely strongly exposing operators.

\begin{proposition}\label{prop:goingdown}
Let $X$ be a Banach space. If $\ASE(X,Y)$ is dense in $\Lin (X,Y)$ for some nontrivial space $Y$, then $\SE(X)$ is dense in $X^*$.
\end{proposition}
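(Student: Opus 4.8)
The plan is to derive the denseness of $\SE(X)$ in $X^*$ directly from the denseness of $\ASE(X,Y)$ together with item (1) of Lemma~\ref{lem:RMI}. Fix a nontrivial $Y$, pick any norm-one $y_0\in S_Y$, and fix a functional $y^*\in S_{Y^*}$ with $y^*(y_0)=1$ (Hahn--Banach). Given an arbitrary $x^*\in X^*$ with $\|x^*\|=1$ and $\eps>0$, consider the norm-one rank-one operator $T_0:=x^*\otimes y_0\in\Lin(X,Y)$, which satisfies $\|T_0\|=1$. By hypothesis there is $S\in\ASE(X,Y)$ with $\|S-T_0\|<\eps$; clearly $\|S\|$ is within $\eps$ of $1$, and $S^*$ is within $\eps$ of $T_0^*=x^*\otimes_{} y^*\mapsto$, more precisely $\|S^*-x^* y^*(\cdot)\|=\|S-T_0\|<\eps$ on the relevant vectors.

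The key step is then to locate a suitable functional on $Y$ detecting the norm of $S$ so that item (1) of Lemma~\ref{lem:RMI} applies. Let $x_0\in B_X$ be the point at which $S$ attains its norm as an absolutely strongly exposing operator, so $\|Sx_0\|=\|S\|$. By Hahn--Banach choose $z^*\in S_{Y^*}$ with $z^*(Sx_0)=\|Sx_0\|=\|S\|$; in particular $\re z^*(Sx_0)=\|S\|$. Lemma~\ref{lem:RMI}(1) now yields that $S^*z^*\in\SE(X)$. It remains to check that $S^*z^*$ is close to $x^*$: we have $\|S^*z^*-T_0^*z^*\|\le\|S^*-T_0^*\|=\|S-T_0\|<\eps$, and $T_0^*z^*=z^*(y_0)\,x^*$. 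The only loose end is that $z^*(y_0)$ need not equal $1$; but $|z^*(Sx_0)-z^*(T_0x_0)|<\eps$ and $T_0x_0=x^*(x_0)y_0$, while $\|S\|=z^*(Sx_0)$ is within $\eps$ of $1$, so $z^*(y_0)x^*(x_0)$ is within $2\eps$ of $1$; since $|x^*(x_0)|\le1$ and $|z^*(y_0)|\le1$, both factors are within $O(\eps)$ of (unimodular scalars times) $1$. Replacing $z^*$ by $\bar\lambda z^*$ for the appropriate $\lambda\in\T$ (this does not affect membership of $S^*z^*$ in $\SE(X)$, since $\SE(X)$ is a cone stable under rotations) we may assume $z^*(y_0)$ is positive and close to $1$, whence $\|S^*z^*-x^*\|\le\|S^*z^*-z^*(y_0)x^*\|+|1-z^*(y_0)|\,\|x^*\|=O(\eps)$.

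Putting this together, $S^*z^*\in\SE(X)$ lies within $O(\eps)$ of $x^*$, and since $x^*\in S_{X^*}$ and $\eps>0$ were arbitrary and $\SE(X)$ is a cone, $\SE(X)$ is dense in $X^*$. I expect the main obstacle to be the bookkeeping around the scalar $z^*(y_0)$ and the rotation by an element of $\T$: one must argue carefully that $z^*(Sx_0)$ being nearly $\|S\|\approx1$ forces $|z^*(y_0)|$ close to $1$ (using that $S$ is a small perturbation of the rank-one operator $x^*\otimes y_0$), and then normalize so that $T_0^*z^*$ is genuinely close to $x^*$ rather than to a scalar multiple of it. Everything else is a routine application of Hahn--Banach and Lemma~\ref{lem:RMI}(1); no renorming or RNP-type input is needed here.
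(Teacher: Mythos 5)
Your proposal is correct and follows essentially the same route as the paper's proof: perturb the rank-one operator $x^*\otimes y_0$ to some $S\in\ASE(X,Y)$, pick $z^*\in S_{Y^*}$ norming $Sx_0$, apply Lemma~\ref{lem:RMI}(1) to get $S^*z^*\in\SE(X)$, and then handle the scalar $z^*(y_0)$ by a rotation in $\T$ exactly as the paper does with its $\theta$. The bookkeeping you flag as the "main obstacle" is precisely the two-term triangle-inequality estimate in the paper, so there is nothing missing.
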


\begin{proof}
Let $x^* \in S_{X^*}$ and $\eps >0$ be given. Fix $y_0 \in S_Y$ and consider $T = x^* \otimes y_0 \in \Lin (X,Y)$. By assumption, there is $S \in \ASE(X,Y)$ such that $\|S\|=1$ and $\|S-T\| < \eps$. Let say $\|S \| = \|S x_0\|$ for some $x_0 \in \strexp{B_{X}}$ and take $y^* \in S_{Y^*}$ so that $y^* (S(x_0)) = 1$ (hence $S^*y^*\in \SE(X)$ by Lemma~\ref{lem:RMI}).
Note that $|y_0^* (S(x_0)-T(x_0))| < \eps$, so $|y^* (y_0)||x^*(x_0)| > 1 -\eps$. In particular, $|y^* (y_0)| > 1-\eps$. Pick $\theta\in \T$ such that $\theta y^*(y_0)=|y^* (y_0)|$. We observe that
\[
\|\theta S^* y^* - x^*\| \leq \|\theta S^* y^* -\theta y^*(y_0) x^*\| + \|\theta y^* (y_0) x^* -x^*\| < 2\eps.
\]
As $S^* y^* \in \SE(X)$, the same happens with $\theta S^* y^*\in \SE(X)$, finishing the proof.
\end{proof}

The next characterization taken from \cite{H1975} relates differentiability points of $\Lin(X,Y)$ with absolutely strongly operators.

\begin{proposition}[\mbox{\cite[Theorem 3.1]{H1975}}]\label{prop:Heinrich}
Let $X$, $Y$ be Banach spaces and $T\in \Lin(X,Y)$. Then, the norm of $\Lin(X,Y)$ is Fr\'{e}chet differentiable at $T$ if and only if $T$ absolutely strongly exposes a point $x_0\in S_X$ and $Tx_0$ is a point of Fr\'{e}chet differentiability of $Y$.
\end{proposition}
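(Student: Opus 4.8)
The plan is to prove both implications through the Šmulyan-type criterion for Fréchet differentiability of a norm: the norm of a Banach space $Z$ is Fréchet differentiable at $z_0 \in S_Z$ if and only if every pair of sequences $\{z_n^*\}, \{w_n^*\}$ in $B_{Z^*}$ with $\re z_n^*(z_0) \to 1$ and $\re w_n^*(z_0) \to 1$ satisfies $\|z_n^* - w_n^*\| \to 0$; equivalently, every sequence $\{z_n^*\}$ in $B_{Z^*}$ with $\re z_n^*(z_0) \to 1$ is norm-Cauchy, converging to the unique support functional at $z_0$. Applying this with $Z = \Lin(X,Y)$ and $Z^* \supseteq \Lin(Y^*, X^{**})$ (or more carefully, identifying functionals on $\Lin(X,Y)$ with elements of the dual, whose action on rank-one operators $x^*\otimes y$ is via a bilinear pairing), I would unwind what a norming sequence of functionals on $\Lin(X,Y)$ looks like near a differentiability point.

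First I would establish the forward direction: assume the norm of $\Lin(X,Y)$ is Fréchet differentiable at $T$ with $\|T\| = 1$. The support functional at $T$ is unique; I would identify it and use it to locate $x_0 \in S_X$. For the "attains norm" part, one tests with functionals of the form $x^* \otimes y^*$ (acting on $S \in \Lin(X,Y)$ as $y^*(Sx)$ after identifying $x^* = $ evaluation at $x$), where $x \in S_X$, $y^* \in S_{Y^*}$: if $\|Tx_n\| \to 1$ with witnessing $y_n^* \in S_{Y^*}$, then the functionals $\delta_{x_n} \otimes y_n^*$ are norming for $T$, so by Šmulyan they converge in norm, forcing (after extracting rotations $\theta_n$) $\theta_n x_n$ to converge — this is precisely the absolutely strongly exposing property, with the limit point $x_0$. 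Then I would show $Tx_0$ is a Fréchet differentiability point of $Y$: a norming sequence $\{y_n^*\}$ for $Tx_0 \in S_Y$ pulls back to a norming sequence $\{\delta_{x_0} \otimes y_n^*\}$ for $T$, hence is norm-Cauchy, hence $Tx_0$ is a smooth point of $Y$ by the converse Šmulyan criterion.

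For the reverse direction, assume $T$ absolutely strongly exposes $x_0 \in S_X$ and $Tx_0$ is a Fréchet smooth point of $Y$; I would verify the Šmulyan criterion at $T$. Take a norming net (or sequence, by separability-free Šmulyan in sequence form using that Fréchet differentiability is a countable phenomenon) $\{\Phi_n\}$ in $B_{\Lin(X,Y)^*}$ with $\re \Phi_n(T) \to 1$. The obstacle here — and the main technical point — is that a general functional on $\Lin(X,Y)$ need not be a rank-one tensor $x^* \otimes y^*$, so I cannot directly read off an element of $X$ and a functional on $Y$. The standard resolution, which I would carry out, is: realize $\Phi_n$ via the bidual structure, or better, use that near $T$ the norm being $1$ and $\re \Phi_n(T) \to 1$ forces $\Phi_n$ to "concentrate" on the behavior of $T$ at $x_0$. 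Concretely, because $T$ absolutely strongly exposes $x_0$, there is $\eta_n \downarrow 0$ with $\{x \in B_X : \|Tx\| > 1 - \eta_n\} \subseteq \T\, B(x_0, \eps_n)$, $\eps_n \downarrow 0$; combining this with $\re\Phi_n(T) \to 1$ one shows each $\Phi_n$ is approximated in norm by a functional supported (in the appropriate tensor sense) at $x_0$, i.e. of the form $\delta_{x_0} \otimes \psi_n$ with $\psi_n \in B_{Y^*}$ and $\re \psi_n(Tx_0) \to 1$. Then Fréchet smoothness of $Y$ at $Tx_0$ gives $\|\psi_n - \psi_m\| \to 0$, hence $\{\Phi_n\}$ is norm-Cauchy, which is exactly Fréchet differentiability of $\|\cdot\|_{\Lin(X,Y)}$ at $T$. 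The delicate step to get right is the approximation $\Phi_n \approx \delta_{x_0} \otimes \psi_n$: one argues that if $\Phi_n$ had nontrivial mass "away from $x_0$", one could perturb $T$ by a small rank-one operator vanishing near $x_0$ to increase $\re\Phi_n$ beyond what $\|T\|+$ (small) allows, contradicting $\Phi_n \in B_{\Lin(X,Y)^*}$ together with $\re\Phi_n(T)$ being near the maximum; here the absolutely-strong-exposing property is exactly what guarantees that moving off $\T B(x_0,\eps)$ strictly decreases $\|T\cdot\|$, so no norming functional can live there. I expect this localization argument to be the main obstacle, and I would model it on the proof that residuality/denseness statements interact with the $G_\delta$ description of $\ASE(X,Y)$ given before the statement.
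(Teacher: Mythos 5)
The paper does not prove this proposition at all --- it is imported verbatim from Heinrich \cite[Theorem 3.1]{H1975} --- so there is no in-paper argument to compare yours with, and I can only judge the proposal on its own terms. Your forward direction is essentially complete: pairing $\Lin(X,Y)$ with the elementary tensors $x\otimes y^*$ (acting by $S\mapsto y^*(Sx)$), whose norm as functionals is $\|x\|\,\|y^*\|$ and whose differences, tested against rank-one operators, control the distance between the $x$-factors up to rotation and between the $y^*$-factors, the sequential \v{S}mulyan test at $T$ does deliver both the absolutely strongly exposing property and the Fr\'{e}chet smoothness of the norm of $Y$ at $Tx_0$.

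The gap is in the reverse direction, exactly at the localization step you flag as delicate. As written, the mechanism fails twice over. First, there is no nonzero operator ``vanishing near $x_0$'': the set $B(x_0,\eps)\cap B_X$ has nonempty interior in $X$, so any operator vanishing on it is zero, and the intended perturbation does not exist. Second, for a general $\Phi\in B_{\Lin(X,Y)^*}$ the phrase ``mass away from $x_0$'' has no meaning, so the contradiction you describe cannot be set up. The step can be repaired while staying close to your idea. Let $\eta_0>0$ be such that $\{x\in B_X\colon \|Tx\|>1-\eta_0\}\subseteq \T B(x_0,\eps)$, and consider the surjection $S\longmapsto Sx_0$ from $\Lin(X,Y)$ onto $Y$. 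If $R$ lies in its kernel with $\|R\|\leq\eta_0$, then $\|T+R\|\leq 1+\eps\|R\|$: on the slice one has $x=\theta(x_0+u)$ with $\|u\|<\eps$, so $\|Rx\|=\|Ru\|\leq\eps\|R\|$, while off the slice $\|Tx\|\leq 1-\eta_0$ absorbs $\|R\|$. Hence $\re\Phi(R)=\re\Phi(T+R)-\re\Phi(T)\leq \eps\|R\|+\delta$ whenever $\re\Phi(T)>1-\delta$, and applying this to rotations of $R$ shows that the restriction of $\Phi$ to the kernel has norm at most $\eps+\delta/\eta_0$. Since evaluation at $x_0$ is onto, the annihilator of its kernel is exactly $\{x_0\otimes\psi\colon \psi\in Y^*\}$, so $\Phi$ lies within $\eps+\delta/\eta_0$ of some $x_0\otimes\psi$ with $\|\psi\|$ close to $1$ and $\re\psi(Tx_0)$ close to $1$; the \v{S}mulyan test for $Y$ at $Tx_0$ then pins $\psi$ near the support functional of $Tx_0$, and the slice of $B_{\Lin(X,Y)^*}$ determined by $T$ and $\delta$ has small diameter once $\eta_0$ and then $\delta$ are chosen appropriately, which is Fr\'{e}chet differentiability at $T$. (An alternative is to write $B_{\Lin(X,Y)^*}$ as the weak-star closed absolutely convex hull of the elementary tensors and invoke the standard quantitative slice lemma.) Without some such argument the reverse implication is asserted rather than proved.
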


Observe that, in particular, the existence of Fr\'{e}chet differentiability points of the norm of $\Lin(X,Y)$ implies the existence of Fr\'{e}chet differentiability points of the norm of $X^*$ and of the norm of $Y$.

\subsection{Some consequences of the residuality of norm attaining operators}\label{subsect:residuality-easy-consequences}

Our aim in this subsection is to show some implications of the residuality of the set of norm attaining operators. The next result contains the particularization to the case of operators of some folklore results on residual sets on Banach spaces.

\begin{proposition}\label{prop:affine}
Let $X$ and $Y$ be Banach spaces and suppose that $\NA (X,Y)$ is residual.
\begin{enumerate}
\setlength\itemsep{0.3em}
\item[(a)] Given $S \in \Lin (X,Y)$, the set $\mathcal A (S):= \{T \in \Lin (X,Y)\colon S +T \in \NA (X,Y)\}$ is residual.
\item[(b)] Given a sequence $\{S_n\}$ in $\Lin (X,Y)$ and $\eps >0$, there exists $T \in \Lin (X,Y)$ with $\|T\| < \eps$ such that $T+S_n \in \NA(X,Y)$ for every $n \in \N$.
\item[(c)] $\Lin(X,Y)=\NA(X,Y)-\NA(X,Y)$.
\end{enumerate}
\end{proposition}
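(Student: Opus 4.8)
The plan is to derive all three items from the residuality hypothesis together with the Baire category theorem in the Banach space $\Lin(X,Y)$, which is complete.

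For item (a), I would observe that the map $\Phi_S\colon \Lin(X,Y)\to\Lin(X,Y)$ given by $\Phi_S(T)=S+T$ is an (affine) homeomorphism, being a translation. Since homeomorphisms preserve the property of being a dense $G_\delta$ set (equivalently, residual), and since $\mathcal A(S)=\Phi_S^{-1}(\NA(X,Y))$, the residuality of $\NA(X,Y)$ transfers directly to $\mathcal A(S)$. The only small point to check is that $\NA(X,Y)$ being residual means it contains a dense $G_\delta$, and the preimage of a dense $G_\delta$ under a homeomorphism is again a dense $G_\delta$; this is routine.

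For item (b), I would use item (a) applied to each $-S_n$: the set $\mathcal A(-S_n)=\{T\colon T+S_n\in\NA(X,Y)\}$ is residual for every $n$, hence contains a dense $G_\delta$ set $G_n$. Then $\bigcap_{n\in\N} G_n$ is still a dense $G_\delta$ by the Baire category theorem (here completeness of $\Lin(X,Y)$ is essential). In particular this countable intersection meets the open ball $B(0,\eps)$, so there is $T$ with $\|T\|<\eps$ lying in every $\mathcal A(-S_n)$, i.e.\ $T+S_n\in\NA(X,Y)$ for all $n$. That is exactly the claim.

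For item (c), I would fix an arbitrary $S\in\Lin(X,Y)$ and show it is a difference of two norm attaining operators. Apply item (a) with the operator $-S$: the set $\mathcal A(-S)=\{T\colon T-S\in\NA(X,Y)\}$ is residual, hence dense; and $\NA(X,Y)$ itself is residual, hence dense. Two dense $G_\delta$ sets in the complete space $\Lin(X,Y)$ have nonempty (in fact dense) intersection by Baire, so there exists $T\in\NA(X,Y)$ with $T-S\in\NA(X,Y)$ as well; writing $R:=T-S\in\NA(X,Y)$ gives $S=T-R$ with both $T,R\in\NA(X,Y)$. Since $S$ was arbitrary, $\Lin(X,Y)=\NA(X,Y)-\NA(X,Y)$.

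None of the steps presents a genuine obstacle; the content is entirely the standard ``residual sets in a Baire space are stable under countable intersections and homeomorphic images'' package, and the mild care needed is simply to phrase ``residual'' consistently (containing a dense $G_\delta$) and to invoke completeness of $\Lin(X,Y)$ when intersecting countably many of them. If anything requires a word of justification it is that translations on $\Lin(X,Y)$ are homeomorphisms, which is immediate.
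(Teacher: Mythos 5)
Your argument is correct and follows essentially the same route as the paper: translations are homeomorphisms preserving residuality for (a), Baire's theorem for the countable intersection in (b), and the intersection of two residual sets for (c). The only blemish is a harmless sign slip in (b): the set $\{T\colon T+S_n\in\NA(X,Y)\}$ is $\mathcal A(S_n)$ as defined, not $\mathcal A(-S_n)$, but either set is residual by (a), so nothing is affected.
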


\begin{proof}
(a): It is an immediate consequence of the fact that homeomorphisms conserve $G_\delta$-dense sets applied to the affine map
$\Phi_S\colon\Lin (X,Y) \longrightarrow \Lin (X,Y)$ given by $\Phi_S (T) = T-S$ for every $T \in \Lin (X,Y)$, as $\Phi_S(\NA(X,Y))=\mathcal{A}(S)$.

(b): By (a), the set $\mathcal{A}(S_n)$ is residual for each $n \in \N$; hence $\bigcap_n \mathcal{A}(S_n)$ is residual. In particular, there exists $T \in \Lin (X,Y)$ with $\|T \| < \eps$ such that $T \in \bigcap_n \mathcal{A}(S_n)$, meaning that $T+S_n\in \NA(X,Y)$ for every $n\in \N$.

(c): For a given $S\in \Lin(X,Y)$, $\mathcal{A}(S)\cap \NA(X,Y)$ is non-empty, so there is $T\in \NA(X,Y)$ such that $S+T\in \NA(X,Y)$.
\end{proof}

Observe that, in the proof given for Proposition~\ref{prop:affine}, we do not use any special property of $\NA(X,Y)$ more than its residuality, so it can be also written in terms of $\ASE(X,Y)$.

\begin{proposition}\label{prop_affineASE}
Let $X$ and $Y$ be Banach spaces and suppose that $\ASE(X,Y)$ is dense.
\begin{enumerate}
\setlength\itemsep{0.3em}
\item[(a)] Given $S \in \Lin (X,Y)$, the set $\mathcal A (S):= \{T \in \Lin (X,Y)\colon S +T \in \ASE(X,Y)\}$ is residual.
\item[(b)] Given a sequence $\{S_n\}$ in $\Lin (X,Y)$ and $\eps >0$, there exists $T \in \ASE(X,Y)$ with $\|T\| < \eps$ such that $T+S_n \in \ASE(X,Y)$ for every $n \in \N$.
\item[(c)] $\Lin(X,Y)=\ASE(X,Y)-\ASE(X,Y)$.
\end{enumerate}
\end{proposition}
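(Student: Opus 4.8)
The plan is to observe that the argument is word-for-word the one used for Proposition~\ref{prop:affine}, once one knows that \emph{density} of $\ASE(X,Y)$ already forces \emph{residuality}. Recall from Subsection~\ref{subsection:preminiaryASE} that $\ASE(X,Y)=\bigcap_{n}\mathcal{A}_{r_n}$ is a $G_\delta$ subset of $\Lin(X,Y)$; hence, if it is dense, it is a dense $G_\delta$, and since $\Lin(X,Y)$ is a Banach space (in particular a Baire space) this means $\ASE(X,Y)$ is residual and, moreover, every residual subset of $\Lin(X,Y)$ is dense and countable intersections of residual sets are residual. So the whole statement reduces to the abstract fact that homeomorphisms preserve residual sets, exactly as in Proposition~\ref{prop:affine}.

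Concretely, first I would prove (a): the affine map $\Phi_S\colon\Lin(X,Y)\to\Lin(X,Y)$, $\Phi_S(T)=T-S$, is a homeomorphism with $\Phi_S(\ASE(X,Y))=\mathcal{A}(S)$, so $\mathcal{A}(S)$ is residual because $\ASE(X,Y)$ is. For (b), given $\{S_n\}$ and $\eps>0$, the set $\mathcal{R}:=\ASE(X,Y)\cap\bigcap_{n}\mathcal{A}(S_n)$ is a countable intersection of residual sets, hence residual, hence dense in $\Lin(X,Y)$; therefore $\mathcal{R}\cap B(0,\eps)\neq\emptyset$, and any $T$ in this intersection satisfies $\|T\|<\eps$, $T\in\ASE(X,Y)$, and $T+S_n\in\ASE(X,Y)$ for all $n$. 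For (c), given $S\in\Lin(X,Y)$, both $\ASE(X,Y)$ and $\mathcal{A}(S)$ are residual, so $\ASE(X,Y)\cap\mathcal{A}(S)$ is residual and in particular nonempty; choosing $T$ in it gives $T\in\ASE(X,Y)$ and $S+T\in\ASE(X,Y)$, whence $S=(S+T)-T\in\ASE(X,Y)-\ASE(X,Y)$.

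There is essentially no obstacle here: the only points to be slightly careful about are (i) upgrading density to residuality via the $G_\delta$ property noted before the proposition, (ii) invoking that $\Lin(X,Y)$ is Baire so that residual sets are dense, and (iii) in part (b), remembering to include $\ASE(X,Y)$ itself as one of the countably many residual sets being intersected, so that the chosen $T$ lies in $\ASE(X,Y)$ and not merely translates the $S_n$ into it. If anything needs a remark, it is only that — as the authors already point out after Proposition~\ref{prop:affine} — nothing about $\ASE(X,Y)$ beyond its residuality is used, so the proof of Proposition~\ref{prop:affine} transfers verbatim with $\NA(X,Y)$ replaced by $\ASE(X,Y)$.
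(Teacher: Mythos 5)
Your proposal is correct and is precisely the argument the paper intends: the authors state Proposition~\ref{prop_affineASE} without a separate proof, noting only that the proof of Proposition~\ref{prop:affine} uses nothing beyond residuality, which for $\ASE(X,Y)$ follows from density together with the $G_\delta$ property recorded in Subsection~\ref{subsection:preminiaryASE}. Your three steps (translation homeomorphism, countable intersection of residual sets including $\ASE(X,Y)$ itself, and nonemptiness of $\ASE(X,Y)\cap\mathcal A(S)$) match the paper's argument exactly.
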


It is easy to give examples showing that the residuality of $\NA(X,Y)$ is necessary in Proposition~\ref{prop:affine} (or the denseness of $\ASE(X,Y)$ in Proposition~\ref{prop_affineASE}).

\begin{example}
Let $X=c_0$. Then, $\NA(X,\K)= \ell_1 \cap c_{00} \subseteq \ell_1$, so it is not residual. Besides, $\ASE(X,\K)=\SE(X)=\{0\}$ since the norm of $\ell_1$ is nowhere Fr\'{e}chet differentiable. Moreover:
\begin{itemize}
\setlength\itemsep{0.3em}
  \item $\NA(X,\K)-\NA(X,\K)= \ell_1 \cap c_{00}\neq  \ell_1 $; $\ASE(X,\K)-\ASE(X,\K)=\{0\}$.
  \item Given $x_1^* =0$ and $x_2^* \in \ell_1 \setminus c_{00}$, there is no $x^* \in \NA(X,\K)$ such that $x_1^* + x^* \in \NA(X,\K)$ and $x_2^*+x^*\in \NA(X,\K)$.
\end{itemize}
\end{example}

\section{Necessary conditions for the Bishop-Phelps property and for property A}\label{sec:BPandpropertyA}
Our main result here is the following.

\begin{theorem}\label{thm:LURrenorming}
Let $X$ be a Banach space and let $C$ be a bounded subset of $X$ with the Bishop-Phelps property.
\begin{itemize}
\setlength\itemsep{0.3em}
  \item[(a)] If $X$ admits an equivalent LUR renorming, then $\SE(C)$ is dense in $X^*$. In particular, $C$ is contained in the closed convex hull of its strongly exposed points.
  \item[(b)] If $X$ admits an equivalent strictly convex norm, then the set of exposing functionals of $C$ is dense in $X^*$. In particular, $C$ is contained in the closed convex hull of its exposed points.
\end{itemize}
\end{theorem}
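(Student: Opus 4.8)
The plan is to prove (a) first and then obtain (b) by an essentially identical but simpler argument. Fix the equivalent LUR renorming $|\cdot|$ on $X$ (we may assume this is the norm we work with, since renorming does not affect the Bishop-Phelps property, which is an isomorphic notion, nor the topology of $X^*$). Let $x^*\in S_{X^*}$ and $\eps>0$ be given; the goal is to produce a strongly exposing functional of $C$ within $\eps$ of $x^*$. The idea is to exploit the Bishop-Phelps property by choosing a clever range space $Y$ that ``remembers'' how the supremum $\sup_{x\in C}\re x^*(x)$ is approximately attained. A natural candidate, following Bourgain and Lindenstrauss, is to take $Y=X$ itself (or a one-dimensional modification) with the LUR norm, and to perturb the rank-one operator $x^*\otimes x_0$ for a suitable $x_0$; but the sharpest route is to use the LUR renorming more directly. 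Concretely, I would consider operators of the form $T_\lambda = \lambda\,\id_X + $ (small perturbation related to $x^*$), or better, work with the scalar-plus-identity trick: let $Y=X$ with its LUR norm and consider $T\in\Lin(X,X)$ approximating $\id_X$ composed with a near-extremal structure for $x^*$.

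The key steps, in order, would be: (1) Reduce to showing that the set $\SE(C)$ is dense by a Baire-type / perturbation argument—it suffices to approximate a fixed $x^*\in S_{X^*}$. (2) Construct, for the given $x^*$ and $\eps$, an operator $T\in\Lin(X,Y)$ for an appropriately chosen LUR space $Y$ such that $T$ is close to a rank-one operator ``$x^*\otimes(\text{something})$'' and such that ANY operator $S$ close to $T$ that attains $\sup_{x\in C}\|Sx\|$ must do so at a point where the functional $S^*y^*$ (for $y^*$ norming $Sx_0$) is close to $x^*$. (3) Apply the Bishop-Phelps property of $C$ to get such an $S$ attaining the supremum over $C$; extract the norming functional $y^*\in S_{Y^*}$ at the attaining point $x_0$. (4) Use the LUR property of $Y$ to upgrade ``$\|Sx_n\|\to\sup$'' to norm convergence of $\{x_n\}$ (this is exactly where LUR of the range, via the Šmulyan-type argument on $Y$, forces strong exposition rather than mere exposition): show $S^*y^*\in\SE(C)$. (5) Estimate $\|S^*y^* - x^*\|$ (up to a scalar and a rotation) to be $O(\eps)$, using that $S$ was close to the rank-one model operator. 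Then $\SE(C)\cap B(x^*,\eps)\neq\emptyset$, proving density; the ``in particular'' follows from a standard separation argument (if $C$ were not in $\cco(\strexp{C})$, a functional would separate, but functionals near it lie in $\SE(C)$, contradiction).

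For part (b), the same scheme works with ``LUR'' weakened to ``strictly convex'': replace step (4) with the observation that a strictly convex norm on $Y$ forces the supremum of $\|Sx\|$ over $C$, when attained, to be attained via a functional that \emph{exposes} (rather than strongly exposes) the attaining point—strict convexity gives uniqueness of the maximizer modulo rotation but not the convergence of maximizing sequences. Everything else (the choice of model operator, the Bishop-Phelps application, the $O(\eps)$ estimate, and the separation argument for the ``in particular'') is unchanged.

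The main obstacle I anticipate is step (2)–(4): designing the model operator and the range space so that proximity to $T$ genuinely \emph{transmits} to proximity of the induced functional $S^*y^*$ to $x^*$, while simultaneously the LUR geometry of $Y$ is available at the attaining point. One delicate point is that $S$ attains $\sup_{x\in C}\|Sx\|$ but we need information at a point of $C$, and we must control the norming functional $y^*\in S_{Y^*}$—in an LUR space the duality map is single-valued and norm-to-norm continuous on the set of smooth points, which is all of $S_{Y^*}$'s predual sphere, and this is precisely the leverage that converts ``maximum attained'' into ``strongly exposed.'' A second subtlety is handling the scalar and $\T$-rotation ambiguities consistently throughout the estimate in step (5), and ensuring the perturbation keeps us in the regime where $C$'s Bishop-Phelps property applies (it is stated for $C$ itself as a bounded set, which is fine since $C$ need not be convex or balanced in part (a)). I would expect the bookkeeping here to be the bulk of the technical work, with the conceptual heart being the LUR-range trick inherited from Bourgain's and Lau's treatments of the weakly compact case.
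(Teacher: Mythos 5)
Your overall architecture matches the paper's: choose an LUR range space, approximate a model operator built from $x^*$ by one attaining the supremum on $C$ (via the Bishop--Phelps property), extract the norming functional at the attaining point, and use the LUR geometry to upgrade attainment to strong exposure. However, there is a genuine gap at the heart of your steps (2)--(4): you never explain how norm convergence \emph{in $Y$} of a maximizing sequence $\{Sx_n\}$ (which is all that the LUR property of $Y$ at the point $Sx_0$ can give) is converted into norm convergence of $\{x_n\}$ \emph{in $X$}, which is what strong exposure of $C$ requires. The mechanism in the paper is that the model operators and their Bishop--Phelps approximants are \emph{monomorphisms}, i.e.\ bounded below: the paper takes $Y=(X,\nn{\cdot})\oplus_2\K$ with $\nn{\cdot}$ an LUR norm dominated by $\|\cdot\|$, sets $T_nx=(n^{-1}x,x^*(x))$, and isolates as a separate lemma the statement that if a monomorphism $T$ attains $\sup_{x\in C}\|Tx\|$ at $x_0$ with $Tx_0$ an LUR point of $Y$, then $T^*y^*$ strongly exposes $C$ at $x_0$ for any norming $y^*$. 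Two ingredients you omit are indispensable here: (i) the bounded-below property of $S$ (your appeal to single-valuedness and norm-to-norm continuity of the duality map in LUR spaces is not the relevant fact and does not substitute for it; even in part (b), strict convexity of $Y$ without injectivity of $S$ only identifies $Sx_0$, not $x_0$, as the unique maximizer); and (ii) the classical fact that the set of monomorphisms is \emph{open} in $\Lin(X,Y)$, which is what guarantees that the supremum-attaining operators produced by the Bishop--Phelps property, being close to the monomorphisms $T_n$, are still monomorphisms.

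A secondary issue: your candidate model operator $T_\lambda=\lambda\,\id_X+(\text{small perturbation related to }x^*)$ has the sizes reversed. For the final estimate $\|S^*y^*-x^*\|=O(\eps)$ to work, the $x^*$-component must be dominant and the identity-like component the vanishing one (as in $T_nx=(n^{-1}x,x^*(x))$, which converges to $x\mapsto(0,x^*(x))$); making the identity dominant would pull $S^*y^*$ towards the norming functional of an isomorphism rather than towards $x^*$. Finally, the scalar and rotation bookkeeping you flag in step (5) is genuinely nontrivial and is handled in the paper by first translating $C$ so that $x^*(C)$ lies in the sector $\bigl\{r\e^{i\theta}\colon r\geq 1,\ |\theta|\|x^*\|\leq\eps/2\bigr\}$, which forces the limiting scalar $\lambda_0$ of the second dual coordinates of the norming functionals to be nonzero and controls its argument; some such normalization must be built in from the start rather than patched at the end.
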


We need a preliminary lemma to prove the theorem. Recall that a \emph{monomorphism} between two Banach spaces $X$ and $Y$ is an operator $T\in \Lin(X,Y)$ which is an isomorphism from $X$ onto $T(X)$. It is well known that $T\in \Lin(X,Y)$ is a monomorphism if and only if there is $C>0$ such that $\|Tx\|\geq C\|x\|$ for all $x\in X$, and if and only if $\ker T=\{0\}$ and $T(X)$ is closed (see \cite[\S~10.2.3]{Kadets_book}, for instance). It is also a classical result that the set of monomorphisms between Banach spaces is open (see \cite[Lemma~2.4]{Abramovich-Aliprantis}, for instance).

\begin{lemma}\label{lemma:C-monomorphismattainingsupremum}
Let $X$, $Y$ be Banach spaces, $T\in \Lin(X,Y)$ be a monomorphism, and let $C\subset X$ be bounded.
\begin{itemize}
\setlength\itemsep{0.3em}
  \item[(1)] If $x_0\in C$ satisfies that $\|T x_0\|=\sup\{\|Tx\|\colon x\in C\}$ and that $Tx_0$ is an LUR point of $Y$, then $x_0$ is a strongly exposed point of $C$. Moreover, $x_0$ is strongly exposed by $T^*y^*$ for every $y^*\in S_{Y^*}$ such that $\re y^*(Tx_0)=\sup\{\|Tx\|\colon x\in C\}$.
  \item[(2)] If $x_0\in C$ satisfies that $\|T x_0\|=\sup\{\|Tx\|\colon x\in C\}$ and that $Tx_0$ is a rotund point of $Y$, then $x_0$ is a exposed point of $C$. Moreover, $x_0$ is exposed by $T^*y^*$ for every $y^*\in S_{Y^*}$ such that $\re y^*(Tx_0)=\sup\{\|Tx\|\colon x\in C\}$.
\end{itemize}
\end{lemma}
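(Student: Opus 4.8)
The plan is to prove the two items in parallel, since they differ only in replacing "LUR point / strongly exposed" by "rotund point / exposed" throughout. Fix $M:=\sup\{\|Tx\|\colon x\in C\}$, let $x_0\in C$ with $\|Tx_0\|=M$, and by Hahn--Banach pick $y^*\in S_{Y^*}$ with $\re y^*(Tx_0)=\|Tx_0\|=M$. Set $x^*:=T^*y^*\in X^*$. First I would check that $x^*$ supports $C$ at $x_0$ with value $M$: for every $x\in C$, $\re x^*(x)=\re y^*(Tx)\leq\|Tx\|\leq M=\re x^*(x_0)$. This also shows that any $x\in C$ with $\re x^*(x)=M$ must satisfy $\|Tx\|=M$ and $\re y^*(Tx)=M=\|Tx\|$, i.e.\ $y^*$ norms $Tx$.

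For part (1), I would take a sequence $\{x_n\}\subset C$ with $\re x^*(x_n)\to M$ and show $\|x_n-x_0\|\to 0$. The key computation is on the images: since $\re x^*(x_n)=\re y^*(Tx_n)\leq\|Tx_n\|\leq M$, we get $\|Tx_n\|\to M=\|Tx_0\|$, so $\|Tx_n\|\leq\|Tx_0\|+o(1)$; moreover $\|Tx_n+Tx_0\|\geq\re y^*(Tx_n+Tx_0)=\re y^*(Tx_n)+M\to 2M=2\|Tx_0\|$. After a harmless normalization (replacing $x_n$ by $\tfrac{\|Tx_0\|}{\|Tx_n\|}x_n$, or simply invoking the standard fact that the LUR condition is insensitive to the $\|x_n\|\le\|x_0\|$ hypothesis when $\|x_n\|\to\|x_0\|$), the LUR property of the point $Tx_0$ forces $\|Tx_n-Tx_0\|\to 0$. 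Now I would use that $T$ is a monomorphism: there is $c>0$ with $\|Tx\|\geq c\|x\|$ for all $x$, hence $c\|x_n-x_0\|\leq\|Tx_n-Tx_0\|\to 0$, giving $x_n\to x_0$. Thus $x^*=T^*y^*$ strongly exposes $x_0$ in $C$, and since $y^*$ was an arbitrary norming functional of $Tx_0$ with value $M$, the "moreover" clause follows.

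For part (2), the argument is the finite/qualitative version of the above: suppose $x\in C$ satisfies $\re x^*(x)=M=\re x^*(x_0)$. As noted, this yields $\|Tx\|=M=\|Tx_0\|$ and $\re y^*(Tx+Tx_0)=2M$, so $\|Tx+Tx_0\|=2M=2\|Tx_0\|$ while $\|Tx\|\leq\|Tx_0\|$. The rotundity of the point $Tx_0$ then gives $Tx=Tx_0$, and injectivity of $T$ (part of being a monomorphism) gives $x=x_0$. Hence $x^*=T^*y^*$ exposes $x_0$ in $C$, again for every valid choice of $y^*$.

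I do not expect a genuine obstacle here; this lemma is essentially a transfer of (local uniform) rotundity through a monomorphism, and the only point requiring a little care is the normalization issue in (1) — reconciling the inequality $\re y^*(Tx_n)\leq\|Tx_n\|$ (which does not immediately give $\|Tx_n\|\leq\|Tx_0\|$) with the precise form of the LUR-point definition stated in the preliminaries. This is handled by rescaling $x_n$ by a factor tending to $1$, which affects neither the limit $\re x^*(x_n)\to M$ nor the conclusion $x_n\to x_0$; so the whole proof is a short chain of elementary estimates. Boundedness of $C$ is used only implicitly to ensure $M<\infty$.
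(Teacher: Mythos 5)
Your argument is correct and follows essentially the same route as the paper: pass to $x^*=T^*y^*$, note it supports $C$ at $x_0$ with value $M$, transfer a maximizing sequence (resp.\ a maximizing point) to the images, apply the LUR (resp.\ rotund) property of $Tx_0$, and pull back via the lower bound $\|Tx\|\geq c\|x\|$ of the monomorphism. The only remark is that your closing worry about normalization is unnecessary: since $x_n\in C$ and $\|Tx_0\|=\sup\{\|Tx\|\colon x\in C\}$, the inequality $\|Tx_n\|\leq\|Tx_0\|$ holds exactly (as your own chain $\re y^*(Tx_n)\leq\|Tx_n\|\leq M$ already records), so the LUR-point definition applies verbatim without any rescaling.
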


\begin{proof}
The proof of both assertions is almost the same, so we only provide the one of (1), the one in which we are more interested. Take $y^*\in S_{Y^*}$ such that
$$
\re y^*(Tx_0)=\|Tx_0\|=\sup\{\|Tx\|\colon x\in C\}.
$$
First, observe that
\begin{align*}
 \re T^*y^*(x_0) &=\re y^*(Tx_0)=\|T x_0\|=\sup\{\|Tx\|\colon x\in C\}\\ &\geq \sup\{\re y^*(Tx)\colon x\in C\}= \sup\{\re T^*y^*(x)\colon x\in C\}.
\end{align*}
Moreover, if a sequence $\{x_n\}\subset C$ satisfies that
$$
\lim\limits_n\re T^* y^*(x_n)=\sup\{\re T^*y^*(x)\colon x\in C\}=\|Tx_0\|,
$$
we have that $\|Tx_n\|\leq \|Tx_0\|$ and
$$
\lim\limits_n\|Tx_n + Tx_0\|\geq \lim\limits_n \re T^*y^*(x_n+x_0) =2\|Tx_0\|.
$$
As $Tx_0$ is an LUR point, this implies that $\|Tx_n-T x_0\|\longrightarrow 0$. But now $T$ is a monomorphism, so it is bounded from below, which implies that $\lim\limits_n x_n =x_0$. In other words, $T^*y^*$ strongly exposes $C$ at $x_0$ and, in particular, $x_0$ is a strongly exposed point of $C$.
\end{proof}

We are ready to present the pending proof.

\begin{proof}[Proof of Theorem \ref{thm:LURrenorming}]
We only include the arguments to get item (a). The proof of item (b) follows the same lines using item (2) of Lemma~\ref{lemma:C-monomorphismattainingsupremum} instead of item (1).

We write $\| \cdot \|$ for the given norm of $X$. Consider a norm $\nn{ \cdot}$ on $X$ which is LUR and satisfies $\nn{x} \leq \|x\|$ for every $x \in X$. Define $Y:= (X, \nn{\cdot}) \oplus_2 \mathbb{K}$ and note that $Y$ is LUR. Pick $x^* \in X^*\setminus \{0\}$ and $\eps>0$. As $\SE(C)=\SE(C + x_0)$ for every $x_0\in X$, without loss of generality, we may assume that
\begin{equation}\label{eq:proofSEC-x*positive}
x^*(C)\subset \bigl\{r\e^{i\theta}\colon r\geq 1,\, |\theta|\|x^*\|\leq\eps/2 \bigr\}
\end{equation}
(in the real case this is just $x^*(C)\subset [1,+\infty[$).
For each $n \in \N$, define $T_n \in \Lin (X,Y)$ by $T_n x = (n^{-1} x, x^* (x))$ for every $x\in X$. Observe that each $T_n$ is a monomorphism.

Define $S \in \Lin (X,Y)$ by $S x = (0, x^* (x))$ for every $x \in X$ and observe that $\|T_n - S \| \longrightarrow 0$. Since the set of monomorphisms from $X$ to $Y$ is open and $C$ has the Bishop-Phelps property, we may find a sequence $\{S_n\}$ of monomorphisms from $X$ to $Y$ which attain the supremum of their norms on $C$ and $\lim\limits_n\|T_n-S_n\|=0$. Therefore, $\|S_n - S \| \longrightarrow 0$. As every $S_n$ is a monomorphism attaining the supremum of its norms on $C$ and $Y$ is LUR, item (1) of Lemma~\ref{lemma:C-monomorphismattainingsupremum} provides a sequence $\{x_n\}$ of points of $C$ and a sequence $\{y_n^*\}$ of elements of $S_{Y^*}$ such that each $S_n^*y_n^*$ belongs to $\SE(C)$ and strongly exposes $C$ at $x_n$. We write $y_n^* = (z_n^*, \lambda_n)\in Y^* = X^*\oplus_2 \mathbb{K}$ and, passing to a subsequence, assume that $\lambda_n \longrightarrow \lambda_0$ for some $\lambda_0 \in \mathbb{K}$. Since $\|S^* y_n^* - S_n^* y_n^*\| \longrightarrow 0$ and $S^*y_n^*=\lambda_n x^*$ for every $n\in \N$, we have that
\begin{equation}\label{eq:proofSEC-final}
\| \lambda_0 x^* - S_n^* y_n^*\| \longrightarrow 0.
\end{equation}
Now, we set
$$
\alpha_n:=y_n^*(S_n(x_n))=\sup\{\|S_n(x)\|\colon x\in C\}
$$
and observe that
$$
|\lambda_n x^*(x_n)-\alpha_n| = \bigl|y_n^*(Sx_n)-y_n^*(S_n x_n)\bigr|\leq
\|S - S_n\|\sup_n\|x_n\|\longrightarrow 0.
$$
Passing to a subsequence, we may suppose that $\alpha:=\lim\limits_n \alpha_n$ and $\beta:=\lim\limits_n x^*(x_n)$ exist, and we obtain from \eqref{eq:proofSEC-final} that $\lambda_0 \beta = \alpha$.

Notice from \eqref{eq:proofSEC-x*positive} that $\sup\{\|S(x)\|\colon x\in C\} = \sup \{ |x^* (x)|\colon x \in C \} \geq 1$; hence we get that $\alpha \geq 1$ since $\|S_n - S \| \longrightarrow 0$. In particular, $\lambda_0\neq 0$. Besides, using again \eqref{eq:proofSEC-x*positive}, we may write $\beta:=r\e^{i \theta}$ with $r\geq 1$ and $|\theta|\|x^*\|\leq\eps/2$. Now,
$$
\lambda_0=\alpha \beta^{-1}=|\lambda_0|\e^{-i\theta},
$$
and so
\begin{align*}
\left\|x^*-\frac{\lambda_0}{|\lambda_0|} x^* \right\| &=\bigl|1-\e^{-i\theta}\bigr|\|x^*\|= 2\bigl|\sin(\theta/2)\bigr|\|x^*\|\leq \eps/2.
\end{align*}
From \eqref{eq:proofSEC-final}, and since $\lambda_0\neq 0$, we have that			
\begin{equation*}
\Bigl\|\lambda_0|\lambda_0|^{-1} x^* - |\lambda_0|^{-1} S_n^* y_n^*\Bigr\| \longrightarrow 0,
\end{equation*}
so we may find $n\in \N$ such that $\bigl\|x^* - |\lambda_0|^{-1} S_n^* y_n^*\bigr\|<\eps$.
The arbitrariness of $\eps>0$ and the fact that $\lambda\SE(C)=\SE(C)$ for every $\lambda\in \R^+$ finish the proof.
\end{proof}

Some remarks on the previous result are pertinent.

\begin{remark}
If the set $C$ in Theorem~\ref{thm:LURrenorming} is balanced, then the proof slightly simplifies. Indeed, in this case we have that $\lambda\SE(C)=\SE(C)$ for every $\lambda\in \K\setminus\{0\}$ and so we only need to prove that $\lambda_0\neq 0$, a easier fact to show.
\end{remark}

\begin{remark}
Observe that we do not need convexity nor closedness of the set $C$ in Theorem~\ref{thm:LURrenorming}.
      \begin{enumerate}
        \item With respect to convexity, this is not very important as the set of strongly exposing functionals of a set and the one of its convex hull coincide and, on the other hand, a set has the Bishop-Phelps property if and only if it convex hull does.
        \item With respect to closedness, the situation is different. On the one hand, $\SE(C)$ and $\SE(\overline{C})$ may be completely different, and it is not true that $C$ has the Bishop-Phelps property whenever $\overline{C}$ does (while the other implication is clear).
        \item Let us also comment here that the Bishop-Phelps property of $C$ does not imply $C$ to be closed: just consider a square in the plane for which we have removed the sides but not the vertices.
      \end{enumerate}
\end{remark}

\begin{remark} Theorem~\ref{thm:LURrenorming} improves results of Lindenstrauss \cite[Theorem 2]{Lin}, where $C$ is the unit ball and only the fact that $B_X$ is the closed convex hull of the strongly exposed points is obtained. Besides, the fact that $\SE(C)$ is dense in $X^*$ was previously known for weakly compact convex sets (Bourgain \cite{Bou2} and Lau \cite{Lau}) and for bounded closed convex sets with the RNP (Bourgain \cite{Bou}).
\end{remark}

Applying Theorem~\ref{thm:LURrenorming} to the unit ball of a Banach space, we get the following improvement of the necessary conditions given by Lindenstrauss in \cite[Theorem 2]{Lin}.

\begin{corollary}\label{corollary:PropertyAimpliesSEdense}
Let $X$ be a Banach space with property A.
\begin{enumerate}
\setlength\itemsep{0.3em}
\item[(a)] If $X$ admits an LUR renorming, then $\SE(X)$ is dense in $X^*$.
\item[(b)] If $X$ admits an strictly convex renorming, then functionals exposing $B_X$ are dense in $X^*$.
\end{enumerate}
\end{corollary}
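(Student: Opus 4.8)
The plan is to obtain both items as the special case $C=B_X$ of Theorem~\ref{thm:LURrenorming}, so the whole task reduces to identifying the Bishop-Phelps property of the unit ball with property A. First I would note that, for every Banach space $Y$ and every $T\in\Lin(X,Y)$, one has $\sup\{\|Tx\|\colon x\in B_X\}=\|T\|$; hence $T$ attains this supremum as a maximum precisely when $T\in\NA(X,Y)$. Consequently the set of operators from $X$ to $Y$ for which $\sup\{\|Tx\|\colon x\in B_X\}$ is a maximum is dense in $\Lin(X,Y)$ if and only if $\NA(X,Y)$ is dense in $\Lin(X,Y)$. Since $B_X$ is a bounded closed absolutely convex subset of $X$, letting $Y$ range over all Banach spaces shows that $B_X$ has the Bishop-Phelps property if and only if $X$ has property A.

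With this identification in hand, item (a) is immediate: assuming $X$ admits an equivalent LUR renorming, Theorem~\ref{thm:LURrenorming}(a) applied to $C=B_X$ gives that $\SE(B_X)$ is dense in $X^*$, and $\SE(B_X)=\SE(X)$ by definition. Item (b) follows in the same way from Theorem~\ref{thm:LURrenorming}(b), using the equivalent strictly convex renorming. I would also record the equivalent phrasing of (a) advertised in the introduction: by the \v{S}mulyan test recalled in Subsection~\ref{subsection:notation}, $\SE(X)$ is dense in $X^*$ exactly when the norm of $X^*$ is Fr\'echet differentiable at a dense subset of $X^*$.

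There is no real obstacle here; the only point that deserves a word of care is that Theorem~\ref{thm:LURrenorming} only asks $X$ to \emph{admit} an equivalent LUR (resp.\ strictly convex) norm, whereas property A is hypothesized for the original norm. These are compatible because the Bishop-Phelps property of $B_X$ --- equivalently, property A --- is a statement about the given norm, and Theorem~\ref{thm:LURrenorming} then yields density in $X^*$ of the strongly exposing (resp.\ exposing) functionals of that same original unit ball $B_X$, which is exactly what the corollary claims.
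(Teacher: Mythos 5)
Your proposal is correct and matches the paper exactly: the paper states the corollary as an immediate application of Theorem~\ref{thm:LURrenorming} to $C=B_X$, relying on the same identification of property A with the Bishop-Phelps property of the unit ball that you spell out. Your extra remarks (the \v{S}mulyan reformulation and the compatibility of the renorming hypothesis with the original norm) are accurate but not needed.
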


We will present in Example~\ref{example:Lindenstrauss-not-sufficient} a separable Banach space $X$ such that $B_X$ is the closed convex hull of its strongly exposed points but $\SE(X)$ is not dense in $X^*$ (even more, exposing functionals are not dense in $X^*$), see also Remark~\ref{remark:torusnotA}. In particular, this space fails property A while it fulfills the necessary condition provided by Lindenstrauss in \cite[Theorem 2]{Lin}, so Corollary~\ref{corollary:PropertyAimpliesSEdense} really improves Lindenstrauss result. As far as we know, no example of such phenomenon has already appeared in the literature (that is, an example of a Banach space $X$ for which the unit ball is the closed convex hull of its strongly exposed points but $\SE(X)$ is not dense in $X^*$). From the isomorphic point of view, it is known that a separable Banach space $X$ has the RNP if and only if every equivalent renorming of $X$ satisfies one (and so all) of the following properties (see \cite[Theorem~3.4]{GMZ} for instance): (i) the unit ball contains slices of arbitrary small diameter, (ii) the unit ball is the closed convex hull of its strongly exposed points, (iii) the strongly exposing functionals are dense in $X^*$. It is immediate that conditions (i) and (ii) are not equivalent for a concrete norm (containing just one strongly exposed point implies dentability). Remark~\ref{remark:torusnotA} shows that conditions (ii) and (iii) are neither equivalent for a concrete norm.

\section{Sufficient conditions for the denseness of $\ASE(X,Y)$}\label{section:sufficient}

Our aim here is to provide conditions on a Banach space $Y$ ensuring that $\ASE(X,Y)$ is dense, provided $\SE(X)$ is dense.

\subsection{When the range space satisfies some previously known conditions}\label{subsection31-known}

We start showing that the known conditions for a Banach space $Y$ to have Lindenstrauss property B actually imply $\ASE(X,Y)$ to be dense when $\SE(X)$ is dense in $X^*$. As far as we know, there are only two properties studied in the literature which imply Lindenstrauss property B: property $\beta$ introduced by Lindenstrauss himself in the seminal paper \cite{Lin} and the weaker property quasi-$\beta$ introduced by Acosta, Aguirre, and Pay\'{a} in 1996 \cite{AAP2}.
A Banach space $Y$ is said to have {\it property quasi-$\beta$} if there exist $A = \{y_{\lambda}^*\colon \lambda \in \Lambda\} \subseteq S_{Y^*}$, a mapping $\sigma\colon A \longrightarrow S_Y$, and a function $\rho\colon A \longrightarrow \R$ satisfying
	\begin{itemize}
		\setlength\itemsep{0.3em}
		\item[(i)] $y_{\lambda}^*(\sigma(y_{\lambda}^*)) = 1$ for every $\lambda\in\Lambda$,
		\item[(ii)] $|z^* (\sigma(y^*))| \leq \rho (y^*) < 1$ whenever $y^*, z^* \in A$ with $y^* \neq z^*$,
		\item[(iii)] for every $e^* \in \ext{B_{Y^*}}$, there exists a subset $A_{e^*} \subseteq A$ and $t \in \mathbb{C}$ with $|t|=1$ such that $te^* \in \overline{A_{e^*}}^{w^*}$ and $\sup \{ \rho(y^*)\colon y^* \in A_{e^*}\} < 1$. 	
	\end{itemize}
If there is $0 \leq R<1$ such that $\rho(y^*)\leq R$ for all $y^*\in A$, then the space $Y$ has \emph{property $\beta$} introduced by Lindenstrauss (with an equivalent formulation). Examples of Banach spaces with property $\beta$ are finite-dimensional spaces whose unit ball is a polytope (in the complex case, those spaces for which the set of extreme points of the dual ball is finite up to rotation) and closed subspaces of $\ell_\infty$ containing the canonical copy of $c_0$. There are examples of Banach spaces with property quasi-$\beta$ which do not have property $\beta$ (and we will show some more in Remark~\ref{rem:examqbetanotbeta}), including some finite-dimensional real spaces whose dual unit ball has infinitely many extreme points and the so-called Gowers space \cite[Example~7]{AAP2} (which is an isometric predual of the Lorentz sequence space $d(\{1/n\},1)$). We refer the interested reader to \cite{AAP2}.

The following result can be proved in the same way as in \cite[Theorem 2]{AAP2} but using the denseness of $\SE(X)$ instead of the Bishop-Phelps theorem.

\begin{theorem}\label{thm:quasibeta}
Let $X$, $Y$ be Banach spaces. Suppose that $\SE(X)$ is dense in $X^*$ and that $Y$ has property quasi-$\beta$. Then, for every closed subspace $\mathcal{I}(X,Y)$ of $\Lin(X,Y)$ containing rank one operators, $\ASE (X,Y)\cap \mathcal{I}(X,Y)$ is dense in $\mathcal{I}(X,Y)$. In particular, $\ASE(X,Y)$ is dense in $\Lin(X,Y)$ and $\ASE(X,Y)\cap \cpt(X,Y)$ is dense in $\cpt(X,Y)$
\end{theorem}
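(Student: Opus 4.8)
The proof follows the scheme of \cite[Theorem~2]{AAP2} (property quasi-$\beta$ $\Rightarrow$ Lindenstrauss property B), with every appeal to the Bishop--Phelps theorem replaced by the denseness of $\SE(X)$ in $X^*$, and is then combined with Lemma~\ref{lem:RMI}. Since $\cpt(X,Y)$ is a closed subspace of $\Lin(X,Y)$ containing the rank one operators, and $\Lin(X,Y)$ is trivially such a subspace, the two ``in particular'' assertions follow from the general statement, so I fix a closed subspace $\mathcal{I}(X,Y)$ of $\Lin(X,Y)$ containing the rank one operators, an operator $T\in\mathcal{I}(X,Y)$ with $\|T\|=1$, and $\eps>0$. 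By parts (2) and (4) of Lemma~\ref{lem:RMI}, it suffices to produce $S\in\mathcal{I}(X,Y)$ with $\|S-T\|<\eps/2$ together with some $y^*\in B_{Y^*}$ such that $S^*y^*\in\SE(X)$ and $\|S^*y^*\|=\|S\|$: indeed, Lemma~\ref{lem:RMI}(4) then says that $S$ attains its norm at some $x_0\in\strexp{B_X}$, and Lemma~\ref{lem:RMI}(2) yields $S'\in\ASE(X,Y)$ with $S'-S$ of rank one and $\|S'-S\|<\eps/2$; since $\mathcal{I}(X,Y)$ is closed, contains the rank one operators, and (see below) $S-T$ is a norm-limit of rank one operators, we get $S,S'\in\mathcal{I}(X,Y)$ and $\|S'-T\|<\eps$, so $S'\in\ASE(X,Y)\cap\mathcal{I}(X,Y)$.

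To construct such an $S$ I would imitate the perturbation argument of \cite[Theorem~2]{AAP2}. Write $A=\{y_\lambda^*\colon\lambda\in\Lambda\}$, $\sigma$, $\rho$ for the quasi-$\beta$ data of $Y$. Combining condition (iii) with the Krein--Milman theorem, $A$ is norming for $Y$, hence $\|T\|=\sup_\lambda\|y_\lambda^*\circ T\|=1$; moreover $w^*$-lower semicontinuity of the norm gives $\sup\{\|y_\lambda^*\circ T\|\colon y_\lambda^*\in A_{e^*}\}\geq\|e^*\circ T\|$ for every $e^*\in\ext{B_{Y^*}}$. As in \cite{AAP2}, one picks an extreme point $e^*$ of $B_{Y^*}$ with $\|e^*\circ T\|$ close to $1$, uses (iii) to fix $A_{e^*}\subseteq A$ and $t\in\T$ with $te^*\in\overline{A_{e^*}}^{w^*}$ and $R:=\sup\{\rho(y^*)\colon y^*\in A_{e^*}\}<1$, and then perturbs $T$ by (a norm-convergent series of) rank one operators in directions $\sigma(y_\lambda^*)$ with $y_\lambda^*\in A_{e^*}$, so designed that, for a suitably chosen $\lambda_0$ with $y_{\lambda_0}^*\in A_{e^*}$, the norm of $y_{\lambda_0}^*\circ S$ is pushed strictly above the bound $1+\|S-T\|\,R$ which (by (i), (ii) and $\rho(y_{\lambda_0}^*)\leq R<1$) dominates $\|y_\lambda^*\circ S\|$ for all $\lambda\neq\lambda_0$; this forces $\|S\|=\|y_{\lambda_0}^*\circ S\|$. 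The single modification of \cite{AAP2} is that each perturbing functional on $X$ is taken in $\SE(X)$ (possible since $\SE(X)$ is dense in $X^*$ and $\R^+\SE(X)=\SE(X)$), and the perturbation is tuned so that $S^*y_{\lambda_0}^*=y_{\lambda_0}^*\circ S$ is a positive scalar multiple of a single fixed element of $\SE(X)$ (a strongly exposing functional approximating $y_{\lambda_0}^*\circ T$, suitably normalized); then $S^*y_{\lambda_0}^*\in\SE(X)$ and $\|S^*y_{\lambda_0}^*\|=\|S\|$, and we take $y^*:=y_{\lambda_0}^*$.

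The hard part is the same as in \cite{AAP2}: choosing $e^*$, $\lambda_0$ and the perturbation sizes so that $y_{\lambda_0}^*$ genuinely realizes $\|S\|=\sup_\lambda\|y_\lambda^*\circ S\|$. This is what condition (iii) is for (it provides a constant $R<1$ on the relevant subfamily $A_{e^*}$), and it requires a careful calibration of the perturbation against $1-R$ and against how close $\|e^*\circ T\|$ can be pushed to $\|T\|$; because that admissible size depends on $R=R_{e^*}$ while $e^*$ must be chosen before $R$ is known, a convergent iterative construction (boosting by small amounts at each step), rather than a single rank one correction, is the safe way to avoid the circularity. The genuinely new point, absent from \cite{AAP2}, is to carry the strong exposedness of the perturbing functionals over to $S^*y_{\lambda_0}^*$; this is managed by arranging $S^*y_{\lambda_0}^*$ to be an honest positive multiple of one fixed strongly exposing functional, since an arbitrary (finite or infinite) sum of strongly exposing functionals need not be strongly exposing.
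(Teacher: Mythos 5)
Your reduction via Lemma~\ref{lem:RMI} (produce $S$ close to $T$ with $S^*y^*\in\SE(X)$ and $\|S^*y^*\|=\|S\|$, then upgrade to an absolutely strongly exposing operator by a further rank-one perturbation) is exactly the paper's, and your idea of tuning the perturbation so that $S^*y^*$ is a positive multiple of one fixed strongly exposing functional approximating $T^*y^*$ is also what the paper does (there, $S_2^*(y_1^*)=(1+\eps/2)z^*$ with $z^*\in\SE(X)$). The problem is the step you yourself flag as circular: you must choose an extreme point $e^*$ with $\|T^*e^*\|$ close to $\|T\|$ \emph{before} you learn the constant $R_{e^*}=\sup\{\rho(y^*)\colon y^*\in A_{e^*}\}<1$, yet the admissible closeness depends on $1-R_{e^*}$. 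Your proposed fix --- ``a convergent iterative construction, boosting by small amounts at each step'' --- is not substantiated, and I do not see how it can work: at every stage the size of the boost you can afford from the remaining $\eps$-budget must exceed (norm deficit of $e^*$)$/(1-R_{e^*})$, and nothing in property quasi-$\beta$ prevents every extreme point nearly attaining the adjoint norm from having $R_{e^*}$ so close to $1$ that this quotient exceeds any prescribed bound; shrinking the deficit changes which $e^*$ you find and hence which $R_{e^*}$ you face, so the circularity reappears at each step.

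The paper (following \cite[Theorem~2]{AAP2}) breaks this circularity with two specific results that are absent from your proposal. First, Zizler's theorem \cite[Proposition~4]{Z} lets one perturb $T$ by less than $\eps$ to an operator $S_1$ with $S_1^*\in\NA(Y^*,X^*)$ (and, since the perturbation is a limit of finite-rank operators, $S_1$ stays in $\mathcal{I}(X,Y)$). Second, Johannesen's theorem \cite[Theorem~5.8]{Lima1978} guarantees that $S_1^*$ attains its norm at some $e^*\in\ext{B_{Y^*}}$ \emph{exactly}. Only then is the quasi-$\beta$ datum $\eta=\sup\{\rho(y^*)\colon y^*\in A_{e^*}\}<1$ fixed, after which one may choose $\gamma>0$ with $1+\eta(\eps/2+\gamma)<(1+\eps/2)(1-\gamma)$ and, by $w^*$-lower semicontinuity of $y^*\mapsto\|S_1^*y^*\|$ together with $te^*\in\overline{A_{e^*}}^{w^*}$, find $y_1^*\in A_{e^*}$ with $\|S_1^*y_1^*\|>1-\gamma$. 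A single rank-one correction in the direction $\sigma(y_1^*)$ then finishes the argument. Without the Zizler--Johannesen step your selection of $e^*$ cannot be justified, so the proof as proposed has a genuine gap.
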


\begin{proof}
Let $T \in \Lin (X,Y)$, $\|T\|=1$ and $\eps >0$ be given. Due to a result by Zizler \cite[Proposition~4]{Z}, there is $S_1\in \Lin(X,Y)$ with $\|S_1\|=1$ such that $\|T-S_1\|<\eps$ and $S_1\in \NA(Y^*,X^*)$. Going into the proof of \cite[Proposition~4]{Z}, one realizes that when $T\in \mathcal{I}(X,Y)$, then $S_1\in \mathcal{I}(X,Y)$ as $T-S_1$ is the limit of a sequence of operators of finite rank. On the other hand, by a result of Johannesen (see \cite[Theorem~5.8]{Lima1978}), $S_1^*$ attains its norm at an extreme point $e^*$ of $B_{Y^*}$. As $Y$ has property quasi-$\beta$, there exists $A_{e^*} \subseteq A$ and $t \in \mathbb{T}$ such that $te^* \in \overline{A_{e^*}}^{w^*}$ and $\eta := \sup \{ \rho(y^*)\colon y^* \in A_{e^*} \} < 1$. Fix $\gamma >0$ so that
\[
1+\eta \left( \frac{\eps}{2} + \gamma \right) < \left(1+\frac{\eps}{2}\right)(1-\gamma)
\]
and find $y_1^* \in A_{e^*}$ such that $\|S_1^* y_1^* \| > 1 - \gamma$. Since $\SE(X)$ is dense, there exists $z^* \in \SE(X)$ such that $\| z^* - S_1^* y_1^* \| < \gamma$ and $\|z^* \| = \|S_1^* (y_1^*)\|$. Define $S_2 \in \mathcal{I}(X,Y)$ by
\[
S_2(x) = S_1(x) + \left[ \left(1+\frac{\eps}{2} \right) z^* (x) - S_1^* (y_1^*) (x)\right]y_1
\]
for every $x \in X$, where $y_1 = \sigma(y_1^*)$. Arguing as in the proof of \cite[Theorem 2]{AAP2}, we have
\begin{enumerate}
\setlength\itemsep{0.3em}
\item $\| S_2-S_1\| < \eps$,
\item $S_2^* (y_1^*) =\left(1+\frac{\eps}{2}\right) z^*$,
\item $\|S_2^*\| = \|S_2^*(y_1^*)\|$.
\end{enumerate}
Since $z^*\in \SE(X)$, it follows from (2), (3), and Lemma~\ref{lem:RMI}, that there exists $S_3 \in \ASE(X,Y)\cap \mathcal{I}(X,Y)$ such that $\|S_2- S_3\| < \eps$; hence $\|T- S_3\|<3\eps$. This completes the proof.
\end{proof}

Let us present now new examples of Banach spaces with property quasi-$\beta$. We need some notation. Given a Banach space $Y$, let us consider the equivalence relation on $\ext{B_{Y^*}}$ given by $x^* \sim y^*$ if and only if $x^* = \lambda y^*$ for some $\lambda \in \mathbb{T}$. We write $E_Y$ to denote the topological space $\ext{B_{Y^*}}/\sim$ endowed with the quotient topology of the weak star topology. A \emph{real} Banach space is said to be \emph{polyhedral} if the unit ball of any of its finite-dimensional subspaces is a polyhedron (the convex hull of finitely many points). We refer to \cite{FV2004} and references therein for an exhaustive account on different definitions of polyhedrality.

\begin{example}\label{Example:quasibeta_example}
The following are examples of Banach spaces having property quasi-$\beta$
\begin{enumerate}
\setlength\itemsep{0.3em}
\item Preduals of $\ell_1$ which are polyhedral (real case).
\item Banach spaces $Y$ for which $E_Y$ is discrete (i.e.\ it has no accumulation points). In particular:
\begin{enumerate}
  \item a real Banach space $Y$ which satisfies that the $w^*$-accumulation points of $\ext{B_{Y^*}}$ belongs to the norm interior of $B_{Y^*}$ (they are the so-called (III)-polyhedral spaces following \cite[Definition 1.1]{FV2004});
  \item arbitrary closed subspaces of (real or complex) $c_0(\Gamma)$.
\end{enumerate}
\end{enumerate}
\end{example}

As far as we know, assertion (1) was unknown. Assertion (2) appeared in the PhD dissertation of F.~Aguirre (see \cite[Teorema 1.20]{AguirrePhD}) but it has not been published in the journal literature. Its consequences for (III)-polyhedral spaces and for closed subspaces of $c_0(\Gamma)$, while easy, seem to be new.

\begin{proof}
(1): Suppose that $Y$ is a \emph{real} polyhedral $\ell_1$-predual space. Notice that every extreme point of $B_{Y^*}$ is $w^*$-exposed \cite[Lemma 3.3]{Sp}. So, for $y^* \in \ext{B_{Y^*}}$, we can consider $\sigma(y^*) \in S_Y$ such that $y^* (\sigma(y^*)) = 1$ and $|z^* (\sigma(y^*))| < 1$ whenever $z^* \not\in \{y^*, -y^*\}$.
Next, by \cite[Theorem~4.1]{CMPV2016} we have that
\[
\rho(y^*) := \sup \bigl\{ |z^* (\sigma(y^*))|\colon z^* \in \ext{B_{Y^*}} \setminus \{ \pm y^* \} \bigr\} < 1
\]
for every $y^* \in \ext{B_{Y^*}}$ (this is called (BD) polyhedrality in \cite{CMPV2016}).
Now, consider the set $A \subseteq S_{Y^*}$ given by $A = \{y^* \in \ext{B_{Y^*}}\colon \mathfrak{u} (y^*) =1 \}$, where $\mathfrak{u}$ is the vector in $Y^{**}$ which corresponds isometrically to $(1,1,\ldots) \in \ell_\infty$. Since $\mathfrak{u}$ is an extreme point of $B_{Y^{**}}$, we have that $|\mathfrak{u} (y^*)|=1$ for every $y^* \in \ext{B_{Y^*}}$ (see \cite[Corollary 2.8]{KMMP}, for instance). For each $e^* \in \ext{B_{Y^*}}$, take $t \in \{1,-1\}$ so that $\mathfrak{u}(te^*) = 1$ and set $A_{e^*} := \{ t e^*\}\subset A$. Therefore, the set $A$ and the mappings $\sigma$ and $\rho$ satisfy the conditions (i)-(iii) of property quasi-$\beta$.

(2): Suppose that $Y$ is a real or complex Banach space satisfying that $E_Y$ contains no accumulation points. By \cite[Proposition 2.2]{BM2006}, this implies that every point in $\ext{B_{Y^*}}$ is $w^*$-strongly exposed. The Axiom of Choice allows us to consider a subset $A$ of $\ext{B_{Y^*}}$ which is consists of a unique representative of each equivalence class. For $y^* \in A$, let $\sigma(y^*)$ be an element in $S_Y$ which strongly exposes $y^*$. Observe that
\[
\rho (y^*) := \sup\{ |z^* (\sigma(y^*))|\colon z^* \in A, z^* \neq y^* \} < 1
\]
for each $y^* \in A$. Indeed, otherwise, we may find a sequence $\{z_n^*\} \subseteq A$ with $z_n^* \neq y^*$ (so $[z_n^*]\neq [y^*]$ by the way we have selected $A$) such that $z_n^* (\sigma(y^*)) \longrightarrow 1$. Since $\sigma(y^*)$ strongly exposes $y^*$, we get that $\{z_n^*\}$ converges in norm to $y^*$. This implies that the sequence of equivalence classes $\{[z_n^*]\}$ converges to the equivalence class $[y^*]$, which contradicts the fact that $E_Y$ has no accumulation points. Finally, given $e^* \in \ext{B_{Y^*}}$, let $t \in \mathbb{C}$ with $|t|=1$ such that $\overline{t} e^* \in A$ and set $A_{e^*} := \{ \overline{t} e^*\}$. Then $e^* \in t A_{e^*}$ and $\sup \{ \rho(y^*)\colon y^* \in A_{e^*}\} < 1$. This shows that $Y$ has property quasi-$\beta$.

Finally, it is immediate that (a) implies that $E_Y$ is discrete. To get (b), it is immediate that $Y = c_0(\Gamma)$ satisfies that $0$ is the unique $w^*$-accumulation point of $\ext{B_{Y^*}}$, and this property clearly goes down to closed subspaces (see, for instance, \cite[Theorem 1.2]{FV2004}).
\end{proof}

\begin{remark}\label{rem:examqbetanotbeta}
\begin{enumerate}
\setlength\itemsep{0.3em}
\item Observe that \cite[Theorem 2]{AAP2} and Example \ref{Example:quasibeta_example} show that closed subspaces of $c_0$ have property B. As far as we know, this result is new.
\item Also, by the proof of \cite[Theorem 2]{AAP2}, it follows from Example \ref{Example:quasibeta_example} that for every closed subspace $Y$ of $c_0$, $\NA(X,Y)\cap \cpt(X,Y)$ is dense in $\cpt(X,Y)$ for every Banach space $X$. As far as we know, this result is also new. It was known with the extra assumption that $Y$ has the approximation property (and in this case every element in $\cpt(X,Y)$ can be approximated by elements in $\NA(X,Y)$ of finite rank), see \cite[Example~4.7]{martinjfa}.
 \item There are closed subspaces of $c_0$ without property $\beta$. Indeed, for each $k \in \N$, consider $Y_k = \mathbb{R}^2$ endowed with the norm $\|(x,y)\| = \max \{ |x|, |y| + \frac{1}{k}|x| \}$. Viewing $Y_k$ as a closed subspace of the three dimensional $\ell_\infty$ space, the space $Y:= [\oplus_{k=1}^\infty Y_k]_{c_0}$ is a closed subspace of $c_0$. It is known that $Y$ lacks property $\beta$ (see the arguments in \cite[Example~4.1]{ACKLM2015}).
\end{enumerate}
\end{remark}

Dealing with compact operators, there are more sufficient conditions on a Banach space $Y$ than the property quasi-$\beta$ to ensure that $\NA(X,Y)\cap \cpt(X,Y)$ is dense in $\cpt(X,Y)$ for every Banach space $X$. We refer to \cite{Martin2016} for a detailed account. Some of the results have a counterpart for $\ASE(X,Y)\cap \cpt(X,Y)$ when $\SE(X)$ is dense. The following is one of interesting examples.

\begin{example}\label{thm:cpt}
Let $X$ be a Banach space such that $\SE (X)$ is dense in $X^*$ and let $Y$ be a Banach space such that $Y^*\equiv L_1(\mu)$ for some measure $\mu$. Then $\ASE (X,Y) \cap \cpt (X,Y) $ is dense in $\cpt (X,Y)$.
\end{example}

The proof is motivated by the corresponding result of Johnson and Wolfe \cite{JW1979} for norm attaining compact operators.

\begin{proof}
Let $T \in \cpt(X, Y)$ and $\eps >0$ be given. Take $\{y_1,\ldots, y_n\}$ a $\frac{\eps}{8}$-net of $T(B_{X})$. By results of Lazar and Lindenstrauss in the real case (see \cite[Theorem~3.1]{LazarLindenstrauss}) and Nielsen and Olsen in the complex case (see \cite[Theorem~1.3]{NieOls}), we may find a $1$-complemented subspace $E$ of $Y$ such that $E$ is isometric to $\ell_\infty^m$ for some $m \in \N$ and for each $i=1,\ldots, n$, there exists $e_i \in E$ so that $\|y_i -e_i \|< \frac{\eps}{8}$. Let us denote by $P$ a norm one projection from $Y$ onto $E$ and write $J\colon E\longrightarrow Y$ for the canonical inclusion. For each $x \in B_X$, there exists $e \in E$ such that $\|T(x)-e\|<\frac{\eps}{4}$; hence
\[
\|T(x)-JPT(x)\| \leq \|T(x)-J(e)\|+\|e-PT(x)\| < \frac{\eps}{2}.
\]
This shows that $\|T - JPT\| \leq \frac{\eps}{2}$. Since $PT \in \cpt(X, E)$ and $E$ is isometric to $\ell_\infty^m$, by Theorem~\ref{thm:quasibeta}, there exists $G \in \ASE(X,E)$ such that$\|PT-G\| < \frac{\eps}{2}$, so
$$
\|T-JG\| \leq \|T-JPT\| + \|JPT-JG\|\leq \|T-JPT\|+\|PT-G\|<\eps.
$$
Finally, $JG\in \ASE(X,Y)\cap\cpt(X,Y)$.
\end{proof}

Beside the property of being the predual space of $L_1$-space, there is another property, called \emph{ACK$_\rho$ structure}, on the range space $Y$ which guarantees that $\ASE (X, Y) \cap \mathcal{K} (X,Y)$ is dense in $\mathcal{K} (X,Y)$ for every Banach space $X$ provided that $\SE (X)$ is dense in $X^*$. In order to establish the result, we need the following notation and definition. Recall from \cite{CGKS2018} that a Banach space $Y$ is said to have \emph{ACK$_\rho$ structure} whenever there exists a $1$-norming set $\Gamma \subseteq B_{Y^*}$ such that for every $\eps >0$ and every nonempty relatively $w^*$-open subset $U \subseteq \Gamma$, there exist a nonempty subset $V \subseteq U$, $y_1^* \in V$, $e \in S_Y$ and an operator $T \in \Lin (Y,Y)$ with the following properties:
\begin{enumerate}
\setlength\itemsep{0.3em}
\item $\| Fe \| = \| F \| =1$,
\item $y_1^* (Fe) = 1$,
\item $F^* y_1^* = y_1^*$,
\item denoting $V_1 = \{ y^* \in \Gamma \colon \|F^* y^* \| + (1-\eps) \| (\text{Id}_{Y^*} - F^*) (y^*) \| \leq 1\}$, then $|y^* (Fe) | \leq \rho$ for every $y^* \in \Gamma \setminus V_1$,
\item $\dist (F^* y^*, \aconv\{0,V\}) < \eps_2$ for every $y^* \in \Gamma$,
\item $|v^* (e) - 1 | \leq \eps$ for every $v^* \in V$.
\end{enumerate}
Given Banach spaces $X$ and $Y$, and $\Gamma \subset Y^*$, an operator $T \in \Lin (X,Y)$ is said to be \emph{$\Gamma$-flat} \cite{CGKS2018} if $T^* \vert_\Gamma \colon (\Gamma, w^*) \longrightarrow (X^*, \| \cdot\|_{X^*})$ is openly fragmented. We denote the set of all $\Gamma$-flat operators by $\flat(X,Y)$. Among others results, it is known that every Asplund operator from $X$ to $Y$ is $\Gamma$-flat for every $\Gamma \subseteq Y^*$, and that $\Lin (X, Y) = \flat(X,Y)$ when $(\Gamma, w^*)$ is discrete.

We state the promised result which provides new information about the set $\ASE(X,Y)$ in presence of ACK$_\rho$ structure.

\begin{theorem}\label{propo:Gammaflat-just-denseness}
Let $X$ be a Banach space. If $\SE(X)$ is dense in $X^*$ and $Y$ is a Banach space having ACK$_\rho$ structure associated with a $1$-norming set $\Gamma \subseteq B_{Y^*}$, then $\flat(X,Y) \subseteq \overline{\ASE(X,Y)}$.
\end{theorem}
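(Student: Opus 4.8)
The goal is to show that if $T\in\flat(X,Y)$, then $T$ can be approximated by operators in $\ASE(X,Y)$. The natural strategy is to mimic the structure of the ACK$_\rho$ machinery as it was used for norm attainment in \cite{CGKS2018}, but replacing the Bishop--Phelps theorem (which supplies norm-attaining functionals on $X$) with the hypothesis that $\SE(X)$ is dense in $X^*$, exactly as Theorem~\ref{thm:quasibeta} replaced Bishop--Phelps with the denseness of $\SE(X)$ in the quasi-$\beta$ setting. Fix $T\in\flat(X,Y)$ with $\|T\|=1$ and $\eps>0$. First I would use the definition of $\Gamma$-flatness: openly fragmentedness of $T^*\restricted_\Gamma$ lets me locate a nonempty relatively $w^*$-open subset $U\subseteq\Gamma$ on which the norm-oscillation of $T^*$ is small, while simultaneously $U$ witnesses that $\|T^*\restricted_U\|$ is close to $\|T\|=1$ (one picks $U$ inside a slice $\{y^*\in\Gamma:\|T^*y^*\|>1-\delta\}$, using that $\Gamma$ is $1$-norming).

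Next I would apply the ACK$_\rho$ structure to this $U$: this produces $V\subseteq U$, $y_1^*\in V$, $e\in S_Y$, and an operator $F\in\Lin(Y,Y)$ satisfying conditions (1)--(6). The idea is then to build the approximating operator by a rank-one perturbation of $FT$ (or of $T$ composed appropriately with $F$) in the spirit of the perturbation $S_2$ in the proof of Theorem~\ref{thm:quasibeta}. Concretely, since $\|T^*y_1^*\|$ is close to $1$ and $\SE(X)$ is dense in $X^*$, choose $z^*\in\SE(X)$ with $\|z^*-T^*y_1^*\|$ small and $\|z^*\|=\|T^*y_1^*\|$, and define a new operator $S$ by adding to $FT$ a term of the form $\bigl[(1+\tfrac{\eps}{2})z^*-(FT)^*y_1^*\bigr]\otimes e$, so that $S^*y_1^*=(1+\tfrac{\eps}{2})z^*\in\SE(X)$ (up to scaling) and $\|S^*\|=\|S^*y_1^*\|$. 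The ACK$_\rho$ conditions — (1)--(3) give the algebraic normalization at $e$ and $y_1^*$; (4) controls the behaviour of $y^*(Fe)$ for $y^*$ outside the ``good'' set $V_1$ by the constant $\rho<1$; (5)--(6) control the flat part and the near-flatness of $e$ on $V$ — are precisely what is needed to verify that $\|S\|$ is still attained essentially through $y_1^*$ and hence, by Lemma~\ref{lem:RMI}(4) (and then (2)), that $S\in\overline{\ASE(X,Y)}$, with $\|S-FT\|<$ (something $O(\eps)$) and $\|FT-T\|$ also small because $F$ is close to being a norm-one projection onto a subspace containing a good approximation of $T(B_X)$ (this last point is where the net-approximation argument, analogous to Example~\ref{thm:cpt}, enters, exploiting that $T$ is $\Gamma$-flat rather than merely compact).

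\textbf{Main obstacle.} The delicate step is the norm estimate showing that the perturbed operator $S$ does not attain (or nearly attain) its norm along functionals $y^*\in\Gamma\setminus V_1$: one must combine the fragmentation estimate from $\Gamma$-flatness of $T$ with condition (4) of the ACK$_\rho$ definition (the $\rho<1$ bound on $|y^*(Fe)|$) and the splitting $\|F^*y^*\|+(1-\eps)\|(\id_{Y^*}-F^*)y^*\|\le 1$ defining $V_1$, so that the added rank-one term, which has norm $\approx 1+\tfrac{\eps}{2}$ at $y_1^*$, genuinely dominates the original operator's action against all competing functionals. This is exactly the kind of bookkeeping carried out in \cite{CGKS2018} for the norm-attainment version, and here it has to be done carefully enough that the conclusion upgrades from ``$S$ attains its norm'' to ``$S^*y_1^*\in\SE(X)$ and the norm is attained through $y_1^*$,'' which then feeds Lemma~\ref{lem:RMI}(4) to land in $\overline{\ASE(X,Y)}$. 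I expect the argument to go through with $\Gamma$-flatness substituting for Asplundness in the right places, but tracking the dependence of all the small parameters ($\delta$, the fragmentation radius, $\eps_2$, $\gamma$) on $\eps$ so that everything closes up is the part requiring the most care.
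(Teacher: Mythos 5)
Your opening moves are the right ones and essentially match the paper's: the paper proves this statement as an immediate corollary of a stronger Bishop--Phelps--Bollob\'as-type result (Theorem~\ref{thm:ACK}), whose proof begins exactly as you describe --- use $\Gamma$-flatness (via Lemma~\ref{lem:cgks}) to produce a relatively $w^*$-open $U\subseteq\Gamma$ on which $T^*$ has small oscillation and is uniformly close to a single $x_r^*\in\SE(X)$, feed $U$ into the ACK$_\rho$ structure to obtain $V$, $y_1^*$, $e$ and $F$, and finish with Lemma~\ref{lem:RMI}. The gap is in the perturbation you build. You propose $S=FT+\bigl[(1+\tfrac{\eps}{2})z^*-(FT)^*y_1^*\bigr]\otimes e$, keeping the composite $FT$ and adding a rank-one correction, and you justify $\|FT-T\|$ being small by an appeal to a net-approximation argument as in Example~\ref{thm:cpt}. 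This fails: the operator $F$ supplied by the ACK$_\rho$ structure depends only on the $w^*$-open set $U\subseteq\Gamma$ and has nothing to do with $T(B_X)$, so there is no reason for $FT$ to be close to $T$. Already for $Y=c_0$ with $\Gamma=\{\pm e_n^*\}$ and $U=\{e_1^*\}$ one may take $F$ to be the first coordinate projection, so that $FT$ is rank one and $\|FT-T\|$ is of order $\|T\|$ for a generic $T$. The net-approximation device of Example~\ref{thm:cpt} is specific to $L_1$-preduals (it uses Lazar--Lindenstrauss/Nielsen--Olsen to embed a finite net of the compact set $T(B_X)$ into an $\ell_\infty^m$-subspace) and has no counterpart here.

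The correct perturbation --- the one used in \cite{CGKS2018} and in the proof of Theorem~\ref{thm:ACK} --- is $S(x)=x_r^*(x)\,F(e)+(1-\delta)(\id_Y-F)T(x)$: one discards the ``$F$-part'' of $T$ altogether and replaces it by the rank-one operator $x_r^*\otimes F(e)$, keeping only the damped complementary part $(1-\delta)(\id_Y-F)T$. Closeness to $T$ is then checked by testing against $y^*\in\Gamma$ (which is $1$-norming): condition (5) puts $F^*y^*$ within $\eps_2$ of $\aconv\{0,V\}$, and the estimate $\|T^*z^*-x_r^*\|<r+\eps+\eta(\eps)$ valid for \emph{every} $z^*\in V$ --- this uniformity over the whole of $V$, not just at $y_1^*$, is essential --- shows that $x_r^*\otimes F(e)$ genuinely replaces the discarded part of $T$ up to $O(\eps)$. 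Norm attainment of $S$ through $y_1^*$ then follows from $F^*y_1^*=y_1^*$ and $y_1^*(Fe)=1$, since $|y_1^*(S(u_r))|=|x_r^*(u_r)|=1\geq\|S\|$, and Lemma~\ref{lem:RMI} lands you in $\overline{\ASE(X,Y)}$. With your formula, $S^*y^*-T^*y^*$ contains the uncontrolled term $T^*(F^*y^*-y^*)$, so the key estimate $\|S-T\|<\eps$ cannot be closed.
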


Before providing the proof of Theorem \ref{propo:Gammaflat-just-denseness}, let us derive some consequences about the density of $\ASE(X,Y)$.

Observe that if either $X$ or $Y$ is Asplund, then $\Lin (X, Y) = \flat(X,Y)$; hence we have the following.

\begin{corollary}\label{corollary:ACKrho-structure-XorY-Asplund}
Let $X$ and $Y$ be Banach spaces. If either $X$ or $Y$ is an Asplund space, $\SE(X)$ is dense in $X^*$ and $Y$ is a Banach space having ACK$_\rho$ structure, then $\ASE (X, Y)$ is dense in $\Lin (X, Y)$.
\end{corollary}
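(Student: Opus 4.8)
The plan is to deduce Corollary~\ref{corollary:ACKrho-structure-XorY-Asplund} directly from Theorem~\ref{propo:Gammaflat-just-denseness} by showing that the Asplundness hypothesis forces $\flat(X,Y) = \Lin(X,Y)$, so that the inclusion $\flat(X,Y) \subseteq \overline{\ASE(X,Y)}$ becomes $\Lin(X,Y) \subseteq \overline{\ASE(X,Y)}$, i.e. the denseness we want. Since $\Gamma \subseteq B_{Y^*}$ is the fixed $1$-norming set coming from the ACK$_\rho$ structure of $Y$, it suffices to recall the two facts stated just before the statement of Theorem~\ref{propo:Gammaflat-just-denseness}: first, that every Asplund operator from $X$ to $Y$ is $\Gamma$-flat for \emph{every} choice of $\Gamma \subseteq Y^*$; and second, the general fact that $\Lin(X,Y)$ consists entirely of Asplund operators whenever $X$ or $Y$ is an Asplund space.

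\begin{proof}[Proof of Corollary~\ref{corollary:ACKrho-structure-XorY-Asplund}]
Let $\Gamma \subseteq B_{Y^*}$ be a $1$-norming set witnessing the ACK$_\rho$ structure of $Y$. We claim that $\flat(X,Y) = \Lin(X,Y)$ under the hypothesis that $X$ or $Y$ is Asplund. Indeed, if $X$ is Asplund, then every $T \in \Lin(X,Y)$ is an Asplund operator because $T$ factors through the identity of $X$, whose separable restrictions have separable duals; and if $Y$ is Asplund, then $Y^*$ is separable on separable subspaces, so again every $T \in \Lin(X,Y)$ is an Asplund operator (more precisely, $T$ is Asplund since $\overline{T(B_X)}$ sits inside the Asplund space $Y$, so its $w^*$-fragmentability passes to $X^*$). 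In either case, since every Asplund operator from $X$ to $Y$ is $\Gamma$-flat for every $\Gamma \subseteq Y^*$, we obtain $T \in \flat(X,Y)$. Hence $\flat(X,Y) = \Lin(X,Y)$.

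Now apply Theorem~\ref{propo:Gammaflat-just-denseness}: since $\SE(X)$ is dense in $X^*$ and $Y$ has ACK$_\rho$ structure associated with $\Gamma$, we have
$$
\Lin(X,Y) = \flat(X,Y) \subseteq \overline{\ASE(X,Y)},
$$
which is precisely the statement that $\ASE(X,Y)$ is dense in $\Lin(X,Y)$.
\end{proof}

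The only real point requiring care is the verification that Asplundness of $X$ or of $Y$ indeed yields that every operator in $\Lin(X,Y)$ is an Asplund operator, so that the ``Asplund operators are $\Gamma$-flat'' fact can be invoked; this is standard (an operator $T$ is Asplund iff $T(B_X)$ is a dentable, or equivalently $w^*$-fragmentable, set, a property that is automatic when the domain or the range is Asplund), but it is the one place where one leans on external theory rather than on the machinery already developed in the paper. Everything else is a one-line substitution into Theorem~\ref{propo:Gammaflat-just-denseness}, so I do not anticipate any genuine obstacle here; the substance of the argument is entirely contained in that theorem, whose proof is given separately.
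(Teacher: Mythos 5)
Your proposal is correct and follows exactly the paper's route: the paper's entire proof is the one-line observation that if $X$ or $Y$ is Asplund then every operator in $\Lin(X,Y)$ is an Asplund operator, hence $\Gamma$-flat, so $\flat(X,Y)=\Lin(X,Y)$ and Theorem~\ref{propo:Gammaflat-just-denseness} gives the denseness. Your extra verification that Asplundness of the domain or the range makes every bounded operator an Asplund operator is the standard fact the paper implicitly relies on, so there is nothing to add.
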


As compact operators are particular cases of Asplund operators, we also obtain the following consequence.

\begin{corollary}
Let $X$ be a Banach space such that $\SE(X)$ is dense in $X^*$ and let $Y$ be a Banach space having ACK$_\rho$ structure. Then, $\ASE (X, Y)\cap \cpt(X,Y)$ is dense in $\Lin (X, Y)\cap \cpt(X,Y)$.
\end{corollary}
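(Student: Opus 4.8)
The plan is to deduce this corollary directly from Theorem~\ref{propo:Gammaflat-just-denseness} by observing that compact operators are Asplund operators and Asplund operators are $\Gamma$-flat. Specifically, I would argue as follows. Fix a Banach space $X$ with $\SE(X)$ dense in $X^*$ and a Banach space $Y$ with ACK$_\rho$ structure associated to some $1$-norming set $\Gamma\subseteq B_{Y^*}$. It is recorded in the excerpt (just before Theorem~\ref{propo:Gammaflat-just-denseness}) that every Asplund operator from $X$ to $Y$ is $\Gamma$-flat for every $\Gamma\subseteq Y^*$; in particular, since a compact operator is always an Asplund operator (its range is separable and, more to the point, $T^*$ maps $w^*$-convergent bounded nets to norm-convergent ones, which is exactly the fragmentability property needed), we get $\cpt(X,Y)\subseteq \flat(X,Y)$.

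Now let $T\in \Lin(X,Y)\cap \cpt(X,Y)$ and $\eps>0$. By the inclusion above, $T\in \flat(X,Y)$, so Theorem~\ref{propo:Gammaflat-just-denseness} gives $T\in \overline{\ASE(X,Y)}$; hence there is $S\in \ASE(X,Y)$ with $\|S-T\|<\eps$. The only thing that requires a small additional remark is that $S$ can be taken to be compact as well. For this I would go into the proof of Theorem~\ref{propo:Gammaflat-just-denseness} (once available) and check, as is done in the analogous statement Theorem~\ref{thm:quasibeta}, that the perturbation producing $S$ from $T$ is a limit of finite-rank operators, so that $S-T$ (and therefore $S$) stays inside the closed subspace $\cpt(X,Y)$: the construction in such arguments always adds rank-one operators of the form $z^*\otimes y$ (with $z^*\in\SE(X)$ close to $T^*y_1^*$) plus an operator coming from the $\Gamma$-flatness/ACK$_\rho$ machinery which is itself a uniform limit of finite-rank pieces. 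Thus $S\in \ASE(X,Y)\cap \cpt(X,Y)$ and $\|S-T\|<\eps$, proving that $\ASE(X,Y)\cap\cpt(X,Y)$ is dense in $\Lin(X,Y)\cap\cpt(X,Y)=\cpt(X,Y)$.

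The main (and only) obstacle is bookkeeping: one must be confident that the perturbation in the proof of Theorem~\ref{propo:Gammaflat-just-denseness} respects membership in $\cpt(X,Y)$, exactly in the way Theorem~\ref{thm:quasibeta} was phrased for an arbitrary closed subspace $\mathcal I(X,Y)$ containing rank-one operators. If Theorem~\ref{propo:Gammaflat-just-denseness} is itself proved in that ``subspace'' form (replacing $\ASE(X,Y)$ by $\ASE(X,Y)\cap\mathcal I(X,Y)$ for any closed $\mathcal I(X,Y)\supseteq$ rank-one operators that also contains the relevant ACK$_\rho$ building blocks), then this corollary is literally the instance $\mathcal I(X,Y)=\cpt(X,Y)$ and nothing further is needed. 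Either way, no new ideas beyond ``compact $\Rightarrow$ Asplund $\Rightarrow$ $\Gamma$-flat'' and a routine tracking of the perturbation are required.
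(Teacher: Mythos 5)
Your proposal is essentially the paper's own (unwritten) argument: the corollary is presented as an immediate consequence of Theorem~\ref{propo:Gammaflat-just-denseness} via the chain compact $\Rightarrow$ Asplund $\Rightarrow$ $\Gamma$-flat, and the approximant does stay compact because in the proof of Theorem~\ref{thm:ACK} the perturbation is $S-T=x_r^*(\cdot)F(e)-\bigl(\delta\,\id_Y+(1-\delta)F\bigr)\circ T$, i.e.\ a rank-one operator plus a bounded operator composed with the compact $T$, followed by a further rank-one correction from Lemma~\ref{lem:RMI}. The only small inaccuracy is your justification for the ACK$_\rho$ part: it is compact because it factors through $T$, not because $F$ is a limit of finite-rank pieces; with that adjustment the bookkeeping closes and nothing further is needed.
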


\begin{remark}\label{rem:appliACKstructure}
Let us point out that the previous corollaries provide new examples of pairs $(X,Y)$ for which $\ASE(X,Y)$ (resp.\ $\ASE(X,Y)\cap \mathcal K(X,Y)$) is dense in $\Lin(X,Y)$ (resp.\ $\mathcal K(X,Y)$). Observe that we can require $X$ being Asplund and $\SE(X)$ being dense, and we only have to require on $Y$ having ACK$_\rho$ structure. Let us provide a list of examples of Banach spaces with ACK$_\rho$ structure (see \cite{CGKS2018} for details):
\begin{enumerate}
\setlength\itemsep{0.3em}
\item If $Y$ has property $\beta$, then $Y$ has ACK$_\rho$ structure.
\item If $K$ is a compact Hausdorff topological space and $Y$ has ACK$_\rho$ structure, then so does $C(K,Y)$.
\item A uniform algebra has ACK$_\rho$ structure.
\item The property of having ACK$_\rho$ structure is preserved by taking finite injective tensor products.
\item $c_0(Y)$ and $\ell_\infty(Y)$ has ACK$_\rho$ structure if $Y$ has ACK$_\rho$ structure.
\end{enumerate}
\end{remark}

Now it is time to prove Theorem \ref{propo:Gammaflat-just-denseness}. In order to do so, we will prove a stronger result, related with a version of Bishop-Phelps-Bollob\'{a}s result for absolutely strongly exposing operators, which has its own interest and from which Theorem \ref{propo:Gammaflat-just-denseness} will be obtained immediately.

To this end, we begin by introducing the following definition for functionals, which can be seen as a version of the  Bishop-Phelps-Bollob\'as theorem for $\SE(X)$.

\begin{definition}
A Banach space $X$ is said to have property [P] if there exists a function $\eps \in (0,1) \longmapsto \eta(\eps) >0$ such that whenever $\re x^* (x) > 1-\eta(\eps)$ for $x \in S_X$ and $x^* \in S_{X^*}$, then there exists $y^* \in \SE(X)$ and $y \in S_X$ such that $\|y^*\|=y^*(y)=1$, $\|y^* - x^*\| < \eps$ and $\|y-x\| <\eps$.
\end{definition}

Note that the property [P] implies not only that $\SE(X)$ is dense in $X^*$ but also that $\strexp{B_X}$ is dense in $B_X$. We write
$$
\Pi(X):=\bigl\{(x,x^*)\in S_X\times S_{X^*}\colon x^*(x)=1 \bigr\}.
$$

\begin{lemma}\label{lemma:charac-property-P}
Let $X$ be a Banach space. Then, the following assertions are equivalent:
\begin{enumerate}
\setlength\itemsep{0.3em}
  \item[(i)] $X$ has property [P],
  \item[(ii)] the set $\{(x,x^*)\in \strexp{B_X}\times \SE(X)\colon \|x^*\|=x^*(x)=1\}$ is dense in $\Pi(X)$,
  \item[(iii)] $X$ has property [P] witnessed with the function $\eps\longmapsto \eps^2/2$.
\end{enumerate}
\end{lemma}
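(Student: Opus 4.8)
The plan is to prove the cycle of implications (iii) $\Rightarrow$ (i) $\Rightarrow$ (ii) $\Rightarrow$ (iii). The implication (iii) $\Rightarrow$ (i) is trivial, since (iii) is a particular instance of (i) with a concrete choice of the modulus $\eta$. The implication (i) $\Rightarrow$ (ii) is also essentially a reformulation: given $(x,x^*)\in \Pi(X)$ and $\eps>0$, since $\re x^*(x)=1>1-\eta(\eps)$, property [P] directly produces $(y,y^*)$ with $y\in\strexp{B_X}$, $y^*\in\SE(X)$, $\|y^*\|=y^*(y)=1$, $\|y^*-x^*\|<\eps$ and $\|y-x\|<\eps$; this is exactly density of the indicated set in $\Pi(X)$. (One should note that $\strexp{B_X}\subseteq S_X$ whenever $\strexp{B_X}\neq\emptyset$, and similarly for $\SE(X)$ after normalization, so the pairs described genuinely sit inside $\Pi(X)$.)

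The only implication with real content is (ii) $\Rightarrow$ (iii), where we must upgrade a qualitative density statement to a quantitative Bishop--Phelps--Bollob\'as-type estimate with the explicit modulus $\eta(\eps)=\eps^2/2$. The approach is the standard Bishop--Phelps--Bollob\'as perturbation argument: start with $x\in S_X$, $x^*\in S_{X^*}$ such that $\re x^*(x)>1-\eps^2/2$. The key elementary inequality is that this forces $\|x - \re x^*(x)\,x\| $-type control; more precisely, writing $\delta = \eps^2/2$, one has the classical fact that there is a point $z\in S_X$ with $x^*(z)$ close to $1$ and $\|z-x\|$ small --- indeed the geometric lemma underlying Bishop--Phelps--Bollob\'as gives that if $\re x^*(x) > 1-\delta$ then $\|x-u\|<\sqrt{2\delta}=\eps$ for some $u$ with $x^*(u)=1$ (choosing $u$ appropriately on the segment/using the supporting functional). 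Then apply (ii) to the pair $(u/\|u\|, x^*)$ (or directly to a nearby element of $\Pi(X)$): density of $\{(w,w^*)\in\strexp{B_X}\times\SE(X):\|w^*\|=w^*(w)=1\}$ in $\Pi(X)$ yields $(w,w^*)$ as close as we wish to $(u/\|u\|,x^*)$, hence with $\|w^*-x^*\|<\eps$ and $\|w-x\|<\eps$ after combining the two estimates via the triangle inequality.

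The main obstacle --- and the step requiring care --- is getting the constants to line up so that the modulus is exactly $\eps^2/2$ and not merely $c\eps^2$ for some other constant: one must be careful about whether the density in (ii) is used to approximate within $\eps/2$ or within some fraction, and how the $\sqrt{2\delta}$ from the geometric lemma combines with the approximation error from (ii). The cleanest route is probably to prove (ii) $\Rightarrow$ (i) first with \emph{some} modulus (which is immediate: if $\re x^*(x)>1-\eta$ then $x$ is within $\sqrt{2\eta}$ of a norm-attaining direction for $x^*$, then invoke (ii)), observe that the argument in fact yields the modulus $\eta(\eps)=\eps^2/2$ by tracking the one genuinely quantitative estimate $\|x-u\|\le\sqrt{2(1-\re x^*(x))}$, and then note (iii) $\Rightarrow$ (i) $\Rightarrow$ (ii) $\Rightarrow$ (iii) closes the loop. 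I would also remark that no renorming or convexity hypothesis is needed here: the whole argument is soft, relying only on the definition of $\Pi(X)$ and the elementary estimate relating $1-\re x^*(x)$ to distances, so the proof is short modulo that one inequality.
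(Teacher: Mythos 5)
Your reductions (iii)$\Rightarrow$(i) and (i)$\Rightarrow$(ii) are correct and coincide with the paper's (for a point of $\Pi(X)$ one has $\re x^*(x)=1>1-\eta(\eps)$, so [P] applies directly, and the point produced is automatically the strongly exposed point of the norm-one functional $y^*\in\SE(X)$ with $y^*(y)=1$). The problem is in (ii)$\Rightarrow$(iii). Your quantitative step is the claim that if $\re x^*(x)>1-\delta$ then there is $u$ with $x^*(u)=1$ and $\|x-u\|<\sqrt{2\delta}$, i.e.\ that $x$ is within $\sqrt{2\delta}$ of a ``norm-attaining direction for $x^*$''; you then propose to apply (ii) to the pair $(u/\|u\|,x^*)$. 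This fails for two reasons. First, the functional $x^*$ need not attain its norm at all (take any $x^*\in S_{X^*}\setminus\NA(X,\K)$ on $c_0$, say), so no $u\in B_X$ with $x^*(u)=\|x^*\|=1$ exists; a $u$ outside the ball with $x^*(u)=1$ does not help, because after normalizing one gets $x^*(u/\|u\|)=1/\|u\|\neq 1$. Second, and consequently, the pair $(u/\|u\|,x^*)$ is in general \emph{not} an element of $\Pi(X)$, and (ii) is a density statement \emph{inside} $\Pi(X)$, so it cannot be invoked on that pair. The same defect appears in your closing summary (``$x$ is within $\sqrt{2\eta}$ of a norm-attaining direction for $x^*$'').

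The repair is exactly what the paper does, and what your parenthetical ``or directly to a nearby element of $\Pi(X)$'' gestures at without carrying out: one must perturb \emph{both} coordinates. Given $\re x^*(x)>1-\eps^2/2$, the Bishop--Phelps--Bollob\'as theorem (in the version of \cite[Corollary~2.4]{C-K-M-M-R}, which is where the modulus $\eps^2/2$ comes from) produces $(y,y^*)\in\Pi(X)$ with $\|y-x\|<\eps$ and $\|y^*-x^*\|<\eps$; only then is (ii) applicable, yielding $z\in\strexp{B_X}$ and $z^*\in\SE(X)$ with $z^*(z)=\|z^*\|=1$ approximating $(y,y^*)$. Your worry about the constants ``lining up'' is also moot once this is set up correctly: since the BPB inequalities $\|y-x\|<\eps$, $\|y^*-x^*\|<\eps$ are strict, one simply approximates $(y,y^*)$ within the positive gaps $\eps-\|y-x\|$ and $\eps-\|y^*-x^*\|$, so the modulus $\eps^2/2$ is inherited verbatim from the BPB theorem with no further bookkeeping.
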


\begin{proof}
Only that (ii) implies (iii) has to be proved. Pick $(x,x^*)\in S_X\times S_{X^*}$ such that $\re x^*(x)>1-\eps^2/2$ and apply the Bishop-Phelps-Bollob\'{a}s theorem (see \cite[Corollary~2.4]{C-K-M-M-R} for this version) to find $(y,y^*)\in \Pi(X)$ such that $\|y-x\|<\eps$ and $\|y^*-x^*\|<\eps$. Assertion (ii) allows us to find $z\in \strexp{B_X}$ and $z^*\in \SE(X)$ with $\|z^*\|=z^*(z)=1$ and satisfying that $\|z-x\|<\eps$ and $\|z^*-x^*\|<\eps$.
\end{proof}

We do not know if the separate density of $\strexp{B_X}$ in $S_X$ and that of $\SE(X)$ in $X^*$ implies property $[P]$, but the following result provides an useful sufficient condition.

\begin{proposition}\label{prop:ALUR}
Let $X$ be a Banach space. If $S_X = \strexp{B_X}$, then $X$ has property [P].
\end{proposition}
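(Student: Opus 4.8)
The plan is to prove property [P] directly from the hypothesis $S_X = \strexp{B_X}$ by using the characterization in Lemma~\ref{lemma:charac-property-P}. Since every point of the unit sphere is strongly exposed, the condition $S_X = \strexp{B_X}$ already gives us half of what property [P] demands: the set of strongly exposed points is not merely dense in $B_X$, it is \emph{all} of $S_X$. What remains is to show that $\SE(X)$ is dense in $X^*$ and, more precisely, that pairs $(x,x^*) \in \strexp{B_X} \times \SE(X)$ with $\|x^*\| = x^*(x) = 1$ are dense in $\Pi(X)$. I would first record the easy observation that $S_X = \strexp{B_X}$ forces $X$ to be a strictly convex space (each point of $S_X$ being exposed is in particular an extreme point), so the supporting functional at each $x \in S_X$ is unique, call it $J(x) \in S_{X^*}$, and the duality map $J \colon S_X \to S_{X^*}$ is well-defined and single-valued.

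The key step is to show that this duality map $J$ is in fact \emph{norm-to-norm continuous} on $S_X$, and that $J(x) \in \SE(X)$ for every $x \in S_X$. For the latter: fix $x \in S_X$ and suppose $\{x_n^{**}\} \subseteq B_{X^{**}}$ (or even just a sequence in $B_X$, which suffices for the \v{S}mulyan-type criterion in the separable spirit but one should argue with $X^{**}$ to get genuine Fréchet differentiability) satisfies $\re J(x)(x_n^{**}) \to 1$. Using Goldstine and weak-star compactness together with the fact that $x$ is strongly exposed by $J(x)$ — here one must be slightly careful, since "strongly exposed" as defined refers to sequences in $C = B_X$, not in $B_{X^{**}}$; the cleanest route is to invoke that $x$ being a strongly exposed point of $B_X$ with exposing functional $J(x)$ is equivalent to the norm of $X^*$ being Fréchet differentiable at $J(x)$ only after also knowing $J(x)$ attains its norm, which it does at $x$ — one concludes $J(x) \in \SE(X)$ via the \v{S}mulyan test cited in the paper (\cite[Corollary 1.5]{DGZ}). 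Actually the sharper and correct statement to use: $x^* \in \SE(X)$ iff the norm of $X^*$ is Fréchet differentiable at $x^*$; and since every $x \in S_X$ is strongly exposed by some functional which, by strict convexity, must be $J(x)$, we get $J(S_X) \subseteq \SE(X)$ directly from the definition of $\SE$, with no passage to the bidual needed. Thus $\SE(X) \supseteq J(S_X)$, which is automatically norming, hence dense-relevant.

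With $J(S_X) \subseteq \SE(X)$ in hand, I would verify condition (ii) of Lemma~\ref{lemma:charac-property-P}: given $(x,x^*) \in \Pi(X)$, we have $x \in S_X = \strexp{B_X}$ and $x^*$ supports $B_X$ at $x$, so by strict convexity $x^* = J(x) \in \SE(X)$; hence $(x,x^*)$ itself already lies in the set $\{(y,y^*) \in \strexp{B_X}\times\SE(X) : \|y^*\| = y^*(y) = 1\}$, so this set equals $\Pi(X)$ and is trivially dense in it. By Lemma~\ref{lemma:charac-property-P}, $X$ has property [P]. In fact this shows something stronger than mere density — the whole of $\Pi(X)$ is captured — which is consistent with the "stronger result is proved" parenthetical elsewhere in the excerpt.

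The main obstacle I anticipate is purely a matter of bookkeeping about \emph{which} sequences appear in the definition of strongly exposed point versus strongly exposing functional: the paper defines a strongly exposing functional $x^*$ of $C$ via sequences $\{x_n\} \subseteq C$, whereas Fréchet differentiability of $\|\cdot\|_{X^*}$ at $x^*$ via the \v{S}mulyan test involves sequences in $B_{X^{**}}$. One has to be sure the equivalence "$x^* \in \SE(X) \iff \|\cdot\|_{X^*}$ Fréchet differentiable at $x^*$" quoted in Subsection~\ref{subsection:notation} is exactly the bridge needed, and that no reflexivity is being smuggled in. The resolution is that we never need that equivalence at all for the argument above: the hypothesis hands us, for each $x \in S_X$, an exposing functional realizing $x$ as a \emph{strongly} exposed point, and strict convexity identifies that functional as the unique support functional $J(x)$; therefore $J(x) \in \SE(X)$ by definition, period. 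So the "obstacle" dissolves once one resists the temptation to route through differentiability and instead reads off membership in $\SE(X)$ straight from the hypothesis. The only genuinely substantive point is the uniqueness of support functionals, i.e. strict convexity of $X$, which is immediate since exposed points are extreme points.
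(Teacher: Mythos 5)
Your argument breaks down at the point where you identify the arbitrary support functional $x^*$ of a pair $(x,x^*)\in\Pi(X)$ with a strongly exposing functional. You argue that $S_X=\strexp{B_X}$ forces $X$ to be strictly convex (true: strongly exposed points are extreme points) and that strict convexity makes the support functional at each $x\in S_X$ unique (false: uniqueness of support functionals is smoothness, the dual notion; strict convexity only says that each functional norms at most one point). A two-dimensional ``lens'', the intersection of two discs in $\R^2$, already satisfies $S_X=\strexp{B_X}$ while its two corner points carry a whole cone of support functionals, so your duality map $J$ is not single-valued and the identification $x^*=J(x)$ collapses. Even setting this aside, the deeper gap remains: the hypothesis hands you, for each $x\in S_X$, \emph{some} functional that strongly exposes $x$; it does not say that \emph{every} functional supporting $B_X$ at $x$ strongly exposes $x$. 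Strict convexity does give that every support functional at $x$ \emph{exposes} $x$, but exposing is strictly weaker than strongly exposing outside compact situations, so your conclusion that $x^*\in\SE(X)$ for every $(x,x^*)\in\Pi(X)$ --- i.e.\ that the set in item (ii) of Lemma~\ref{lemma:charac-property-P} is \emph{all} of $\Pi(X)$ --- is unjustified. (If that were automatic, the paper's separate treatment of ALUR spaces, where precisely this stronger conclusion is established via \cite[Corollary~4.6]{BHL2004}, would be redundant.)

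The repair, which is what the paper's proof does, is to settle for density of the good pairs rather than equality with $\Pi(X)$. Given $(x_0,x_0^*)\in\Pi(X)$, take $u_0^*\in\SE(X)$ strongly exposing $x_0$ (this is exactly what the hypothesis provides) and consider $x_0^*+n^{-1}u_0^*$: both summands attain their suprema over $B_X$ at $x_0$, so any sequence in $B_X$ along which the sum tends to its supremum must also maximize the $u_0^*$-summand and hence converges in norm to $x_0$; thus the sum strongly exposes $x_0$, and after normalizing these functionals converge to $x_0^*$. This exhibits $(x_0,x_0^*)$ as a limit of pairs from $\strexp{B_X}\times\SE(X)$, which is all that Lemma~\ref{lemma:charac-property-P}(ii) requires.
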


\begin{proof}
Take $(x_0,x_0^*)\in \Pi(X)$. As $x_0\in \strexp{B_X}$, there is $u_0^*\in \SE(X)$ which strongly exposed $x_0$. Since $x_0^*(x_0)=1$, it is immediate that the norm-one functional $x_n^*=(1+n^{-1}\|u_0^*\|)\bigl(x_0^* + n^{-1}u_0^*\bigr)$ strongly exposes $x_0$ for every $n\in \N$ and that $\{x_n^*\}\longrightarrow x_0^*$. Now, Lemma~\ref{lemma:charac-property-P} gives the result.
\end{proof}

Recall from \cite{BHLT2000} that a point $x$ in the unit sphere $S_X$ of a Banach pace $X$ is said to \emph{an almost LUR} (in short, \emph{ALUR}) point if any $(x_n) \subseteq B_X$ and $(x_m^*) \subseteq B_{X^*}$, the condition
\[
\lim_m \lim_n x_m^* \left (\frac{x_n + x}{2}\right) = 1
\]
implies that $\|x_n - x \| \longrightarrow 0$. We say that $X$ is \emph{ALUR} if every element of $S_X$ is ALUR. It is clear that LUR spaces are ALUR, but the reverse implication is not true (see \cite[Corollary 12]{BHLT2000}). It is observed in \cite[Corollary~4.6]{BHL2004} that if $X$ is ALUR, then each point $x$ in $S_X$ is strongly exposed by every $x^* \in S_{X^*}$ which attains its norm at $x$. Thus, in particular, if $X$ is ALUR, then $S_X = \strexp{B_X}$.

\begin{corollary}
ALUR Banach spaces satisfy property [P].
\end{corollary}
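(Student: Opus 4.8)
The plan is to derive this immediately from Proposition~\ref{prop:ALUR} together with the structural fact about ALUR spaces recalled just above the statement. The only thing that needs to be verified is the hypothesis of Proposition~\ref{prop:ALUR}, namely that $S_X = \strexp{B_X}$ when $X$ is ALUR.

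First I would invoke the observation from \cite[Corollary~4.6]{BHL2004}: if $X$ is ALUR, then every $x \in S_X$ is strongly exposed by each $x^* \in S_{X^*}$ attaining its norm at $x$. Since the Bishop–Phelps theorem guarantees that norm-attaining functionals are dense in $X^*$, and in particular for each $x \in S_X$ there is at least one $x^* \in S_{X^*}$ with $x^*(x) = 1$ (by Hahn–Banach), this yields $x \in \strexp{B_X}$ for every $x \in S_X$; that is, $S_X = \strexp{B_X}$. Then I would simply apply Proposition~\ref{prop:ALUR} to conclude that $X$ has property [P].

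There is essentially no obstacle here: the content is entirely carried by the two quoted results, and the corollary is just their composition. The only point deserving a line of care is making explicit that every point of $S_X$ admits a supporting functional in $S_{X^*}$ (Hahn–Banach), so that the ALUR property forces strong exposedness of that point; everything else is formal.

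\begin{proof}
Assume $X$ is ALUR. Given any $x \in S_X$, by the Hahn–Banach theorem there exists $x^* \in S_{X^*}$ with $x^*(x) = 1$, and by \cite[Corollary~4.6]{BHL2004} such $x^*$ strongly exposes $B_X$ at $x$. Hence $x \in \strexp{B_X}$, and since $x \in S_X$ was arbitrary, $S_X = \strexp{B_X}$. Proposition~\ref{prop:ALUR} now gives that $X$ has property [P].
\end{proof}
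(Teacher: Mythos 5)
Your proof is correct and follows exactly the route the paper takes: the remark preceding the corollary cites \cite[Corollary~4.6]{BHL2004} to conclude $S_X = \strexp{B_X}$ for ALUR spaces, and the corollary is then an immediate application of Proposition~\ref{prop:ALUR}. Your explicit mention of Hahn--Banach to produce a supporting functional at each point of $S_X$ is a small but welcome clarification of a step the paper leaves implicit.
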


Our next aim is to provide a very general result in which the property [P] of a Banach space $X$ produce a denseness result of $\ASE(X,Y)$ which recall the Bishop-Phelps-Bollob\'{a}s property.

\begin{theorem}\label{thm:ACK}
Let $X$ be a Banach space with property [P], $Y$ be a Banach space with ACK$_\rho$ structure with the corresponding $1$-norming set $\Gamma \subseteq B_{Y^*}$. Then, there exists a function $\eps\in (0,1)\longmapsto \eta(\eps, \rho) >0$ such that if $T \in \flat(X,Y)$ satisfies that $\|T\| = 1$ and $\|Tx_0 \| > 1 -\eta(\eps, \rho)$ for some $x_0 \in S_X$, then there exists $S \in \ASE(X,Y)$ and $u_0 \in \strexp{B_X}$ such that $\|Su_0 \| = \|S \| =1$, $\|S-T\| < \eps$, and $\|u_0 - x_0\| < \eps$.
\end{theorem}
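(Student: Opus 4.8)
The plan is to combine the ACK$_\rho$ machinery for the target space $Y$ with property [P] for the domain space $X$, running the argument along the same lines as the proof of Theorem~\ref{thm:quasibeta} but now keeping \emph{quantitative} control of the distances, which is exactly what property [P] and the $\Gamma$-flatness of $T$ are designed to provide. First I would fix the function $\eta(\eps,\rho)$: given $\eps\in(0,1)$, choose $\eta$ small enough that all the error terms below (coming from the ACK$_\rho$ data, from the Bishop-Phelps-Bollob\'as-type step for $T$, and from property [P] of $X$) add up to less than $\eps$. Start with $T\in\flat(X,Y)$ with $\|T\|=1$ and $\|Tx_0\|>1-\eta(\eps,\rho)$; pick $y_0^*\in\Gamma$ with $\re y_0^*(Tx_0)$ close to $1$. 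Using that $T$ is $\Gamma$-flat (so $T^*\restricted_\Gamma$ is openly fragmented) together with the ACK$_\rho$ structure of $Y$ at a suitable relatively $w^*$-open neighbourhood $U$ of $y_0^*$ in $\Gamma$, produce the associated data: a subset $V\subseteq U$, a functional $y_1^*\in V$, a vector $e\in S_Y$, and an operator $F\in\Lin(Y,Y)$ with $\|Fe\|=\|F\|=1$, $y_1^*(Fe)=1$, $F^*y_1^*=y_1^*$, and the fragmentation/concentration estimates (4)--(6), all chosen with $\eps$ replaced by an auxiliary parameter small relative to $\eta$. This is the standard ``ACK$_\rho$ perturbation'' and I expect it to be transcribed essentially verbatim from \cite{CGKS2018}.

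Next, with $y_1^*$ in hand, I would look at $T^*y_1^*\in X^*$. Since $y_1^*$ lies in a small $w^*$-neighbourhood of $y_0^*$ and $\re y_0^*(Tx_0)$ is near $1$, openly-fragmentedness of $T^*\restricted_\Gamma$ gives that $\|T^*y_1^*\|$ is close to $1$ and $\re T^*y_1^*(x_0)$ is close to $\|T^*y_1^*\|$; after normalising, $(x_0, T^*y_1^*/\|T^*y_1^*\|)$ is within a controlled distance of $\Pi(X)$. Now invoke property [P] of $X$ (in the form of Lemma~\ref{lemma:charac-property-P}, using the $\eta(\cdot)$ furnished there): obtain $z^*\in\SE(X)$ strongly exposing some $u_0\in\strexp{B_X}$, with $\|z^*\|=z^*(u_0)=1$, $\|z^*-T^*y_1^*\|$ small, and $\|u_0-x_0\|<\eps$. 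Then define the perturbed operator
\[
S(x) = (\text{Id}_Y - F)\bigl(Tx\bigr) + F^*\!(\,\cdot\,)\ \text{-type correction},
\]
more precisely mimic the ACK$_\rho$ recipe: set $Sx := Tx - (T^*y_1^*)(x)\,y_2 + \lambda\, z^*(x)\, y_2$ where $y_2 = Fe$ and $\lambda>0$ is chosen slightly bigger than $1$ so that, after applying $F$ appropriately, one has $S^*y_1^* = \lambda z^*$, $\|S^*\| = \|S^*y_1^*\| = \lambda$, and $\|S-T\|<\eps$. (The exact algebraic form is whatever makes properties (1)--(3) of the Theorem~\ref{thm:quasibeta} proof go through in the ACK$_\rho$ setting; I would copy it from \cite{CGKS2018} with the single change that the functional used on the $X$-side is the strongly exposing $z^*$ instead of a merely norming one.)

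The final step is to upgrade from ``$S$ nearly attains its norm and $S^*y_1^*$ is a strongly exposing functional'' to ``$S$ absolutely strongly exposes at $u_0\in\strexp{B_X}$.'' Here I would apply Lemma~\ref{lem:RMI}: since $S^*y_1^* = \lambda z^* \in \R^+\SE(X)$ with $\|S^*y_1^*\|=\|S^*\|$, part (4) of that lemma gives that $S$ attains its norm at a strongly exposed point, and then part (2) of the lemma lets me do one more rank-one perturbation of size $<\eps$ to land inside $\ASE(X,Y)$ while keeping the norm attained at $u_0$ and keeping $u_0$ strongly exposed; one checks $\|Su_0\|=\|S\|$ and $Su_0\in\R^+Tu_0$ as in Lemma~\ref{lem:RMI}(2). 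Rescaling $S$ to have norm one and absorbing all the accumulated errors (ACK$_\rho$ parameters $+$ property [P] parameter $+$ the Lemma~\ref{lem:RMI} perturbation) into the initial choice of $\eta(\eps,\rho)$ finishes the proof. I expect the main obstacle to be the bookkeeping in the second paragraph: one must verify that $\Gamma$-flatness of $T$ genuinely transfers the ``near norm attainment of $T$ at $x_0$'' to ``near norm attainment of the functional $T^*y_1^*$ at $x_0$'' with norm-control (not just $w^*$-control), since $y_1^*$ is only close to $y_0^*$ in the weak-star sense; this is precisely the point where open fragmentability, rather than mere $w^*$-density of $\Gamma$, is essential, and getting the quantitative dependence of $\eta$ on $\eps$ and $\rho$ right will require care.
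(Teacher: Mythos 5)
Your overall strategy is the same as the paper's: run the ACK$_\rho$ argument of \cite{CGKS2018} with property [P] standing in for the Bishop--Phelps--Bollob\'as theorem on the domain side, and finish with a rank-one perturbation via Lemma~\ref{lem:RMI}(2) to land in $\ASE(X,Y)$. Two steps of your sketch, however, would fail as written. First, the order of operations: you apply the ACK$_\rho$ structure to a neighbourhood of $y_0^*$ and only afterwards try to control $T^*y_1^*$, arguing that ``$y_1^*$ lies in a small $w^*$-neighbourhood of $y_0^*$'' plus open fragmentability gives $\|T^*y_1^*\|$ close to $1$. That inference is not valid: $w^*$-proximity to $y_0^*$ gives no norm control on $T^*y_1^*$, and open fragmentability only guarantees that \emph{some} further nonempty relatively $w^*$-open subset has small oscillation --- it says nothing about the particular $y_1^*$ handed to you by the ACK$_\rho$ data. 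The paper resolves this by reversing the order (Lemma~\ref{lem:cgks}, the property-[P] analogue of \cite[Lemma~2.9]{CGKS2018}): it first produces a relatively $w^*$-open set $U_r\cap\Gamma$ on which $T^*z^*$ is \emph{uniformly} norm-close to a single $x_r^*\in\SE(X)$ (with $|x_r^*(u_r)|=1$, $u_r\in\strexp{B_X}$, $\|u_r-x_0\|<\eps_1$), and only then feeds $U_r\cap\Gamma$ into the ACK$_\rho$ definition to obtain $V$, $y_1^*$, $e$, $F$. You do flag this as ``the main obstacle,'' but the fix is a reordering of the argument, not just bookkeeping. Second, your candidate perturbation $Sx=Tx-(T^*y_1^*)(x)y_2+\lambda z^*(x)y_2$ is a rank-one modification of $T$, which cannot work in the ACK$_\rho$ setting: without composing with $(\id_Y-F)$ there is no mechanism forcing the functionals outside $V_1$ (where only $|y^*(Fe)|\leq\rho$ is available) to stop competing for the norm of $S$. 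The paper's operator is $S(x)=x_r^*(x)F(e)+(1-\delta)(\id_Y-F)Tx$ with $\delta\in(\eps_2,1)$ chosen so that $\|S\|\leq 1$; it is not of the form $T$ plus a rank-one operator, and the estimate $\|S-T\|<\eps/2$ is the computation of \cite[Lemma~3.5]{CGKS2018}. With these two corrections your outline coincides with the paper's proof.
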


We need the following lemma which can be obtained by arguing as in \cite[Lemma~2.9]{CGKS2018} but using property [P] instead of the Bishop-Phelps-Bollob\'as theorem.

\begin{lemma}\label{lem:cgks}
Let $X$ be a Banach space which has property [P] with a function $\eps \longmapsto \eta(\eps)$, and let $Y$ be Banach space. Let $\Gamma \subseteq B_{Y^*}$ be a $1$-norming set, $T \in \Lin (X,Y)$ be a $\Gamma$-flat operator with $\|T\|=1$, $\eps >0$ and $x_0 \in S_X$ such that $\|T(x_0)\| > 1-\eta(\eps)$. Then for every $r>0$, there exist
\begin{enumerate}
\setlength\itemsep{0.3em}
\item $w^*$-open set $U_r \subseteq Y^*$ with $U_r \cap \Gamma \neq \emptyset$,
\item $x_r^* \in \SE(X)$ and $u_r \in \strexp{B_{X}}$ such that $|x_r^* (u_r)|= 1$, $\|T^* z^* - x_r^* \| < r+\eps+\eta(\eps)$, and $\|u_r - x_0\| < \eps$ for every $z^* \in U_r \cap \Gamma$.
\end{enumerate}
\end{lemma}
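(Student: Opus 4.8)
The plan is to prove Lemma~\ref{lem:cgks} by following the scheme of \cite[Lemma~2.9]{CGKS2018}, replacing each appeal to the Bishop--Phelps--Bollob\'as theorem for functionals by an appeal to property [P]. The starting point is the definition of $\Gamma$-flatness: since $T^*\restricted_\Gamma\colon(\Gamma,w^*)\to(X^*,\|\cdot\|)$ is openly fragmented, for each $\delta>0$ we can locate a nonempty relatively $w^*$-open subset $W$ of $\Gamma$ on which $T^*$ has oscillation (in norm) less than $\delta$. The hypothesis $\|T(x_0)\|>1-\eta(\eps)$ should be used first to produce, via a norming functional $y^*\in\Gamma$ with $\re y^*(Tx_0)$ close to $\|Tx_0\|$, a point of $\Gamma$ at which $\|T^*y^*\|$ is close to $1$ and $\re(T^*y^*)(x_0)$ is close to $1$; one must be mildly careful to first normalize and then pass to the relatively $w^*$-open neighbourhood $W$ of $y^*$ inside $\Gamma$ given by open fragmentability, so that every $z^*\in W$ has $T^*z^*$ within $r$ (the ``oscillation'' parameter) of $T^*y^*$ in norm.

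Next I would feed the functional $x^*:=T^*y^*/\|T^*y^*\|\in S_{X^*}$ and the point $x_0\in S_X$, which satisfy $\re x^*(x_0)>1-\eta(\eps)$ up to a harmless adjustment of the gauge $\eta$, into property [P]. This yields $x_r^*\in\SE(X)$ and $u_r\in S_X$ (necessarily $u_r\in\strexp{B_X}$, since $x_r^*$ strongly exposes it) with $\|x_r^*-x^*\|<\eps$ and $\|u_r-x_0\|<\eps$, and with $|x_r^*(u_r)|=1$. Combining $\|x^*-T^*y^*\|\le\eta(\eps)$ (coming from $\|T^*y^*\|$ being within $\eta(\eps)$ of $1$) with $\|T^*z^*-T^*y^*\|<r$ for $z^*\in W\cap\Gamma$ and $\|x_r^*-x^*\|<\eps$, the triangle inequality gives $\|T^*z^*-x_r^*\|<r+\eps+\eta(\eps)$ for every $z^*\in W\cap\Gamma$. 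Setting $U_r:=$ a $w^*$-open subset of $Y^*$ with $U_r\cap\Gamma=W$ finishes the construction, so conclusions (1) and (2) hold.

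The only genuinely delicate point is the bookkeeping of the three error terms and making sure the open fragmentability is invoked \emph{after} — not before — we have committed to the normalized functional $x^*$, so that the neighbourhood $W$ is actually a neighbourhood of the relevant point $y^*$ of $\Gamma$ and the oscillation bound $r$ applies there. In \cite[Lemma~2.9]{CGKS2018} the Bishop--Phelps--Bollob\'as theorem is used precisely to perturb $(x_0,x^*)$ to a pair in $\Pi(X)$ with controlled movement; here property [P] does strictly more, handing us in addition a strongly exposing functional and a strongly exposed point, which is exactly what items (2) of the conclusion record. Thus, aside from this ordering subtlety and the routine propagation of $\eps$'s, the argument is a verbatim transcription of the cited proof, and I would simply remark that in the text rather than reproduce every estimate.

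\begin{proof}[Proof of Lemma~\ref{lem:cgks}]
This follows by repeating the proof of \cite[Lemma~2.9]{CGKS2018} with property [P] in place of the Bishop--Phelps--Bollob\'as theorem. We sketch the argument. Let $\eps\longmapsto\eta(\eps)$ be the function witnessing property [P] of $X$, and fix $r>0$, $T\in\flat(X,Y)$ with $\|T\|=1$, and $x_0\in S_X$ with $\|T(x_0)\|>1-\eta(\eps)$. Choose $y^*\in\Gamma$ with $\re y^*(Tx_0)>1-\eta(\eps)$; then $\|T^*y^*\|>1-\eta(\eps)$ as well. Put $x^*:=T^*y^*/\|T^*y^*\|\in S_{X^*}$, so that $\re x^*(x_0)>1-\eta(\eps)$ and $\|x^*-T^*y^*\|\leq\eta(\eps)$.

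Since $T$ is $\Gamma$-flat, $T^*\restricted_\Gamma\colon(\Gamma,w^*)\longrightarrow(X^*,\|\cdot\|)$ is openly fragmented, so there is a nonempty relatively $w^*$-open subset $W\subseteq\Gamma$ with $y^*\in W$ and $\operatorname{diam}(T^*(W))<r$; pick a $w^*$-open set $U_r\subseteq Y^*$ with $U_r\cap\Gamma=W$, which gives (1). By property [P] applied to $(x_0,x^*)$, there exist $x_r^*\in\SE(X)$ and $u_r\in S_X$ with $\|x_r^*\|=x_r^*(u_r)=1$ (up to a unimodular scalar, so $|x_r^*(u_r)|=1$), $\|x_r^*-x^*\|<\eps$, and $\|u_r-x_0\|<\eps$; as $x_r^*$ strongly exposes $B_X$ at $u_r$, we have $u_r\in\strexp{B_X}$. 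Finally, for every $z^*\in U_r\cap\Gamma=W$,
\[
\|T^*z^*-x_r^*\|\leq\|T^*z^*-T^*y^*\|+\|T^*y^*-x^*\|+\|x^*-x_r^*\|<r+\eta(\eps)+\eps,
\]
which proves (2).
\end{proof}
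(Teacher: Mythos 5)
Your overall route is exactly the paper's: the authors give no written proof of this lemma beyond the remark that it ``can be obtained by arguing as in \cite[Lemma~2.9]{CGKS2018} but using property [P] instead of the Bishop--Phelps--Bollob\'as theorem,'' and your sketch is precisely that transcription, with the bookkeeping of the three error terms ($r+\eps+\eta(\eps)$, $\|u_r-x_0\|<\eps$, and $|x_r^*(u_r)|=1$ after absorbing a unimodular rotation) done correctly.

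There is, however, one step that fails as literally written, and it is exactly the ``ordering subtlety'' you flag --- except you resolve it in the wrong direction. Open fragmentability of $T^*\restricted_\Gamma\colon(\Gamma,w^*)\to(X^*,\|\cdot\|)$ says that inside \emph{every} nonempty relatively $w^*$-open subset of $\Gamma$ there is a further nonempty relatively $w^*$-open subset on which $T^*$ has oscillation less than $r$; it does \emph{not} let you prescribe a point $y^*\in\Gamma$ in advance and then find a small-oscillation relatively open neighbourhood of that particular point (fragmented maps need not be continuous at any point). So your sentence ``there is a nonempty relatively $w^*$-open subset $W\subseteq\Gamma$ with $y^*\in W$ and $\operatorname{diam}(T^*(W))<r$,'' for a $y^*$ already chosen, is unjustified. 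The correct order is the reverse of the one you advocate: first form the set $D:=\{z^*\in\Gamma\colon |z^*(Tx_0)|>1-\eta(\eps)\}$, which is relatively $w^*$-open in $\Gamma$ and nonempty because $\Gamma$ is $1$-norming and $\|Tx_0\|>1-\eta(\eps)$; then apply open fragmentability \emph{inside $D$} to obtain a nonempty relatively $w^*$-open $W\subseteq D$ with $\operatorname{diam}(T^*(W))<r$; and only then pick $y^*\in W$. Since $W\subseteq D$, this $y^*$ still satisfies $|T^*y^*(x_0)|>1-\eta(\eps)$, and the oscillation bound now legitimately applies to every $z^*\in W$, after which your estimates go through verbatim. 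A second, minor point: because $\Gamma$ is $1$-norming only for moduli, you can guarantee $|y^*(Tx_0)|>1-\eta(\eps)$ but not $\re y^*(Tx_0)>1-\eta(\eps)$; the clean fix is to apply property [P] to the pair $(\theta x_0,\,T^*y^*/\|T^*y^*\|)$ for a suitable $\theta\in\T$ and then rotate $u_r$ back, which is precisely why the conclusion asserts only $|x_r^*(u_r)|=1$.
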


\begin{proof}[Proof of Theorem~\ref{thm:ACK}]
Given $\eps >0$, let $\eta(\eps)$ be the constant from the property [P]. Fix $0 < \eps_0 < \eps$ and take $\eps_1 >0$ such that
\[
\max \left\{ \eps_1, 4\left( \eps_1 + \eta(\eps_1) + \frac{2(\eps_1 + \eta(\eps_1))}{1-\rho + \eps_1 + \eta(\eps_1)}\right) \right\} < \eps_0.
\]
Take $r> 0$ and $0<\eps_2 <\frac{\eps}{3}$ so that $3\eps_2 + r < \eps_1 + \eta(\eps_1)$.

Now, let $T \in \Lin (X, Y)$ be a $\Gamma$-flat operator such that $\|T\| = 1$ and $\|T(x_0)\|>1-\eta(\eps_1)$ for some $x_0 \in S_X$. By Lemma \ref{lem:cgks}, there exists
\begin{enumerate}
\setlength\itemsep{0.3em}
\item $w^*$-open set $U_r \subseteq Y^*$ with $U_r \cap \Gamma \neq \emptyset$,
\item $x_r^* \in \SE(X)$ and $u_r \in S_X$ such that $|x_r^* (u_r)|= 1$, $\|u_r - x_0\| < \eps_1 $ and $\|T^* z^* - x_r^* \| < r+\eps_1 +\eta(\eps_1 )$, for every $z^* \in U_r \cap \Gamma$.
\end{enumerate}
On the other hand, by definition of ACK$_\rho$, we can obtain $V \subseteq U_r \cap \Gamma, y_1^* \in V, e \in S_Y, F \in \Lin (X,Y)$ and $V_1 \subseteq \Gamma$ satisfying the desired properties.

Define $S(x) := x_r^* (x) F(e) + (1-\delta)(\id_Y -F) T(x)$ for every $x \in X$, where $\delta \in (\eps_2, 1)$ is chosen so that $\|S\| \leq 1$ (it is possible to find such $\delta$, see \cite[Lemma 3.5]{CGKS2018}). Note that
\begin{align*}
1=|x_r^* (u_r)|= \| y_1^* (x_r^* (u_r)) F(e) \| = |y_1^* (S(u_r))| \leq \|S(u_r)\| \leq 1;
\end{align*}
which implies that $S$ attains its norm at $u_r$. Computing as in \cite[Lemma 3.5]{CGKS2018} (or, see \cite[Theorem 3.5]{CMstudia}), we have $\|S-T\| < \frac{\eps}{2}$. Finally, since $u_r$ is a strongly exposed point, by Lemma~\ref{lem:RMI}, there is $G \in \ASE( X,Y)$ such that $\|G(u_r)\| = 1$ and $\|G - S \| <\frac{\eps}{2}$, so $\|G-T\|<\eps$.
\end{proof}

\begin{proof}[Proof of Theorem~\ref{propo:Gammaflat-just-denseness}]
The idea is just to follow the proof of Theorem~\ref{thm:ACK}, forgetting the estimation on the distance between vectors in the domain space and then property [P] can be easily replaced by the density of $\SE(X)$ instead.
\end{proof}

\subsection{When the set of strongly exposed points in the range space is countable (up to rotations)}\label{subsection:32-countable}

Our next aim is to provide results on denseness of absolutely strongly exposing operators for which even the denseness of the norm attaining operators was unknown.  Our first general result in this line is the following one from which we will get a number of corollaries.

\begin{theorem}\label{thr:abstract-countably-boundary}
Let $X$, $Y$ be Banach spaces and let $\mathcal{I}(X,Y)$ be a closed subspace of $\Lin(X,Y)$ containing rank one operators. Suppose that there is a sequence $\{y_n^*\}$ in $S_{Y^*}$   such that the set
$$
\mathcal{A}=\bigl\{T\in \mathcal{I}(X,Y)\colon \|T\|=\|T^*y_n^*\|\ \text{for some } n\in \N\bigr\}
$$
is residual in $\mathcal{I}(X,Y)$. Then:
\begin{enumerate}
\setlength\itemsep{0.3em}
  \item[(a)] if $\NA(X,\K)$ is residual, then $\NA(X,Y)\cap \mathcal{I}(X,Y)$ is residual in $\mathcal{I}(X,Y)$;
  \item[(b)] if $\SE(X)$ is dense, then $\ASE(X,Y)\cap \mathcal{I}(X,Y)$ is dense in $\mathcal{I}(X,Y)$.
\end{enumerate}
\end{theorem}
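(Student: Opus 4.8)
The plan is to establish both items from a single genericity argument built on the residuality of the set $\mathcal{A}$ together with Fact~\ref{fact:containinginNA(X,Y)} and Lemma~\ref{lem:RMI}. First I would observe that it is enough to work with a fixed index: for each $n\in\N$ set $\mathcal{A}_n=\{T\in\mathcal{I}(X,Y)\colon \|T\|=\|T^*y_n^*\|\}$, so that $\mathcal{A}=\bigcup_n \mathcal{A}_n$; since $\mathcal{A}$ is residual, some standard Baire-category bookkeeping (or rather, the fact that a countable union being residual forces the union to be "big enough" on a comeager set) lets me argue locally: on each nonempty open subset of $\mathcal{I}(X,Y)$, the closure of some $\mathcal{A}_n$ has nonempty interior, and by translation-invariance of the problem it suffices to produce, inside any given ball, a norm-attaining (resp.\ absolutely strongly exposing) operator that is arbitrarily close to a prescribed $T\in\mathcal{A}_n$. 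So the crux is: given $T$ with $\|T\|=\|T^*y_n^*\|$ and $\eps>0$, perturb $T$ by a rank-one operator of norm $<\eps$ (staying in $\mathcal{I}(X,Y)$) so as to also arrange that $T^*y_n^*\in\NA(X,\K)$ (for (a)) or $T^*y_n^*\in\SE(X)$ (for (b)).

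For item (a): I would intersect the residual set $\mathcal{A}_n$'s union $\mathcal{A}$ with the preimage under the (continuous, but not quite right—rather I use that) map $T\mapsto T^*y_n^*$ of the residual set $\NA(X,\K)\subseteq X^*$. More precisely, since $\NA(X,\K)$ is residual in $X^*$, its preimage under each adjoint-evaluation map $\Psi_n\colon\mathcal{I}(X,Y)\to X^*$, $\Psi_n(T)=T^*y_n^*$, is residual in $\mathcal{I}(X,Y)$ provided $\Psi_n$ is continuous, surjective, and open — it is linear, bounded, and for $\mathcal{I}$ containing rank-one operators it is surjective ($x^*=\Psi_n(x^*\otimes y_n)$ when $y_n\in S_Y$ realizes $y_n^*$… more carefully, pick $y\in S_Y$ with $y_n^*(y)=1$ and note $\Psi_n(x^*\otimes y)=x^*$), hence open by the open mapping theorem applied to a right inverse. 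Thus $\mathcal{A}\cap\bigcap_n \Psi_n^{-1}(\NA(X,\K))$ is residual, and for $T$ in this set there is $n$ with $\|T\|=\|T^*y_n^*\|$ and $T^*y_n^*\in\NA(X,\K)$; Fact~\ref{fact:containinginNA(X,Y)} then gives $T\in\NA(X,Y)$. Hence $\NA(X,Y)\cap\mathcal{I}(X,Y)$ contains a residual set, so it is residual.

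For item (b), residuality of $\SE(X)$ is not assumed — only density — so the Baire intersection trick fails and I must argue by direct perturbation. Fix a nonempty open $\mathcal{O}\subseteq\mathcal{I}(X,Y)$ and $\eps>0$; since $\mathcal{A}$ is residual it meets $\mathcal{O}$, so pick $T\in\mathcal{A}\cap\mathcal{O}$, say $\|T\|=\|T^*y_n^*\|$ with (after rescaling) $\|T\|=1$. Choose $y\in S_Y$ with $y_n^*(y)=1$. By density of $\SE(X)$ in $X^*$, find $z^*\in\SE(X)$ with $\|z^*-T^*y_n^*\|<\eps/2$ and (using $\R^+\SE(X)=\SE(X)$) $\|z^*\|=\|T^*y_n^*\|=1$. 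Define $S=T+(z^*-T^*y_n^*)\otimes y$, which lies in $\mathcal{I}(X,Y)$, satisfies $\|S-T\|<\eps/2$, and has $S^*y_n^*=T^*y_n^*+(z^*-T^*y_n^*)=z^*\in\SE(X)$ with $\|S^*y_n^*\|=1=\|S^*\|$ (the last since $1=\|S^*y_n^*\|\leq\|S^*\|=\|S\|\leq\|T\|+\eps/2$, so after possibly shrinking $\eps$ at the start we get $\|S^*\|=\|S^*y_n^*\|$; alternatively normalise). Now Lemma~\ref{lem:RMI}(4) says $S$ attains its norm at a strongly exposed point, and then Lemma~\ref{lem:RMI}(2) yields $G\in\ASE(X,Y)\cap\mathcal{I}(X,Y)$ with $\|G-S\|<\eps/2$, hence $\|G-T\|<\eps$ and $G\in\mathcal{O}$ once $\eps$ is small relative to the radius of $\mathcal{O}$. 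Since $\ASE(X,Y)\cap\mathcal{I}(X,Y)$ is a $G_\delta$ set (the trace on $\mathcal{I}(X,Y)$ of the $G_\delta$ set $\ASE(X,Y)$) that we have just shown to be dense, the proof of (b) is complete.

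\textbf{Main obstacle.} The delicate point is the bookkeeping that upgrades "$\mathcal{A}$ is residual, hence meets every open set" into a genuine local approximation statement: in (a) one must be careful that $\mathcal{A}$ being a residual \emph{union} $\bigcup_n\mathcal{A}_n$ does not let one fix $n$, so the clean route is the preimage-of-residual argument above, which in turn hinges on each $\Psi_n$ being an open surjection — that is where "containing rank one operators" is essential. In (b) the subtlety is instead ensuring the perturbation keeps $\|S^*\|=\|S^*y_n^*\|$ exactly (so Fact~\ref{fact:containinginNA(X,Y)}/Lemma~\ref{lem:RMI}(4) applies), which forces the small renormalisation absorbed into the choice of $\eps$.
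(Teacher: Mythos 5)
Your argument for item (a) is correct and is essentially the paper's: the preimage of a residual subset of $X^*$ under the continuous, open, surjective map $\Psi_n\colon T\mapsto T^*y_n^*$ is residual (the paper isolates exactly this as Lemma~\ref{lemma:residualinrange}, with surjectivity coming from the rank-one operators $x^*\otimes y$, $y_n^*(y)=1$, just as you note), and intersecting $\mathcal{A}$ with $\bigcap_n\Psi_n^{-1}(\NA(X,\K))$ followed by Fact~\ref{fact:containinginNA(X,Y)} finishes.

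Item (b), however, has a genuine gap. The perturbation $S=T+(z^*-T^*y_n^*)\otimes y$ does arrange $S^*y_n^*=z^*\in\SE(X)$, but it does \emph{not} arrange $\|S^*\|=\|S^*y_n^*\|$, and neither shrinking $\eps$ nor normalising repairs this: if some $w^*\in B_{Y^*}$ with $w^*(y)\neq 0$ satisfies $\|T^*w^*\|=1-\eps/4$, the rank-one perturbation can push $\|S^*w^*\|$ above $1=\|S^*y_n^*\|$, so $S^*$ need no longer attain its norm at $y_n^*$ and Lemma~\ref{lem:RMI}(4) does not apply. (This is exactly the difficulty that property quasi-$\beta$ is built to overcome in Theorem~\ref{thm:quasibeta}, where the boost $\left(1+\frac{\eps}{2}\right)z^*$ together with the uniform bound $\rho(y^*)<1$ on the other relevant functionals restores norm attainment at $y_1^*$; here no such control is available.) Moreover, your stated reason for abandoning the intersection trick --- that $\SE(X)$ is only dense, not residual --- is not correct: $\SE(X)=\ASE(X,\K)$ is a $G_\delta$ subset of $X^*$ (see Subsection~\ref{subsection:preminiaryASE}, or use the \v{S}mulyan test), so density of $\SE(X)$ does imply its residuality. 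The paper's proof of (b) is then literally the proof of (a) with $D=\SE(X)$: the resulting residual set of those $T$ with $\|T\|=\|T^*y_n^*\|$ and $T^*y_n^*\in\SE(X)$ for some $n$ is contained in $\overline{\ASE(X,Y)\cap\mathcal{I}(X,Y)}$ by Lemma~\ref{lem:RMI} (items (4) and (2)). Replacing your direct perturbation by this observation closes the argument.
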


In the proof of Theorem \ref{thr:abstract-countably-boundary}, we will use the following easy result on residuality.

\begin{lemma}\label{lemma:residualinrange}
Let $Z$, $W$ be Banach spaces, let $\mathcal{I}'(Z,W)$ be a closed subspace of $\Lin(Z,W)$ and let $\{z_n\}$ be a sequence in $S_Z$. Suppose that for every $n\in \N$, the bounded linear operator $\Phi_n\colon \mathcal{I}'(Z,W)\longrightarrow W$  given by $\Phi_n(T)=T(z_n)$ for every $T\in \mathcal{I}'(Z,W)$ is onto. Then, for every residual set $D$ of $W$, the set
$$
\mathcal{B}=\bigl\{T\in \mathcal{I}'(X,Y)\colon T(z_n)\in D \text{ for all } n\in \N\bigr\}
$$
is residual in $\mathcal{I}'(X,Y)$.
\end{lemma}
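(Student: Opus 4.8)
The plan is to reduce everything to the open mapping theorem followed by routine Baire-category bookkeeping. First I would record two preliminary observations. On the one hand, $\mathcal{I}'(Z,W)$ is a closed subspace of the Banach space $\Lin(Z,W)$, hence itself a Banach space, and in particular a Baire space; the same is true of $W$, so ``residual'' is meaningful (and well-behaved under countable intersections) in both spaces. On the other hand, each $\Phi_n$ is a bounded linear map, since $\|\Phi_n(T)\|=\|Tz_n\|\leq\|T\|$ using $\|z_n\|=1$. By hypothesis $\Phi_n$ is moreover a surjection between Banach spaces, so the open mapping theorem tells us that $\Phi_n$ is an open map.

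Next I would isolate the elementary fact that the preimage of a dense set under a continuous open surjection is dense. Concretely: if $A\subseteq W$ is dense and $U\subseteq\mathcal{I}'(Z,W)$ is a nonempty open set, then $\Phi_n(U)$ is a nonempty open subset of $W$ (openness of $\Phi_n$), hence meets $A$, so there is $T\in U$ with $\Phi_n(T)\in A$; that is, $U\cap\Phi_n^{-1}(A)\neq\emptyset$. Since $U$ was arbitrary, $\Phi_n^{-1}(A)$ is dense in $\mathcal{I}'(Z,W)$.

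Finally I would assemble the conclusion. Write $D\supseteq\bigcap_{k\in\N}V_k$ with each $V_k\subseteq W$ open and dense (possible since $D$ is residual in the Baire space $W$). For each fixed $n$, the set $\Phi_n^{-1}(V_k)$ is open by continuity of $\Phi_n$ and dense by the previous paragraph, so $\Phi_n^{-1}(D)\supseteq\bigcap_{k\in\N}\Phi_n^{-1}(V_k)$ is residual in $\mathcal{I}'(Z,W)$. Then $\mathcal{B}=\bigcap_{n\in\N}\Phi_n^{-1}(D)$ is a countable intersection of residual subsets of the Baire space $\mathcal{I}'(Z,W)$, hence residual, which is exactly the claim. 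There is essentially no genuine obstacle in this argument; the only points one must keep straight are that $\mathcal{I}'(Z,W)$ (and $W$) are complete, so that the notion of residuality behaves correctly, and that the \emph{surjectivity} of each $\Phi_n$ is precisely what allows one to pass, via the open mapping theorem, from density in $W$ to density of the preimage in $\mathcal{I}'(Z,W)$.
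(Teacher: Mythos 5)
Your argument is correct and is essentially identical to the paper's: both use the open mapping theorem to conclude each $\Phi_n$ is open, deduce that the preimage of a dense open set under a continuous open surjection is dense open, and then take the countable intersection $\bigcap_{n,k}\Phi_n^{-1}(V_k)$ to land inside $\mathcal{B}$. No gaps.
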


\begin{proof}
Let $\{O_n\}$ be a sequence of dense open set of $W$ such that $\bigcap\nolimits_{m\in \N} O_m \subseteq D$. As $\Phi_n$ is bounded linear and onto, $\Phi_n$ is an open map. Moreover, $\Phi_n^{-1}(O_m)$ is open and dense in $\mathcal{I}'(Z,W)$. Indeed, the set is open by continuity; also, for every open subset $U$ of $\mathcal{I}'(Z,W)$ and every $n,m\in\mathbb N$, $\Phi_n(U)\cap O_m\neq \emptyset$ as $\Phi_n (U)$ is open and $O_m$ is dense in $W$; hence $\Phi_n^{-1}(O_m)\cap U\neq \emptyset$. Now, the set
$$
\bigcap_{n,m\in \N}\Phi_n^{-1}(O_m)
$$
is residual in $\mathcal{I}'(Z,W)$ and it is immediate that it is contained in $\mathcal{B}$.
\end{proof}

\begin{proof}[Proof of Theorem~\ref{thr:abstract-countably-boundary}]
Suppose first that $\NA (X,\mathbb{K})$ is residual in $X^*$. We apply Lemma~\ref{lemma:residualinrange} with $Z=Y^*$, $W=X^*$, $z_n=y_n^*$ for every $n\in \N$, $D$ a residual set contained in $\NA(X,\K)\subset W$, and
$$
\mathcal{I}'(Z,W)=\{T^*\in \Lin(Z,W)\colon T\in \mathcal{I}(X,Y)\},
$$
which is closed since it is isometrically isomorphic to $\mathcal{I}(X,Y)$. Moreover,  $\Phi_n(\mathcal{I}'(Z,W))=W$ for every $n\in \N$. Indeed, for every $x_0^*\in W=X^*$, define $T\in \mathcal{I}(X,Y)$ by $Tx=x_0^*(x)y_n$ where $y_n\in Y$ is a point at which $y_n^* (y_n)=1$. Observe that $T^*\in \mathcal{I}'(Z,W)$ and $\Phi_n(T^*)=T^*(y_n^*)=x_0^*$; hence $\Phi_n$ is surjective. Now, we can apply Lemma~\ref{lemma:residualinrange} to have that the set
$$
\mathcal{B}=\bigl\{T\in \mathcal{I}(X,Y)\colon T^*(y^*_n)\in D \text{ for all } n\in \N\bigr\}
$$
is residual in $\mathcal{I}(X,Y)\equiv \mathcal{I}'(Z,W)$. Therefore, $\mathcal{A}\cap \mathcal{B}$ is also residual, but this intersection is contained in the set
$$
\mathcal{C}=\bigl\{T\in \mathcal{I}(X,Y)\colon \|T\|=\|T^*y_n^*\|\text{ with } T^*y_n^*\in D \text{ for some } n\in \N\bigr\}
$$
which is a fortiori residual in $\mathcal{I}(X,Y)$. As $D\subset \NA(X,\K)$, it follows from Fact~\ref{fact:containinginNA(X,Y)} that $\mathcal{C}\subset \NA(X,Y) \cap \mathcal{I} (X, Y)$, getting the residuality of the latter set.

In the case when $\SE(X)$ is dense, we take the above set $D$ to be $\SE(X)$. Then the set $\mathcal{C}$ is contained in the closure of $\ASE(X,Y)\cap \mathcal{I}(X,Y)$ by Lemma~\ref{lem:RMI}, hence $\ASE(X,Y)\cap \mathcal{I}(X,Y)$ is dense in $\mathcal{I}(X,Y)$.
\end{proof}

We are ready to present the main consequences of Theorem~\ref{thr:abstract-countably-boundary}.

\begin{corollary}\label{coro:countably-1}
Let $X$ be a Banach space, let $Y$ be a Banach space with the RNP such that $\strexp{B_Y}$ is countable up to rotations, and let $\mathcal{I}(X,Y^*)$ a closed subspace of $\Lin(X,Y^*)$ containing rank one operators.
\begin{enumerate}
\setlength\itemsep{0.3em}
  \item[(a)] If $\NA(X,\K)$ is residual, then $\NA(X,Y^*)\cap \mathcal{I}(X,Y^*)$ is residual in $\mathcal{I}(X,Y^*)$.
  \item[(b)] If $\SE(X)$ is dense, then the elements of $\mathcal{I}(X,Y^*)$ at which the norm of $\Lin(X,Y^*)$ is Fr\'{e}chet-differentiable is dense in  $\mathcal{I}(X,Y^*)$; in particular, $\ASE(X,Y^*)\cap \mathcal{I}(X,Y^*)$ is dense in $\mathcal{I}(X,Y^*)$.
\end{enumerate}
\end{corollary}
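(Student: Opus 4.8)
The plan is to deduce the statement from Theorem~\ref{thr:abstract-countably-boundary} by producing, out of the strongly exposed points of $B_Y$, a suitable sequence sitting in $S_{(Y^*)^*}=S_{Y^{**}}$. Since $\strexp{B_Y}$ is countable up to rotations, fix a sequence $\{y_n\}\subseteq\strexp{B_Y}$ with $\strexp{B_Y}=\bigcup_n\T y_n$, and regard each $y_n$ as a norm-one element of $Y^{**}$. The key point will be that the set $\mathcal A=\{T\in\mathcal I(X,Y^*)\colon\|T\|=\|T^*y_n\|\text{ for some }n\}$ is residual in $\mathcal I(X,Y^*)$; granted this, part~(a) is immediate from Theorem~\ref{thr:abstract-countably-boundary}(a), and the density of $\ASE(X,Y^*)\cap\mathcal I(X,Y^*)$ asserted in~(b) is immediate from Theorem~\ref{thr:abstract-countably-boundary}(b).

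To prove that $\mathcal A$ is residual, I would pass to the adjoint side via $\Psi\colon\mathcal I(X,Y^*)\to\Lin(Y,X^*)$, $\Psi(T)=T^*|_Y$. By Goldstine's theorem and the $w^*$-lower semicontinuity of the norm of $X^*$ one gets $\|\Psi(T)\|=\sup_{y\in B_Y}\|T^*y\|=\|T^*\|=\|T\|$, so $\Psi$ is a linear isometry; thus $\mathcal J:=\Psi(\mathcal I(X,Y^*))$ is a closed subspace of $\Lin(Y,X^*)$, and since $\Psi(x^*\otimes\psi)=\psi\otimes x^*$ it contains all rank one operators. As $Y$ has the RNP, Proposition~\ref{propo:BourgainStegall}(a)---whose perturbation is of rank one---shows $\ASE(Y,X^*)\cap\mathcal J$ is a dense $G_\delta$, hence residual, in $\mathcal J$. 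Moreover, if $R\in\ASE(Y,X^*)$ attains its norm at $y_0$, then taking $\phi\in S_{X^{**}}$ with $\phi(Ry_0)=\|R\|$ and invoking Lemma~\ref{lem:RMI}(1) gives $R^*\phi\in\SE(Y)$, which attains its norm at $y_0$; hence $y_0$ is the strongly exposed point of $R^*\phi$, so $y_0\in\strexp{B_Y}$ and therefore $R$ attains its norm at some $y_n$ (after a rotation). Transporting back through the homeomorphism $\Psi$ yields $\Psi^{-1}\bigl(\ASE(Y,X^*)\cap\mathcal J\bigr)\subseteq\mathcal A$, so $\mathcal A$ is residual.

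For the refined statement in~(b), I would follow the proof of Theorem~\ref{thr:abstract-countably-boundary}(b) but intersect $\mathcal A$ also with the residual set $\mathcal B=\{T\colon T^*y_n\in\SE(X)\text{ for all }n\}$ (residual by Lemma~\ref{lemma:residualinrange}, using that $\SE(X)$ is a dense $G_\delta$ and that the evaluations $T^*\mapsto T^*y_n$ are onto) and with $\Psi^{-1}(\ASE(Y,X^*)\cap\mathcal J)$. For $T$ in this intersection, $R:=\Psi(T)$ absolutely strongly exposes, after a rotation, some $y_{n_1}$, so $\|T^*y_{n_1}\|=\|R\|=\|T\|$, while $T^*y_{n_1}\in\SE(X)$ strongly exposes a point $x_0\in\strexp{B_X}$ at which $T$ attains its norm and $(Tx_0)(y_{n_1})=\|T\|$. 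Reading $x_0\in X\subseteq X^{**}$, this last identity says $\re\langle x_0,Ry_{n_1}\rangle=\|R\|$, so Lemma~\ref{lem:RMI}(1) gives $R^*x_0\in\SE(Y)$; since $R^*|_X=T$, this means $Tx_0\in\SE(Y)$, i.e.\ the norm of $Y^*$ is Fr\'echet differentiable at $Tx_0$. Then Lemma~\ref{lem:RMI}(2) provides, arbitrarily close to $T$, an operator $S\in\ASE(X,Y^*)\cap\mathcal I(X,Y^*)$ with $Sx_0\in\R^+Tx_0$ and $\|Sx_0\|=\|S\|$, whence by Proposition~\ref{prop:Heinrich} the norm of $\Lin(X,Y^*)$ is Fr\'echet differentiable at $S$. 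Since such $S$ approximate every $T$ in a residual set, they are dense, and in particular so is $\ASE(X,Y^*)\cap\mathcal I(X,Y^*)$.

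The main obstacle is the bookkeeping in passing between operators into $Y^*$ and operators out of $Y$: one must check that $\Psi$ is an isometry \emph{onto} a closed subspace $\mathcal J$ that still contains all rank one operators (so that Bourgain--Stegall applies inside $\mathcal J$), and---for the Fr\'echet-differentiability refinement---that the strongly exposing functional $R^*x_0$ produced by Lemma~\ref{lem:RMI}(1) is precisely $Tx_0$, which hinges on the identity $R^*|_X=T$ and on having chosen $x_0$ as the point where $T^*y_{n_1}$ itself attains its norm. The remaining bits (handling rotations, the invariance of $\SE(\cdot)$ under nonzero scalars, and the fact that the norm of $Y^*$ is Fr\'echet differentiable at $Tx_0$ iff at $Tx_0/\|Tx_0\|$) are routine.
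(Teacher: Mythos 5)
Your proposal is correct and follows essentially the same route as the paper: the same isometry $\Psi(T)=T^*|_Y$ onto $\Lin(Y,X^*)$, the same use of the Bourgain--Stegall rank-one perturbation to make $\bigl\{T\colon \|T\|=\|\Psi(T)(y_n)\|\text{ for some }n\bigr\}$ residual, and then the machinery of Theorem~\ref{thr:abstract-countably-boundary}. The only (harmless) deviation is in the Fr\'echet-differentiability refinement of (b): the paper applies Proposition~\ref{prop:Heinrich} on the $\Lin(Y,X^*)$ side and transfers differentiability back through the surjective isometry $\Psi$, whereas you re-derive it on the $\Lin(X,Y^*)$ side via the identity $R^*|_X=T$ and an extra perturbation from Lemma~\ref{lem:RMI}(2).
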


\begin{proof}
Write $\Psi\colon \Lin(X,Y^*)\longrightarrow \Lin(Y,X^*)$ given by $\Psi(T)=T^*|_{Y}$ and observe that $\Psi$ is an isometric isomorphism. Let $\{y_n\}$ a sequence in $S_Y$ such that $\T\{y_n\colon n\in \N\}=\strexp{B_Y}$. Then, the set
$$
\bigl\{T\in\mathcal{I}(X,Y^*)\colon \|T\|=\| \Psi (T) (y_n)\| \text{ for some } n\in \N\bigr\}
$$
contains $$\Psi^{-1}\left(\ASE(Y,X^*)\cap \Psi\bigl(\mathcal{I}(X,Y^*)\bigr)\right)$$ which is residual in $\mathcal{I}(X,Y^*)$ by Bourgain-Stegall result as $Y$ has the RNP (see the item (a) of Proposition~\ref{propo:BourgainStegall}). The first assertion of the corollary now follows from the same argument as in the proof of Theorem~\ref{thr:abstract-countably-boundary}, where the Lemma \ref{lemma:residualinrange} is applied to $\Psi (\Lin (X, Y^*)) = \Lin (Y, X^*)$ . For the second assertion (b), if $\SE (X)$ is dense in $X^*$, then by taking $D$ to be $\SE(X)$, we have the denseness of the set
$$
\bigl\{T\in \Psi\bigl(\mathcal{I}(X,Y^*)\bigr)\colon \|T\|=\|Ty\| \text{ with } Ty\in \SE(X)\text{ for some } y\in \strexp{B_Y}\bigr\}
$$
as in Theorem \ref{thr:abstract-countably-boundary}. But then Lemma~\ref{lem:RMI} gives that the set
\begin{multline*}
\mathcal{C}=\bigl\{T\in\Psi\bigl(\mathcal{I}(X,Y^*)\bigr)\cap \ASE(Y,X^*)\colon\\
  \|T\|=\|Ty\| \text{ with } Ty\in \SE(X)\text{ for some } y\in \strexp{B_Y}\bigr\}
\end{multline*}
is actually dense in $\Psi\bigl(\mathcal{I}(X,Y^*)\bigr)$. Now, Proposition~\ref{prop:Heinrich} shows that the norm of $\Lin(Y,X^*)$ is Fr\'{e}chet-differentiable at all elements of $\mathcal{C}$ so, the norm of $\Lin(X,Y^*)$ is Fr\'{e}chet-differentiable at all element of $\Psi^{-1}(\mathcal{C})$, which is dense in $\mathcal{I}(X,Y^*)$. The denseness of $\ASE(X,Y^*)$ follows also from Proposition~\ref{prop:Heinrich}.
\end{proof}

A first immediate consequence of this corollary deals with finite-dimensional range spaces.

\begin{example}\label{example:finite_dimensional_countable}
Let $X$ be a Banach space and let $Y$ be a finite-dimensional Banach space such that $\ext{B_{Y^*}}$ is countable up to rotations.
\begin{enumerate}
\setlength\itemsep{0.3em}
  \item[(a)] If $\NA(X,\K)$ is residual, then $\NA(X,Y)$ is residual.
  \item[(b)] If $\SE(X)$ is dense, then Fr\'{e}chet-differentiability points in $\Lin(X,Y)$ are dense; in particular, $\ASE(X,Y)$ is dense in $\Lin(X,Y)$.
\end{enumerate}
\end{example}

\begin{remark}
It was observed in \cite[p.~414]{AAP2} that a finite-dimensional Banach space has property quasi-$\beta$ if and only if $E_Y=\ext{B_{Y^*}}/\sim$ is a discrete topological space.
It is clear that for a finite-dimensional Banach space $Y$, the hypothesis that $\ext{B_{Y^*}}$ is countable up to rotations is much weaker than the hypothesis that $E_Y$ is discrete; hence we can obtain more examples from Corollary \ref{coro:countably-1} than the ones which can be obtained via property quasi-$\beta$.
It might be worth mentioning that for a \emph{2-dimensional} Banach space $Y$, the space $E_Y$ is discrete if and only if $E_Y$ is finite (since the set $\ext{B_{Y^*}}$ is compact in this case).
\end{remark}

Another interesting consequence of Theorem~\ref{thr:abstract-countably-boundary} is the following one which looks similar to the previous corollary, but now the conditions stated in Corollary \ref{coro:countably-1} are assumed for a dual Banach space.

\begin{corollary}\label{coro:countably-2}
Let $X$ be a Banach space, let $Y$ be a Banach space, and let $\mathcal{I}(X,Y)$ a closed subspace of $\Lin(X,Y)$ containing rank one operators. Suppose that $Y^*$ has the RNP and $\strexp{B_{Y^*}}$ is countable up to rotations.
\begin{enumerate}
\setlength\itemsep{0.3em}
  \item[(a)] If $\NA(X,\K)$ is residual, then $\NA(X,Y)\cap \mathcal{I}(X,Y)$ is residual in $\mathcal{I}(X,Y)$.
  \item[(b)] If $\SE(X)$ is dense, then the elements of $\mathcal{I}(X,Y)$ at which the norm of $\Lin(X,Y)$ is Fr\'{e}chet-differentiable is dense in $\mathcal{I}(X,Y)$; in particular, $\ASE(X,Y)\cap \mathcal{I}(X,Y)$ is dense in $\mathcal{I}(X,Y)$.
\end{enumerate}
\end{corollary}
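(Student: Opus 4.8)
The plan is to deduce the result from Theorem~\ref{thr:abstract-countably-boundary}, applied to $\mathcal{I}(X,Y)$ with a suitable sequence $\{y_n^*\}\subseteq S_{Y^*}$, and then to upgrade assertion (b) to the Fr\'echet-differentiability statement by re-running the argument of its proof, as was done in Corollary~\ref{coro:countably-1}. If $Y=\{0\}$ there is nothing to prove; otherwise, since $Y^*$ has the RNP and $\strexp{B_{Y^*}}$ is countable up to rotations, fix a sequence $\{y_n^*\}$ in $S_{Y^*}$ with $\strexp{B_{Y^*}}\subseteq\bigcup_{n\in\N}\T y_n^*$. Recall that $T\longmapsto T^*$ is an isometric isomorphism of $\Lin(X,Y)$ onto $\Linw(Y^*,X^*)$ carrying $\mathcal{I}(X,Y)$ onto a closed subspace $\mathcal{J}\subseteq\Linw(Y^*,X^*)$ that contains the (weak-star continuous) rank one operators $\widehat{y}\otimes x^*$ with $y\in Y$ and $x^*\in X^*$.

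First I would show that
$$
\mathcal{A}':=\bigl\{T\in\mathcal{I}(X,Y)\colon T^*\in\ASE(Y^*,X^*)\bigr\}
$$
is residual in $\mathcal{I}(X,Y)$: it is a $G_\delta$ set, because $\ASE(Y^*,X^*)\cap\Linw(Y^*,X^*)$ is $G_\delta$ in $\Linw(Y^*,X^*)$ by the same argument used for $\ASE(X,Y)\subseteq\Lin(X,Y)$; and it is dense, because the ``moreover'' part of Proposition~\ref{propo:BourgainStegall}(b) provides, arbitrarily close to a given $T\in\mathcal{I}(X,Y)$, a rank one perturbation $S=T+\rho\,x^*\otimes y\in\mathcal{I}(X,Y)$ with $S^*\in\ASE(Y^*,X^*)$. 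The crucial point is then: if $T^*$ absolutely strongly exposes $B_{Y^*}$ at $y_0^*$, then $y_0^*\in\strexp{B_{Y^*}}$. Indeed, choosing $x_0^{**}\in S_{X^{**}}$ with $x_0^{**}(T^*y_0^*)=\|T^*y_0^*\|=\|T^*\|>0$, a routine phase-chasing argument shows that $T^{**}x_0^{**}\in Y^{**}$ strongly exposes $B_{Y^*}$ at $y_0^*$: if $\re(T^{**}x_0^{**})(w_k^*)\to\|T^*\|$ with $w_k^*\in B_{Y^*}$, then $\|T^*w_k^*\|\to\|T^*\|$, so suitable rotations $\theta_k w_k^*$ converge to $y_0^*$, and tracking phases forces $\theta_k\to1$, hence $w_k^*\to y_0^*$. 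Therefore $y_0^*=\theta\,y_{n_0}^*$ for some $n_0$ and $\theta\in\T$, so $\|T\|=\|T^*\|=\|T^*y_{n_0}^*\|$; this places $\mathcal{A}'$ inside the set $\mathcal{A}$ of Theorem~\ref{thr:abstract-countably-boundary} for the sequence $\{y_n^*\}$, making $\mathcal{A}$ residual, and Theorem~\ref{thr:abstract-countably-boundary} then yields (a) and the denseness of $\ASE(X,Y)\cap\mathcal{I}(X,Y)$ in (b).

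To obtain the Fr\'echet-differentiability strengthening in (b), assume $\SE(X)$ is dense. By Lemma~\ref{lemma:residualinrange}, applied with $Z=Y^*$, $W=X^*$, $z_n=y_n^*$, $D=\SE(X)$ and $\mathcal{J}$ in the role of $\mathcal{I}'(Z,W)$ (each map $T^*\longmapsto T^*y_n^*$ being onto $X^*$, which one checks using near-norming unit vectors of $Y$), the set $\mathcal{B}:=\{T\in\mathcal{I}(X,Y)\colon T^*y_n^*\in\SE(X)\text{ for all }n\}$ is residual, hence so is $\mathcal{A}'\cap\mathcal{B}$. Fix $T$ in this set; as above, $T^*$ absolutely strongly exposes $B_{Y^*}$ at $y_0^*=\theta\,y_{n_0}^*$, and $T^*y_{n_0}^*\in\SE(X)$ strongly exposes $B_X$ at some $x_1\in\strexp{B_X}$. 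By Fact~\ref{fact:containinginNA(X,Y)}, $T$ attains its norm at $x_1$ and $y_{n_0}^*(Tx_1)=\|T\|$; thus $\widehat{x_1}\in X\subseteq X^{**}$ norms $T^*y_0^*$ up to the factor $\theta$, and repeating the phase-chasing argument with $x_0^{**}=\overline{\theta}\,\widehat{x_1}$ shows that $T^{**}(\overline\theta\,\widehat{x_1})=\overline\theta\,\widehat{Tx_1}$ strongly exposes $B_{Y^*}$ at $y_0^*$, equivalently that $\widehat{Tx_1}$ strongly exposes $B_{Y^*}$ at $y_{n_0}^*$. By the \v{S}mulyan test, the norm of $Y$ is Fr\'echet differentiable at $Tx_1$. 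Finally, Lemma~\ref{lem:RMI}(2) yields, arbitrarily close to $T$, an operator $S\in\ASE(X,Y)\cap\mathcal{I}(X,Y)$ attaining its norm at a point of $\T x_1\subseteq S_X$ with $Sx_1\in\R^+Tx_1$; since Fr\'echet differentiability of the norm of $Y$ is invariant under nonzero scalar multiples, the image of that point under $S$ is a Fr\'echet-differentiability point of $Y$, so Proposition~\ref{prop:Heinrich} gives that the norm of $\Lin(X,Y)$ is Fr\'echet differentiable at $S$. As $\mathcal{A}'\cap\mathcal{B}$ is dense and Fr\'echet-differentiability points of $\Lin(X,Y)$ are absolutely strongly exposing, this proves (b).

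The step I expect to be the main obstacle is the double use of the ``absolutely strongly exposing implies strongly exposed'' mechanism: first, to guarantee that the absolutely strongly exposed point $y_0^*$ of $T^*$ actually lands in $\strexp{B_{Y^*}}$, so that it is captured by the countable list $\{y_n^*\}$; and, more delicately, to ensure that it is exactly the canonical image $\widehat{Tx_1}$ of the norm-attaining vector $Tx_1$ — and not merely some element of $Y^{**}$ — that strongly exposes $B_{Y^*}$ at $y_0^*$, since this is precisely what feeds the \v{S}mulyan test to produce Fr\'echet differentiability of the norm of $Y$ at $Tx_1$, and hence, via Heinrich's characterization, of the operator norm at $S$.
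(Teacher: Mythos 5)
Your proof is correct and follows essentially the same route as the paper: both reduce to Theorem~\ref{thr:abstract-countably-boundary} by showing that the set of $T\in\mathcal{I}(X,Y)$ with $T^*\in\ASE(Y^*,X^*)$ is residual (via the rank-one perturbation in Proposition~\ref{propo:BourgainStegall}(b)) and that any such $T^*$ attains its norm at a point of $\strexp{B_{Y^*}}=\T\{y_n^*\}$, so that $\|T\|=\|T^*y_{n_0}^*\|$ for some $n_0$. The only divergence is in the Fr\'echet-differentiability upgrade, where the paper applies Proposition~\ref{prop:Heinrich} directly to $T^*$ in $\Lin(Y^*,X^*)$ (the absolutely strongly exposed point $y_0^*$ and the smooth point $T^*y_0^*\in\SE(X)$ being already at hand) and then restricts the norm to the isometric copy of $\Lin(X,Y)$, whereas you work on the $\Lin(X,Y)$ side at the cost of the extra perturbation via Lemma~\ref{lem:RMI}(2) and the phase-chasing verification that $\widehat{Tx_1}$ strongly exposes $B_{Y^*}$ --- both arguments are valid, yours merely being longer where the paper's is terser.
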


\begin{proof} Write $\Psi\colon \Lin(X,Y)\longrightarrow \Lin(Y^*,X^*)$ given by $\Psi(T)=T^*$ and observe that $\Psi$ is an isometric embedding. Let $\{y^*_n\}$ be a sequence in $S_{Y^*}$ such that $\T\{y^*_n\colon n\in \N\}=\strexp{B_{Y^*}} \subseteq \NA (Y, \mathbb{K})$. By Theorem \ref{thr:abstract-countably-boundary}, it suffices to show that
$$
\mathcal{A}:=\bigl\{T\in\mathcal{I}(X,Y)\colon \|T\|=\|T^*y^*_n\| \text{ for some } n\in \N\bigr\}
$$
is residual in $\mathcal{I} (X, Y)$. This is immediate since $\mathcal{A}$ contains the following set $$\Psi^{-1}\left(\ASE(Y^*,X^*)\cap \Psi\bigl(\mathcal{I}(X,Y)\bigr)\right),$$ which is residual in $\mathcal{I}(X,Y)$ by Bourgain-Stegall result (see the item (b) of Proposition~\ref{propo:BourgainStegall}). Note that the denseness of elements of $\mathcal{I}(X,Y)$ at which the norm of $\Lin(X,Y)$ is Fr\'{e}chet-differentiable follows from Proposition \ref{prop:Heinrich}.
\end{proof}

Corollary~\ref{coro:countably-2} gives the following particular case.

\begin{example}\label{example:predual-ell1}
Let $X$ be a Banach space, let $Y$ be a predual of $\ell_1$, and let $\mathcal{I}(X,Y)$ a closed subspace of $\Lin(X,Y)$ containing rank one operators.
\begin{enumerate}
\setlength\itemsep{0.3em}
  \item[(a)] If $\NA(X,\K)$ is residual, then $\NA(X,Y)\cap \mathcal{I}(X,Y)$ is residual in $\mathcal{I}(X,Y)$.
  \item[(b)] If $\SE(X)$ is dense, then the elements of $\mathcal{I}(X,Y)$ at which the norm of $\Lin(X,Y)$ is Fr\'{e}chet-differentiable is dense in $\mathcal{I}(X,Y)$; in particular, $\ASE(X,Y)\cap \mathcal{I}(X,Y)$ is dense in $\mathcal{I}(X,Y)$.
\end{enumerate}
\end{example}

As far as we know, the question of whether all preduals of $\ell_1$ have Lindenstrauss property B remains unsolved, so the above result provides new examples of pairs of Banach spaces for which the set of norm attaining operators is dense.

Another consequence of Theorem~\ref{thr:abstract-countably-boundary} is the following. Recall that a subset $B\subset S_{Y^*}$ is a \emph{James boundary} for $Y$ if for every $y\in Y$ there is $y^*\in B$ such that $|y^*(y)|=\|y\|$.

\begin{corollary}\label{coro:countableboundary-compact}
Let $X$ be a Banach space and let $Y$ be a Banach space admitting a countable James boundary.
\begin{enumerate}
\setlength\itemsep{0.3em}
  \item[(a)] If $\NA(X,\K)$ is residual, then $\NA(X,Y)\cap \cpt(X,Y)$ is residual in $\cpt(X,Y)$.
  \item[(b)] If $\SE(X)$ is dense, then $\ASE(X,Y)\cap \cpt(X,Y)$ is dense in $\cpt(X,Y)$.
\end{enumerate}
\end{corollary}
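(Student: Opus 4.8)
The plan is to obtain this as a direct application of Theorem~\ref{thr:abstract-countably-boundary} with $\mathcal{I}(X,Y)=\cpt(X,Y)$, which is a closed subspace of $\Lin(X,Y)$ containing all rank one operators. Enumerating a countable James boundary of $Y$ as a sequence $\{y_n^*\}$ in $S_{Y^*}$ (repeating elements if the boundary is finite, which is harmless), the only thing that needs to be checked is that the set
$$
\mathcal{A}=\bigl\{T\in \cpt(X,Y)\colon \|T\|=\|T^*y_n^*\|\ \text{for some }n\in\N\bigr\}
$$
is residual in $\cpt(X,Y)$. In fact I expect to prove the stronger statement $\mathcal{A}=\cpt(X,Y)$.

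The key observation is that, although the norm of a compact operator need not be attained, it is ``attained in the closure'': since $\overline{T(B_X)}$ is norm compact and $y\mapsto\|y\|$ is continuous, there is $y_0\in\overline{T(B_X)}$ with $\|y_0\|=\sup_{x\in B_X}\|Tx\|=\|T\|$, and I may pick $\{x_k\}\subseteq B_X$ with $Tx_k\to y_0$. Applying the James boundary property to the vector $y_0\in Y$ yields some $n\in\N$ with $|y_n^*(y_0)|=\|y_0\|=\|T\|$. Passing the functional through the limit,
$$
\|T^*y_n^*\|\geq \sup_k\bigl|(T^*y_n^*)(x_k)\bigr|=\sup_k\bigl|y_n^*(Tx_k)\bigr|\geq \lim_k\bigl|y_n^*(Tx_k)\bigr|=|y_n^*(y_0)|=\|T\|,
$$
while $\|T^*y_n^*\|\leq\|y_n^*\|\,\|T^*\|=\|T\|$ since $\|y_n^*\|=1$; hence $\|T^*y_n^*\|=\|T\|$ and $T\in\mathcal{A}$. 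This shows $\mathcal{A}=\cpt(X,Y)$, so in particular $\mathcal{A}$ is (trivially) residual.

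With $\mathcal{A}$ residual, Theorem~\ref{thr:abstract-countably-boundary}(a) gives that $\NA(X,Y)\cap\cpt(X,Y)$ is residual in $\cpt(X,Y)$ whenever $\NA(X,\K)$ is residual, proving (a), and Theorem~\ref{thr:abstract-countably-boundary}(b) gives that $\ASE(X,Y)\cap\cpt(X,Y)$ is dense in $\cpt(X,Y)$ whenever $\SE(X)$ is dense in $X^*$, proving (b). There is no serious obstacle in this argument; the only point requiring a little care is precisely the passage from ``$\|T\|$ attained'' to ``$\|T\|$ attained on $\overline{T(B_X)}$'', which is exactly what the compactness of $T$ supplies and what allows the countable James boundary to detect the norm of $T$ through one of the functionals $y_n^*$.
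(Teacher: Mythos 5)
Your proposal is correct and follows essentially the same route as the paper: the paper also reduces to Theorem~\ref{thr:abstract-countably-boundary} by showing that the set $\mathcal{A}$ equals all of $\cpt(X,Y)$, via the same compactness argument (the norm of $T$ is attained on the compact set $\overline{T(B_X)}$ and then detected by a boundary functional), which the paper isolates as Remark~\ref{fact_cpt}.
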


The proof requires the fact, which is easy to prove, that the adjoint of a compact operator between Banach spaces attains its norm at an element of a prefixed James boundary.

\begin{remark}\label{fact_cpt}
Let $X$, $Y$ be Banach spaces and let $B\subset S_{Y^*}$ be a James boundary. Then, given $T\in \cpt(X,Y)$, there is $y^*\in B$ such that $\|T^*y^*\|=\|T\|$. Indeed, as $\overline{T(B_X)}$ is compact in $Y$, there is $y_0\in \overline{T(B_X)}$ with $\|y_0\|=\|T\|$. Pick $y_0^*\in B$ such that $|y_0^*(y_0)|=\|y_0\|=\|T\|$ and observe that
\begin{align*}
\|T^*y_0^*\| &\geq  \sup_{x\in B_X}\bigl|[T^*y_0^*](x)\bigr|= \sup_{x\in B_X}\bigl|y_0^*(Tx)\bigr| \\
   & = \sup_{y\in \overline{T(B_X)}} |y_0^*(y)|\geq |y_0^*(y_0)|=\|T^*\|.
\end{align*}
\end{remark}

\begin{proof}[Proof of Corollary~\ref{coro:countableboundary-compact}]
Write $B=\{y_n^*\colon n\in \N\}$ for the countable James boundary for $Y$ and use the previous Remark \ref{fact_cpt} to show that the set
$$
\{T\in \cpt(X,Y)\colon \|T\|=\|T^*y_n^*\|\ \text{for some } n\in \N\bigr\}
$$
coincides with $\cpt(X,Y)$, so is trivially residual in $\cpt(X,Y)$. Then, Theorem~\ref{thr:abstract-countably-boundary} applies and gives the results.
\end{proof}

\begin{remark}\label{remark:countable-boundary-subspaces}
Observe that a closed subspace of a Banach space admitting a countable James boundary also admits a countable James boundary (just consider the restrictions of elements of the boundary to the subspace which attain their norm at the subspace). Therefore, the denseness results from Corollary~\ref{coro:countableboundary-compact} pass to closed subspaces. This is not common in the theory of norm attaining operators: observe that $\ell_\infty$ has property $\beta$, hence property $B$ and there are separable Banach spaces $X$ and $Y$ for which $\NA(X,Y)\cap \cpt(X,Y)$ is not dense in $\cpt(X,Y)$ \cite{martinjfa}. Besides, the space $c$, which has property $\beta$, is not polyhedral, so it contains a two-dimensional subspace with infinitely many extreme points in its dual ball, hence failing property quasi-$\beta$ (so it is not know if such a subspace has property B). In any case, every subspace of $c$ satisfies the conditions of Corollary~\ref{coro:countableboundary-compact}.
\end{remark}

The following consequence of Corollary~\ref{coro:countableboundary-compact} is specially interesting.

\begin{example}\label{example:separablepolyhedralcompact}
Let $X$ be a Banach space and let $Y$ be a separable polyhedral space.
\begin{enumerate}
\setlength\itemsep{0.3em}
  \item[(a)] If $\NA(X,\K)$ is residual, then $\NA(X,Y)\cap \cpt(X,Y)$ is residual in $\cpt(X,Y)$.
  \item[(b)] If $\SE(X)$ is dense, then the norm of $\Lin(X,Y)$ is Fr\'{e}chet-differentiability at a dense subset of $\cpt(X,Y)$; in particular, $\ASE(X,Y)\cap \cpt(X,Y)$ is dense in $\cpt(X,Y)$.
\end{enumerate}
\end{example}

\begin{proof}
Every separable polyhedral space $Y$ admits a countable James boundary  \cite[Theorem~1.4]{Fonf2000} and then Corollary~\ref{coro:countableboundary-compact} gives the result. Only the part related to the denseness of Fr\'{e}chet-differentiability points in case (b) does not follows directly from such corollary, so let us prove it. It is actually proved in \cite[Theorem~1.4]{Fonf2000} that the set $\wstrexp{B_{Y^*}}$ is countable and it is a James boundary for $Y$. Therefore, the proof of Corollary~\ref{coro:countableboundary-compact} shows that the set
$$
\mathcal{A}=\bigl\{T\in \cpt(X,Y)\colon \|T^*y^*\|=\|T^*\| \text{ and } T^*y^*\in \SE(X) \text{ for some } y^*\in \wstrexp{B_{Y^*}}  \bigr\}
$$
is dense in $\cpt(X,Y)$. By Lemma~\ref{lem:RMI}, for every $T\in\mathcal{A}$ and every $\eps>0$ there is $S\in \cpt(X,Y)$ such that $\|T-S\|<\eps$ and $S^*\in \ASE(Y^*,X^*)$, $\|S^*\|=\|S^*y_0^*\|$ with $S^*y_0^*\in \SE(X)$. Proposition~\ref{prop:Heinrich} shows that $S^*$ is a point of Fr\'{e}chet-differentiability of the norm of $\Lin(Y^*,X^*)$ so, a fortiori, a point of Fr\'{e}chet differentiability of the norm of $\Linw (Y^*,X^*)$ and, therefore, $S$ is a point of Fr\'{e}chet-differentiability of the norm of $\Lin(X,Y)$.
\end{proof}

\begin{remark}
It is known (and easy to prove) that for every Banach space $X$ and every polyhedral space with the approximation property, $\NA(X,Y)\cap \cpt(X,Y)$ is dense in $\cpt(X,Y)$ \cite[Corollary~4.5]{Martin2016}. As far as we know, whether the assumption for $Y$ to have the approximation property can be removed or not is an open question. The previous example shows that this is the case when $\NA(X,\K)$ is residual and $Y$ is separable.
\end{remark}

\begin{remark}
While every separable polyhedral space contains a countable James boundary (actually the set of $w^*$-strongly exposed points of its dual ball) which is the key to proving Example~\ref{example:separablepolyhedralcompact}, there are examples of polyhedral Banach spaces $Y$ for which $\ext{B_{Y^*}}$ is uncontable (even that it cannot be covered by a countable union of compact sets, see \cite{Livni}). We do not know whether Corollary~\ref{coro:countably-2} is applicable for these spaces, as we do not know how big is the set $\strexp{B_{Y^*}}$ for these examples.
\end{remark}

We can remove the separability hypothesis in Example~\ref{example:separablepolyhedralcompact} in the case of $\SE(X)$ dense, but the result gives less information.

\begin{example}\label{example:General_polyhedralcompact}
Let $X$ be a Banach space for which $\SE(X)$ is dense and let $Y$ be a polyhedral space. Then, $\ASE(X,Y)\cap \cpt(X,Y)$ is dense in $\cpt(X,Y)$.
\end{example}

\begin{proof}
Fix $T\in \cpt(X,Y)$ and $\eps>0$. As $Z=\overline{T(X)}$ is separable and polyhedral, it follows from Example~\ref{example:separablepolyhedralcompact} that there is $S\in  \ASE(X,Z)\cap \cpt(X,Z)$ for which $\|T'-S\|<\eps$, where $T'$ is just $T$ considered as
operator from $X$ to $Z$. Now, write $J\colon Z\longrightarrow Y$ for the canonical inclusion and observe that $\|T-JS\|= \|T'-S\|<\eps$ and  $JS\in \ASE(X,Y)\cap \cpt(X,Y)$.
\end{proof}

Another case in which Corollary~\ref{coro:countableboundary-compact} applies is for real almost-CL-spaces with separable dual. Recall that a Banach space $Y$ is an \emph{almost-CL-space} if its unit ball is the absolutely closed convex hull of every maximal convex subset of $S_Y$. Examples of almost-CL-spaces are $C(K)$ spaces and $L_1(\mu)$ spaces, among many others. We refer the reader to \cite{MartPaya-CL} and references therein for more information on almost-CL-spaces.

\begin{example}\label{example:separable-almostCL}
Let $X$ be a Banach space and let $Z$ be a \emph{real} almost-CL-space with $Z^*$ separable and let $Y$ be a closed subspace of $Z$.
\begin{enumerate}
\setlength\itemsep{0.3em}
  \item[(a)] If $\NA(X,\K)$ is residual, then $\NA(X,Y)\cap \cpt(X,Y)$ is residual in $\cpt(X,Y)$.
  \item[(b)] If $\SE(X)$ is dense, then $\ASE(X,Y)\cap \cpt(X,Y)$ is dense in $\cpt(X,Y)$.
\end{enumerate}
\end{example}

\begin{proof}
It follows from \cite[Lemma 3]{MartPaya-CL} that $Z$ admits a countable James boundary (see the proof of \cite[Theorem~5]{MartPaya-CL} for details). Therefore, Corollary~\ref{coro:countableboundary-compact} and Remark~\ref{remark:countable-boundary-subspaces} apply.
\end{proof}

It is not know whether all subspaces of a real almost-CL-space with separable dual have Lindenstrauss property B. It is easy to find such subspaces failing property quasi-$\beta$: a two-dimensional subspace of $c$ with infinitely many extreme points in the dual ball.

The validity of a complex version of Example~\ref{example:separable-almostCL} is not clear. For instance, it is not true that complex almost-CL-spaces with separable dual contains a countable James boundary, as it can be checked from the two-dimensional $\ell_1$ space. As far as we know, it is an open problem if this space has property B. On the other hand, for $C(K)$ spaces the result is also valid in the complex case. Recall that a topological space is called \emph{scattered} if every subset of it contains an isolated point (relative to the subset).

\begin{example}\label{example:scattered}
Let $K$ be a Hausdorff scattered compact topological space, and let $Y$ be a closed subspace of (the real or complex space) $C(K)$. If $\SE(X)$ is dense, then $\ASE(X,Y)\cap \cpt(X,Y)$ is dense in $\cpt(X,Y)$.
\end{example}

\begin{proof}
Fix $T\in \cpt(X,Y)$ and $\eps>0$. Observe that $Z=\overline{T(X)}$ is separable and then there is countable compact space $K_T$ such that $Z$ is contained in $C(K_T)$ \cite[Theorem 2]{PelSem}. As $\{\delta_t\colon t\in K_T\}$ is clearly a James boundary for $C(K_T)$ and it is countable, it follows that $Z$ admits a countable James boundary. We can now argue as in the proof of Example~\ref{example:General_polyhedralcompact}.
\end{proof}

\begin{remark}\label{remark:C(K)space}
It is known \cite{Lin} that $C(K)$ spaces have property $\beta$ when $K$ contains a dense subset of isolated points (in particular, when $K$ is scattered); hence Theorem \ref{thm:quasibeta} can be applied $Y=C(K)$. Moreover, for a compact space $K$, since the dual of $C(K)$ is isometric to $L^1 (\mu)$ for some suitable measure $\mu$, the result in Example \ref{thm:cpt} is also valid for $Y = C(K)$.
The main interest of Example~\ref{example:scattered} is to show the denseness of $\ASE(X,Y)\cap \cpt(X,Y)$ when $Y$ is a \emph{closed subspace} of $C(K)$ spaces provided that $K$ is scattered.
\end{remark}

\subsection{When the set of strongly exposed points in the range space is discrete (up to rotations)}\label{subsection33-discrete}

Our next aim is to provide results on the residuality of $\NA(X,Y)$ which can be applied for non-separable $Y$'s. Instead of requiring countability of some sets as in Theorem~\ref{thr:abstract-countably-boundary} and its consequences, we will require some topological discreteness. Our first result in this line is the following one. We will use the following notation: a subset $A$ of $B_X$ is \emph{discrete up to rotations} if every sequence $\{a_n\}$ of elements of $A$ which converges in norm to an element $a\in A$ satisfies that $a_n=\theta_n a$ with $\theta_n\in \T$ for all sufficiently large $n$ (and then $\{\theta_n\}$ converges to $1$). In the real case, this is the same as requiring $A$ to be discrete for the norm topology.

\begin{theorem}\label{thm:RNP}
Let $X$ be a Banach space and let $Y$ be a Banach space with the RNP such that $\strexp{B_Y}$ is discrete up to rotations. Let $\mathcal{I}(X,Y^*)$ be a closed subspace of $\Lin(X,Y^*)$ containing rank one operators.
\begin{enumerate}
\setlength\itemsep{0.3em}
  \item[(a)] If $\NA(X,\K)$ is residual, then $\NA(X,Y^*)\cap \mathcal{I}(X,Y^*)$ is residual in $\mathcal{I}(X,Y^*)$.
  \item[(b)] If $\SE(X)$ is dense in $X^*$, then the elements of $\mathcal{I}(X,Y^*)$ at which the norm of $\Lin(X,Y^*)$ is Fr\'{e}chet differentiable are dense in $\mathcal{I}(X,Y^*)$. In particular, $\ASE(X,Y^*)\cap \mathcal{I}(X,Y^*)$ is dense in $\mathcal{I}(X,Y^*)$.
\end{enumerate}
\end{theorem}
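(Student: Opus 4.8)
The plan is to imitate the proof of Corollary~\ref{coro:countably-1}, transporting everything through the isometric isomorphism $\Psi\colon \Lin(X,Y^*)\longrightarrow \Lin(Y,X^*)$ given by $\Psi(T)=T^*|_Y$, and to replace the use of a countable enumeration of $\strexp{B_Y}$ by a localisation argument based on discreteness up to rotations. Write $\mathcal{J}:=\Psi\bigl(\mathcal{I}(X,Y^*)\bigr)$: it is a closed subspace of $\Lin(Y,X^*)$ which contains every rank one operator $\psi\otimes x^*$ with $\psi\in Y^*$, $x^*\in X^*$, since $\Psi(x^*\otimes\phi)=\phi\otimes x^*$. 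As $Y$ has the RNP, Proposition~\ref{propo:BourgainStegall}(a), together with the fact that the perturbations it produces are of rank one, shows that $\mathcal{J}_0:=\ASE(Y,X^*)\cap\mathcal{J}$ is a dense $G_\delta$ subset of $\mathcal{J}$, hence residual in $\mathcal{J}$ and a Baire space. Each $S\in\mathcal{J}_0$ absolutely strongly exposes a point $y_S\in\strexp{B_Y}$, which is unique up to a rotation.

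The heart of the matter is the claim that, letting $c$ range over the $\T$-orbits contained in $\strexp{B_Y}$ and putting $\mathcal{J}_c:=\{S\in\mathcal{J}_0\colon y_S\in c\}$, the sets $\mathcal{J}_c$ partition $\mathcal{J}_0$ into relatively clopen pieces. To see this, fix $S\in\mathcal{J}_c$ with $y_S=y_0$ and $\eps>0$, and choose $\eta>0$ so that $\|Sy\|>\|S\|-\eta$ with $y\in B_Y$ forces $\dist(y,\T y_0)<\eps$. If $S'\in\mathcal{J}_0$ satisfies $\|S'-S\|<\eta/2$, then $\|S(y_{S'})\|\geq\|S'\|-\|S-S'\|>\|S\|-\eta$, hence $\dist(y_{S'},\T y_0)<\eps$. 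Consequently, if $S_n'\to S$ in $\mathcal{J}_0$, there are $\theta_n\in\T$ with $\theta_n y_{S_n'}\to y_0$; all these points lie in the balanced set $\strexp{B_Y}$, so discreteness up to rotations yields $y_{S_n'}\in\T y_0=c$ for large $n$, i.e.\ $S_n'\in\mathcal{J}_c$. Thus each $\mathcal{J}_c$ is open in $\mathcal{J}_0$, and being members of a partition into open sets they are clopen.

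Next, fix a representative $y_c\in c$ for each orbit and consider the bounded linear map $\Phi_c\colon\mathcal{J}\longrightarrow X^*$, $\Phi_c(S)=S(y_c)$; it is onto (use the rank one operators $\psi\otimes x^*$ with $\psi(y_c)=1$), hence open, so $\Phi_c^{-1}(D)$ is residual in $\mathcal{J}$ whenever $D\subseteq X^*$ is residual, exactly as in Lemma~\ref{lemma:residualinrange}. Take $D:=\NA(X,\K)$ for part (a) and $D:=\SE(X)$ for part (b); both are residual in $X^*$ by hypothesis (recall that $\SE(X)$ is $G_\delta$), and both are balanced cones, so $\Phi_c(S)\in D$ forces $S(y_S)\in D$ for $S\in\mathcal{J}_c$ because $y_S\in\T y_c$. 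Then $\Phi_c^{-1}(D)\cap\mathcal{J}_c$ is residual in $\mathcal{J}_c$ (intersect the residual sets $\Phi_c^{-1}(D)$ and $\mathcal{J}_0$, then restrict to the relatively open $\mathcal{J}_c$), and since $\mathcal{J}_0$ is the disjoint union of the clopen pieces $\mathcal{J}_c$, the set $\mathcal{A}:=\bigcup_c\bigl(\Phi_c^{-1}(D)\cap\mathcal{J}_c\bigr)$ is residual in $\mathcal{J}_0$, hence in $\mathcal{J}$; this last Baire-category bookkeeping works for an arbitrary (possibly uncountable) number of clopen pieces, because a countable union of nowhere dense sets, one inside each member of an open partition, is again nowhere dense.

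Finally, for $S\in\mathcal{A}$ put $T:=\Psi^{-1}(S)$, so that $\|T\|=\|T^*\|=\|S\|=\|S(y_S)\|=\|T^*y_S\|$ with $y_S\in S_Y\subseteq S_{Y^{**}}$ and $T^*y_S=S(y_S)$. In case (a), $S(y_S)\in\NA(X,\K)$, so Fact~\ref{fact:containinginNA(X,Y)} gives $T\in\NA(X,Y^*)$, and therefore $\Psi^{-1}(\mathcal{A})\subseteq\NA(X,Y^*)\cap\mathcal{I}(X,Y^*)$ is residual in $\mathcal{I}(X,Y^*)$. In case (b), $S(y_S)\in\SE(X)$ is a point of Fr\'echet differentiability of the norm of $X^*$ and $S$ absolutely strongly exposes $y_S\in S_Y$, so Proposition~\ref{prop:Heinrich} makes $S$ a point of Fr\'echet differentiability of the norm of $\Lin(Y,X^*)$; since $\Psi$ is an isometric isomorphism, $T$ is such a point for the norm of $\Lin(X,Y^*)$, and these points form a dense subset of $\mathcal{I}(X,Y^*)$. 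The ``in particular'' follows again from Proposition~\ref{prop:Heinrich}, since every Fr\'echet differentiability point of the norm of $\Lin(X,Y^*)$ lies in $\ASE(X,Y^*)$. The only genuinely delicate step is the clopen partition claim, where perturbation-stability of the absolutely strongly exposing point must be combined with discreteness up to rotations, after which one has to verify that the Baire-category argument survives the passage to an uncountable partition.
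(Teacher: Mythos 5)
Your proof is correct, but it follows a genuinely different route from the paper's. The paper transports everything through $\Psi$ exactly as you do, but then builds a decreasing sequence of open \emph{dense} sets $\mathcal A_n$ consisting of operators $T$ that admit a strongly exposed point $z_0$ with $T(z_0)\in\bigcap_{k\leq n}O_k$ and whose near-maximizing set is contained in $\T B(z_0,1/n)$; density of each $\mathcal A_n$ comes from Bourgain--Stegall plus an evaluation-map perturbation (Lemma~\ref{lem:Bourgain}), and discreteness up to rotations is used only at the very end to show that for $T\in\bigcap_n\mathcal A_n$ the witnessing points $z_n$ eventually coincide (up to rotation) with the limiting absolutely strongly exposed point, so that $T(z_0)\in D$. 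You instead isolate the role of discreteness up front: it is exactly the statement that the $\T$-orbit of the exposed point is a locally constant function on the dense $G_\delta$ set $\ASE(Y,X^*)\cap\mathcal J$, which yields your clopen partition $\{\mathcal J_c\}$; after that you only need one evaluation map per piece, i.e.\ the same mechanism as in Lemma~\ref{lemma:residualinrange} and Corollary~\ref{coro:countably-1}, plus the (correct, and genuinely the delicate point) observation that residuality survives an uncountable union taken over an open partition, since one nowhere dense set inside each member of such a partition unions to a nowhere dense set. Your approach makes the theorem look like the natural ``locally finite'' generalization of the countable case and clarifies what discreteness buys; the paper's approach avoids any uncountable bookkeeping, produces an explicit $G_\delta$ dense subset of the target set, and runs in parallel with the proof of Theorem~\ref{thm:dualRNP}, where the partition trick would not apply because no discreteness of $\wstrexp{B_{Y^*}}$ itself is assumed there. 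All the individual steps you flag as delicate (stability of the exposing point under small perturbations, balancedness of $\NA(X,\K)$ and $\SE(X)$, surjectivity and openness of $\Phi_c$, passage of residuality from $\mathcal J_0$ to $\mathcal J$) check out against the definitions and auxiliary results in the paper.
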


We need the following easy lemma which will be used in the proof of Theorem \ref{thm:RNP}.

\begin{lemma}\label{lem:Bourgain}
Let $Z$, $W$ be Banach spaces, let $E\subseteq W$ be a dense subset, and let $\mathcal{I}'(Z,W)$ be a closed subspace of $\Lin(Z,W)$ such that for every $z\in S_Z$, the bounded linear operator $\Phi_z\colon \mathcal{I}'(Z,W)\longrightarrow W$ given by $\Phi_z(S)=S(z)$ is surjective. Then, given $T \in \ASE(Z, W)\cap \mathcal{I}'(Z,W)$ which absolutely strongly exposes $z_0\in S_Z$ and given $\varepsilon>0$, there exists $G\in\mathcal{I}'(Z,W)$ satisfying:
\begin{enumerate}
\setlength\itemsep{0.3em}
\item $\Vert T-G\Vert<\varepsilon$,
\item $G(z_0)\in E$,
\item There exists $\delta>0$ so that
$$
z\in B_Z \text{ satisfies }\Vert G(z)\Vert> \|G\| -\delta\ \Longrightarrow\ z\in \mathbb{T} B(z_0, \eps).
$$
\end{enumerate}
\end{lemma}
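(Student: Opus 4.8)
The plan is to produce $G$ as a small \emph{one-point} perturbation of $T$: leave $T$ essentially untouched except for nudging the value at $z_0$ into the dense set $E$, using that the evaluation map $\Phi_{z_0}\colon\mathcal{I}'(Z,W)\to W$ is surjective and hence open (by the Open Mapping Theorem, since $\mathcal{I}'(Z,W)$ is closed in $\Lin(Z,W)$ and therefore complete). Before anything else I would rephrase the hypothesis $T\in\ASE(Z,W)$ in the slice formulation already used in the paper when checking that $\ASE$ is $G_\delta$: since $\|Tz_0\|=\|T\|$ and $T$ absolutely strongly exposes $z_0$, for the prescribed $\varepsilon$ there is $\eta>0$ with $\{z\in B_Z\colon \|Tz\|>\|T\|-\eta\}\subseteq\mathbb{T}\,B(z_0,\varepsilon)$ — otherwise one could extract $z_n\in B_Z$ with $\|Tz_n\|\to\|T\|$ and $z_n\notin\mathbb{T}\,B(z_0,\varepsilon)$ for all $n$, contradicting the definition. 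This $\eta$ must be \emph{frozen at the outset}, because it is exactly what will make item (3) a uniform statement about $G$.

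Next I would invoke the Open Mapping Theorem for $\Phi_{z_0}$ to obtain $c>0$ with $c\,B_W\subseteq\Phi_{z_0}\bigl(B_{\mathcal{I}'(Z,W)}\bigr)$. Then I fix $\rho>0$ small (how small will be dictated by the final estimates), use density of $E$ to pick $w\in E$ with $\|w-Tz_0\|<\rho$, and lift $w-Tz_0$ through $\Phi_{z_0}$ to get $R\in\mathcal{I}'(Z,W)$ with $R(z_0)=w-Tz_0$ and $\|R\|\leq\|w-Tz_0\|/c<\rho/c$. Setting $G:=T+R$ (which lies in $\mathcal{I}'(Z,W)$ precisely because it is a subspace) immediately gives $G(z_0)=w\in E$, i.e.\ item (2), and $\|T-G\|=\|R\|<\rho/c<\varepsilon$ provided $\rho<c\varepsilon$, i.e.\ item (1).

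For item (3) I would set $\delta:=\eta-2\|R\|$, positive as soon as one also requires $\rho<c\eta/2$. The verification is the standard "a strong maximum is stable under small perturbations" estimate: from $\|Gz\|\leq\|Tz\|+\|R\|$ and $\|G\|\geq\|T\|-\|R\|$, any $z\in B_Z$ with $\|Gz\|>\|G\|-\delta$ satisfies $\|Tz\|>\|T\|-2\|R\|-\delta=\|T\|-\eta$, hence $z$ falls into the frozen slice and so into $\mathbb{T}\,B(z_0,\varepsilon)$. I do not expect a genuine obstacle: the only delicate point is the order of the quantifiers — fix $\varepsilon$, then $\eta$, then $c$, then choose $\rho$ small relative to $\varepsilon$, $\eta$, and $c$, and only afterwards produce $w$, $R$, $G$, and $\delta$ — together with the two structural uses of the hypothesis on $\mathcal{I}'(Z,W)$, namely that it is a subspace (so $G=T+R\in\mathcal{I}'(Z,W)$) and that it is closed (so the Open Mapping Theorem applies).
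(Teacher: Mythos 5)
Your proof is correct and follows essentially the same route as the paper's: both rest on the openness of the surjective evaluation map $\Phi_{z_0}$ (via the Open Mapping Theorem on the closed, hence complete, subspace $\mathcal{I}'(Z,W)$) together with the stability of the slice condition $\{z\in B_Z\colon\|Sz\|>\|S\|-\delta\}\subseteq\T B(z_0,\eps)$ under small perturbations of $S$. The paper packages this more softly --- it shows that the set of operators within $\eps$ of $T$ satisfying the slice condition is open and contains $T$, so its image under $\Phi_{z_0}$ is a nonempty open subset of $W$ that must meet the dense set $E$ --- whereas you unfold the very same argument into an explicit lift $R$ of $w-Tz_0$ and the estimate $\delta=\eta-2\|R\|$.
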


\begin{proof}
By hypothesis, $\Phi_{z_0}$ is onto, hence open. Define $\mathcal A$ to be the set of those $S\in \mathcal{I}'(Z,W)$ such that $\Vert T-S\Vert<\varepsilon$ and satisfying that there exists $\delta>0$ with the property
$$
z\in B_Z,\  \ \Vert S(z)\Vert> \|S\| -\delta\ \Longrightarrow\ z\in \mathbb{T} B(z_0, \eps).
$$
It is not difficult to prove that $\mathcal A$ is open and, clearly, $T\in \mathcal A$. Hence $\Phi_{z_0}(\mathcal A)$ is a non-empty open subset of $W$. Consequently, $\Phi_{z_0}(\mathcal A)\cap E\neq\emptyset$ and so, $\Phi_{z_0}^{-1}(E)\cap \mathcal A\neq \emptyset$.
\end{proof}

\begin{proof}[Proof of Theorem~\ref{thm:RNP}]
Notice that $\Lin(X,Y^*)$ is isometrically isomorphic to $\Lin(Y,X^*)$ through the surjective isometry $T\longmapsto \Psi(T) :=T^*|_{Y}$ for every $T\in \Lin(X, Y^*)$. Let $D= \bigcap_{n \in\N} O_n$, where $O_n$ is open and dense, be a $G_\delta$ dense subset of $\NA(X,\K)$. As $\T \NA(X,\K)=\NA(X,\K)$, we may and do suppose that $\T O_n=O_n$ for every $n\in \N$.
We claim that the set
\[
\mathcal{C}:=\bigl\{T\in \ASE (Y,X^*)\cap \Psi(\mathcal{I}(X,Y))\colon T\text{ attains its norm at some $y_0 \in B_Y$ with $Ty_0\in D$}\bigr\}
\]
is residual in $\Psi(\mathcal{I}(X,Y))$. Once the claim is proved, the proof of the theorem finishes. Indeed, $\Psi^{-1}(\mathcal{C})\subseteq \NA(X,Y^*)$ by Fact~\ref{fact:containinginNA(X,Y)}. Since $\Psi^{-1}(\mathcal{C})$ forms a $G_\delta$ dense set, we conclude that $\NA(X,Y^*)$ is residual in $\Lin (X,Y^*)$. If, moreover, $\SE(X)$ is dense in $X^*$, we may take $D=\SE(X)$ and Proposition~\ref{prop:Heinrich} shows that the norm of $\Lin(Y,X^*)$ is Fr\'{e}chet differentiable at every element of $\mathcal{C}$, hence the norm of $\Lin(X,Y^*)$ is Fr\'{e}chet differentiable at every element of $\Psi^{-1}(\mathcal{C})$, which is dense in $\mathcal{I}(X,Y^*)$.

Let us go to prove that the set $\mathcal{C}$ is residual. For each $n \in \N$, define $\mathcal A_n$ to be the set of those $T\in \Psi(\mathcal{I}(X,Y^*))\subset \Lin(Y,X^*)$ with the property that there exists a strongly exposed point $z_0\in B_Y$ such that $T(z_0)\in \bigcap_{k=1}^n O_k$ and that there exists $\delta>0$ satisfying that
\[
\{z\in B_Y \colon \Vert T(z)\Vert>\Vert T\Vert-\delta\} \subseteq \mathbb T B\left(z_0,1/n\right).
\]

\emph{Claim:} {$\mathcal A_n$ is open for every $n \in \N$}. Indeed, given $T\in \mathcal A_n$, take $z_0$ and $\delta>0$ witnessing the defining property of $\mathcal A_n$. Since $T(z_0)\in \bigcap_{k=1}^n O_k$, there exists $r>0$ such that $B(T(x_0),r)\subseteq \bigcap_{k=1}^n O_k$. Take $0< \delta'<\delta$ and choose $\mu<\min\{r,\frac{\delta-\delta'}{2}\}$. Now, if $\Vert G-T\Vert<\mu$ and $G\in \Psi(\mathcal{I}(X,Y^*))$, then
$$
\{z\in B_Y \colon \Vert G(z)\Vert>\Vert G\Vert-\delta'\}\subseteq \{z\in B_Y \colon \Vert T(z)\Vert>\Vert T\Vert-\delta\}\subseteq \mathbb T B\left(z_0,1/n\right).
$$
Besides, $\Vert T(z_0)-G(z_0)\Vert\leq r$, from where $G(z_0)\in \bigcap_{k=1}^n O_k$. This proves that $B(T,\mu)\subseteq \mathcal A_n$.

\emph{Claim:} $\mathcal A_n$ is dense in $\Psi(\mathcal{I}(X,Y^*))$ for each $n \in \N$. To this end, let $G \in \Psi(\mathcal{I}(X,Y^*))\subset \Lin (Y, X^*)$ and $n \in \N$ be fixed. Since $Y$ has the RNP, by Bourgain-Stegall result (see Proposition~\ref{propo:BourgainStegall}), there exists $T \in \ASE(Y, X^*)\cap \Psi(\mathcal{I}(X,Y^*))$ such that $\Vert G-T\Vert< \frac{1}{n}$. By applying Lemma~\ref{lem:Bourgain} for $Z=Y$, $W=X^*$, $E=\bigcap_{k=1}^n O_k$, $\mathcal{I}'(Z,W)=\Psi(\mathcal{I}(X,Y^*))$ and $T \in \ASE (Z, W) \cap \mathcal{I}' (Z, W)$, we can find an element $H \in \mathcal A_n$ with $\Vert T-H\Vert< \frac{1}{n}$, so $\Vert G-H\Vert<\frac{2}{n}$. This shows that $\mathcal A_n$ is $\frac{2}{n}$-dense in $\Psi(\mathcal{I}(X,Y^*))$ for every $n\in \N$. But observe that $\mathcal{A}_{n+1} \subseteq \mathcal{A}_n$ for every $n \in \N$, which implies that $\mathcal{A}_n$ is $\frac{2}{j}$-dense for any $j \geq n$. It follows that each $\mathcal{A}_n$ is actually dense in $\Lin (Y,X^*)$.

Therefore, $\mathcal A := \bigcap_{n\in\N} \mathcal A_n$ is a $G_\delta$ dense subset of $\Psi(\mathcal{I}(X,Y^*))$. Note that every element in $\mathcal A$ is an absolutely strongly exposing operator. Indeed, take $T\in \mathcal A$. Then, for every $n\in\mathbb N$, we may find a strongly exposed point $z_n\in B_Y$ with the property that there exists $\delta_n>0$ so that
\[
\{z\in B_Y\colon \Vert T(z)\Vert>\Vert T\Vert-\delta_n\}\subseteq \mathbb T B\left(z_n,\frac{1}{n}\right).
\]
It is immediate that $\Vert T(z_n)\Vert\longrightarrow 1$, from where the property defining $\mathcal A_n$ implies that there is a sequence $\{\theta_n\}$ in $\T$ such that $\{\theta_n z_n\}$ is a Cauchy sequence in $Y$. Since $Y$ is complete, we may take $z_0\in B_Y$ to be the limit of $(\theta_n z_n)$. It is immediate that $T$ absolutely strongly exposes $z_0$, hence $z_0\in \strexp{B_Y}$. By the discreteness assumption on the strongly exposed points of $B_Y$, we get that $z_0=\theta_n'z_n$ with $\theta_n'\in \T$ holds for every $n \geq n_0$ for suitable $n_0\in\mathbb N$. Consequently, we have that
$$
T(z_0)=T(\theta'_n z_n)\in\bigcap\limits_{k=1}^n \T O_k=\bigcap\limits_{k=1}^n O_k \text{ for every } n \geq n_0.
$$
By the arbitrariness of $n \in \N$, we conclude that $T(z_0)\in D$ which shows that $T\in \mathcal{C}$. Hence, the set $\mathcal{C}$ contains the $G_\delta$ dense subset $\mathcal{A}$ of $\Psi(\mathcal{I}(X,Y^*))$, finishing the proof.
\end{proof}

Notice that the above Theorem \ref{thm:RNP} is applicable to a closed subspace $\mathcal{I} (X, Y^*)$ of $\Lin (X,Y^*)$ when $Y = \ell_1 (\Gamma)$ for some set $\Gamma$. But, in this case, $Y^* = \ell_\infty (\Gamma)$ readily has property $\beta$, so the same result can be achieved from Theorem \ref{thm:quasibeta}.

The next result is somehow a dual version of the previous theorem, but the discreteness assumption on strongly exposed points is slightly different.

\begin{theorem}\label{thm:dualRNP}
Let $X$ be a Banach space and let $Y$ be a Banach space such that $Y^*$ has the RNP and that for every sequence $\{y_n^*\}$ of elements of $\wstrexp{B_{Y^*}}$ which converges to an element $y_0^*\in \strexp{B_{Y^*}}$, there is $n_0\in \N$ and a sequence $\{\theta_n\}$ in $\T$ such that $y_n^*=\theta_n y_0^*$ for every $n\geq n_0$. Let $\mathcal{I}(X,Y)$ be a closed subspace of $\Lin(X,Y)$ containing rank one operators.
\begin{enumerate}
\setlength\itemsep{0.3em}
  \item[(a)] If $\NA(X,\K)$ is residual, then $\NA(X,Y)\cap \mathcal{I}(X,Y)$ is residual in $\mathcal{I}(X,Y)$.
  \item[(b)] If $\SE(X)$ is dense in $X^*$, then the elements of $\mathcal{I}(X,Y)$ at which the norm of $\Lin(X,Y)$ is Fr\'{e}chet differentiable are dense in $\mathcal{I}(X,Y)$. In particular, $\ASE(X,Y)\cap \mathcal{I}(X,Y)$ is dense in $\mathcal{I}(X,Y)$.
\end{enumerate}
\end{theorem}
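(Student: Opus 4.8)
The plan is to transcribe the proof of Theorem~\ref{thm:RNP}, interchanging the roles of $Y$ and $Y^*$ throughout. I would work with the isometric embedding $\Psi\colon\Lin(X,Y)\longrightarrow\Linw(Y^*,X^*)$, $\Psi(T)=T^*$, whose range is $\Linw(Y^*,X^*)$ and which maps $\mathcal{I}(X,Y)$ onto a closed subspace $\Psi(\mathcal{I}(X,Y))$ of $\Linw(Y^*,X^*)$ containing $\Psi(x^*\otimes y)$ for all $x^*\in X^*$, $y\in Y$. Write $D=\bigcap_n O_n$ for a dense $G_\delta$ subset of $\NA(X,\K)$ (in case (b), of $\SE(X)$), with each $O_n$ open, dense and $\T$-invariant, exactly as in the proof of Theorem~\ref{thm:RNP}. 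The whole point is to prove that
$$
\mathcal{C}:=\bigl\{T\in\ASE(Y^*,X^*)\cap\Psi(\mathcal{I}(X,Y))\colon T \text{ attains its norm at some } y_0^*\in B_{Y^*} \text{ with } Ty_0^*\in D\bigr\}
$$
is residual in $\Psi(\mathcal{I}(X,Y))$. Once this is done, $\Psi^{-1}(\mathcal{C})\subseteq\NA(X,Y)\cap\mathcal{I}(X,Y)$ by Fact~\ref{fact:containinginNA(X,Y)}, which gives (a); and when $D=\SE(X)$, Proposition~\ref{prop:Heinrich} applied inside $\Lin(Y^*,X^*)$ (and transferred via $\Psi$, as in the proofs of Corollary~\ref{coro:countably-2} and Example~\ref{example:separablepolyhedralcompact}) shows that the norm of $\Lin(X,Y)$ is Fr\'echet differentiable at every point of $\Psi^{-1}(\mathcal{C})$, which gives (b).

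For the residuality of $\mathcal{C}$, I would define, for each $n\in\N$, the set $\mathcal{A}_n$ of those $T\in\Psi(\mathcal{I}(X,Y))$ for which there exist $y_0^*\in\wstrexp{B_{Y^*}}$ and $\delta>0$ with $Ty_0^*\in\bigcap_{k=1}^n O_k$ and $\{y^*\in B_{Y^*}\colon\|Ty^*\|>\|T\|-\delta\}\subseteq\T B(y_0^*,1/n)$. Just as in Theorem~\ref{thm:RNP}, each $\mathcal{A}_n$ is open and $\mathcal{A}_{n+1}\subseteq\mathcal{A}_n$, and $\mathcal{A}:=\bigcap_n\mathcal{A}_n$ is contained in $\mathcal{C}$. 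The argument for this last inclusion is where the hypothesis on $Y^*$ enters: given $T\in\mathcal{A}$, one extracts witnessing points $y_n^*\in\wstrexp{B_{Y^*}}$ with $\|Ty_n^*\|\to\|T\|$, rotations $\theta_n\in\T$ making $\{\theta_n y_n^*\}$ a Cauchy sequence with limit $y_0^*\in B_{Y^*}$, and checks that $T$ absolutely strongly exposes $y_0^*$, so that $y_0^*\in\strexp{B_{Y^*}}$; since $\{\theta_n y_n^*\}\subseteq\wstrexp{B_{Y^*}}$ converges to $y_0^*\in\strexp{B_{Y^*}}$, the hypothesis provides $n_0\in\N$ and $\{\sigma_n\}\subseteq\T$ with $\theta_n y_n^*=\sigma_n y_0^*$ for $n\geq n_0$, whence $Ty_0^*=\overline{\sigma_n}\theta_n\,Ty_n^*\in\bigcap_{k=1}^n O_k$ for every $n\geq n_0$ (using $\T O_k=O_k$) and therefore $Ty_0^*\in D$, i.e.\ $T\in\mathcal{C}$.

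It remains to prove the density of each $\mathcal{A}_n$, which is the heart of the matter. Given $G=\Psi(G_0)\in\Psi(\mathcal{I}(X,Y))$ and $n\in\N$, since $Y^*$ has the RNP I would use Proposition~\ref{propo:BourgainStegall}(b)---or, more precisely, the weak-star Stegall variational principle behind it, which additionally controls the exposed point---to obtain a rank-one perturbation $S=G_0+\rho\,x^*\otimes y\in\mathcal{I}(X,Y)$, with $\|S-G_0\|$ arbitrarily small, such that $T:=S^*\in\ASE(Y^*,X^*)\cap\Psi(\mathcal{I}(X,Y))$ and $T$ attains its norm at a point $y_1^*\in\wstrexp{B_{Y^*}}$. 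Because $y_1^*$ is $w^*$-strongly exposed it attains its norm on $Y$, so the evaluation map $R\longmapsto R(y_1^*)$ from $\Psi(\mathcal{I}(X,Y))$ to $X^*$ is onto (it sends $\Psi(x_0^*\otimes y_1)$ to $x_0^*$, where $y_1^*(y_1)=1$), and a verbatim adaptation of Lemma~\ref{lem:Bourgain} (with $Z=Y^*$, $W=X^*$, $E=\bigcap_{k=1}^n O_k$ and $\mathcal{I}'(Z,W)=\Psi(\mathcal{I}(X,Y))$) produces $H\in\mathcal{A}_n$ with $\|H-T\|$ small; thus $\mathcal{A}_n$ is $\frac{2}{n}$-dense, and by the nestedness it is dense. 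The genuinely new point, and the only real obstacle compared with Theorem~\ref{thm:RNP}, is precisely this refinement of Proposition~\ref{propo:BourgainStegall}(b): that near any operator there is one in $\ASE(Y^*,X^*)\cap\Psi(\mathcal{I}(X,Y))$ absolutely strongly exposing a $w^*$-strongly exposed point of $B_{Y^*}$---equivalently, that one may feed Lemma~\ref{lem:RMI}(3) with an operator attaining its norm at such a point---after which the rest is a faithful copy of the argument for Theorem~\ref{thm:RNP}.
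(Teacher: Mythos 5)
Your architecture matches the paper's: the same nested sets $\mathcal{A}_n$, the same perturbation at a $w^*$-strongly exposed point via the surjective evaluation map, and the same final limiting argument in which the discreteness hypothesis turns the witnessing points into rotations of the absolutely strongly exposed point, giving $Ty_0^*\in D$. The gap is in the density step, exactly where you yourself locate ``the only real obstacle'': you assert that the weak-star Stegall variational principle behind Proposition~\ref{propo:BourgainStegall}(b) ``additionally controls the exposed point'' and yields $S$ with $S^*\in\ASE(Y^*,X^*)$ absolutely strongly exposing some $y_1^*\in\wstrexp{B_{Y^*}}$. It does not. The point $y_1^*$ absolutely strongly exposed by $S^*$ is strongly exposed by $S^{**}x^{**}\in Y^{**}$ for any $x^{**}$ norming $S^*y_1^*$, hence lies in $\strexp{B_{Y^*}}$; but for it to be \emph{$w^*$-}strongly exposed one needs a strongly exposing functional belonging to $Y$ (which would follow if, say, $S^*y_1^*\in\NA(X,\K)$), and nothing in the variational principle provides this. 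So the density of your $\mathcal{A}_n$ --- and with it the whole proof --- is not established.

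The paper closes this gap by an intermediate recentering step. It first records that the sets $\mathcal{A}_\eps$ of operators $G$ with $S(G^*,\eta)\subseteq\T B(y_0^*,\eps)$ for some $y_0^*\in\strexp{B_{Y^*}}$ are dense (this is what Bourgain--Stegall actually gives), and then proves $\mathcal{A}_\eps\subseteq\mathcal{B}_\eps$, where $\mathcal{B}_\eps$ has center $z_0^*\in\wstrexp{B_{Y^*}}$ and radius $2\eps$: since $Y^*$ has the RNP, $B_{Y^*}=\overline{\co}^{w^*}\bigl(\wstrexp{B_{Y^*}}\bigr)$, and because $y^*\mapsto\|G^*y^*\|$ is convex and $w^*$-lower semicontinuous the slice $S(G^*,\eta)$ must meet $\wstrexp{B_{Y^*}}$; any such $z_0^*$ is within $\eps$ of a rotation of $y_0^*$ and serves as the new center at the cost of doubling the radius. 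With the slice recentred at $z_0^*$, the evaluation $T\mapsto T^*(z_0^*)$ is onto and your Lemma~\ref{lem:Bourgain} argument goes through verbatim. (Alternatively, you could repair your own claim by invoking the theorem's discreteness hypothesis already at this stage: picking $z_n^*\in\wstrexp{B_{Y^*}}\cap S(S^*,\eta_n)$ with $\eta_n\to0$ forces $\theta_nz_n^*\to y_1^*$, and the hypothesis then yields $y_1^*\in\T\,\wstrexp{B_{Y^*}}$. Either this argument or the paper's must be supplied; neither is a consequence of the variational principle alone.)
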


\begin{proof}
The idea of the proof is more or less similar to the one of Theorem \ref{thm:RNP}. We give a proof for the sake of completeness.
For each $\eps >0$, let us consider the set
\[
\mathcal{A}_\eps := \{ G \in \mathcal I(X,Y)\colon \exists \, \eta >0 \text{ and } y_0^* \in \strexp{B_{Y^*}} \text{ so that } S(G^*, \eta) \subseteq \mathbb{T} B( y_0^*, \eps ) \},
\]
where $S(G^*, \eta) := \{ y^* \in B_{Y^*}\colon \|G^*(y^*) \| > \|G^* \| - \eta \}$.

\emph{Claim:} for each $\eps >0$, $\mathcal{A}_\eps$ is dense. To this end, use Proposition~\ref{propo:BourgainStegall} to get that $A_\varepsilon$ is dense. Indeed, given $T\in  \mathcal{I}(X,Y)$ and $\delta>0$, there is $0<\rho<\delta$, $x^*\in X^*$ and $y\in Y$ so that $S:=T+\rho x^*\otimes y$, which is an element of $\mathcal{I} (X,Y^*)$, enjoys that $S^*\in \ASE(Y^*,X^*)$. It is obvious that $S\in \mathcal{A}_\varepsilon$. Since $\Vert T-S\Vert<\delta$, we conclude the density.

\emph{Claim:} for each $\eps>0$, $\mathcal{A}_\eps$ is contained in
\[
\mathcal{B}_\eps:=\bigl\{ G \in \mathcal{I} (X,Y^*)\colon \exists \eta >0 \text{ and } z_0^* \in \wstrexp{B_{Y^*}} \text{ so that } S(G^*, \eta) \subseteq \mathbb{T} B( z_0^*, 2\eps ) \bigr\}.
\]
Indeed, let $G \in \mathcal{A}_\eps$ be given. Let $\eta >0$ and $y_0^* \in \strexp{B_{Y^*}}$ be such that
$S(G^*, \eta) \subseteq \mathbb{T} B( y_0^*, \eps)$. Since $Y^*$ has the RNP, we have that $B_{Y^*} = \overline{\text{conv}}^{w^*} (\wstrexp{B_{Y^*}})$. Thus, there exists $z_0^* \in \wstrexp{B_{Y^*}}$ such that $z_0^* \in S(G^*, \eta)$. Find $\lambda \in \mathbb{T}$ so that $\| z_0^* - \lambda y_0^* \| < \eps$. Now, if $y^* \in S(G^*, \eta)$, then $$\|y^* - \mu \overline{\lambda} z_0^* \| \leq \| y^* - \mu y_0^* \| + \| \mu y_0^* - \mu \overline{\lambda} z_0^* \| < 2\eps$$ for some $\mu \in \mathbb{T}$. This implies that $S(G^*, \eta) \subseteq \mathbb{T} B(z_0^*, 2\eps)$; hence $G\in \mathcal{B}_\eps$.

Let $D= \bigcap_{n \in \N} O_n$ be a $G_\delta$ subset of $\NA (X, \mathbb{K})$ which is dense in $X^*$. Without loss of generality, we may assume that $\mathbb{T} O_n = O_n$ for every $n \in \N$. For each $n\in \N$, define $\mathcal{C}_n$ to be the set of those $T \in \mathcal I(X,Y)$ with the property that there exists $\eta >0$ and $z^*_0 \in \wstrexp{B_{Y^*}}$ such that $S(T^*, \eta) \subseteq \mathbb{T} B(z_0^*, \frac{2}{n})$ and $T^* (z_0^*) \in \bigcap_{j=1}^n O_j$.

\emph{Claim:} $\mathcal{C}_n$ is open and dense for every $n\in \N$. Clearly, $\mathcal C_n$ is open for every $n\in\mathbb N$, and the proof follows from the idea of the first Claim in the proof of Theorem~\ref{thm:RNP}. Let us prove that $\mathcal C_n$ is dense.
Indeed, let $G \in \mathcal I(X,Y)$ and $n \in \N$ be fixed. Then, by the previous claim, there is $\tilde{G} \in \mathcal{B}_{n^{-1}}$ witnessed by $\eta >0$ and $z_0 \in \wstrexp{B_Y^*}$ such that $\|G- \tilde{G} \| < \frac{1}{n}$. If we consider the set $\mathcal{A}$ of elements $T \in \mathcal I(X,Y)$ such that $\|T- G \| < \frac{2}{n}$ and satisfying that there exists $\delta >0$ so that $S(T^*, \delta) \subseteq \mathbb{T} B(z_0^*, \frac{2}{n})$, it is clear that $\tilde{G} \in \mathcal{A}$; hence $\mathcal{A}$ is non-empty. By considering the map $\Phi\colon \mathcal I(X,Y) \longrightarrow X^*$ given as $\Phi (T)=T^*(z_0^*)$ we conclude, with the same argument to that of Lemma \ref{lem:Bourgain}, we can observe that there exists $H \in \mathcal I(X,Y)$ satisfying $\|H- G \| < \frac{2}{n}$, $H^*(z_0^*) \in \bigcap_{j=1}^n O_j$ and there exists $\delta >0$ so that $S(H^*, \delta) \subseteq \mathbb{T} B(z_0^*, \frac{2}{n})$. This shows that each $\mathcal{C}_n$ is $\frac{2}{n}$-dense. As the sequence $\{\mathcal{C}_n\}$ is decreasing, it actually follows that each $\mathcal{C}_n$ is dense.

Finally, $\bigcap_n \mathcal{C}_{n}$ is a $G_\delta$ dense subset in $\mathcal I(X,Y)$. If $G \in \bigcap_n \mathcal{C}_{n}$, then there would be $z_0^* \in \strexp{B_{Y^*}}$ and a sequence $\{z_n^*\} \subseteq \wstrexp{B_{Y^*}}$ converging to $z_0^*$ such that $\|G^* (z_0^*) \| = \|G^*\|$ and $G^*(z_n^*) \in \bigcap_{j=1}^n O_j$ for every $n \in \mathbb{N}$. By our assumption, there is a sequence $\{ \theta_n\}$ in $\mathbb{T}$ such that $z_n^* = \theta_n z_0^*$ for all sufficiently large $n$. Consequently, we obtain that $G^* (z_0^*) \in \bigcap_{j=1}^n O_j$ for all large $n$ which implies that $G^* (z_0^*) \in D$. Finally, by taking pre-adjoint, we conclude that $\NA (X, Y)$ is a residual set in $\mathcal{L}(X,Y)$.
\end{proof}

Note that Theorem~\ref{thm:dualRNP} applies to isometric preduals of $\ell_1(\Gamma)$ as, clearly, the set of extreme points of its dual unit ball is discrete up to rotations.

\begin{example}\label{example:ell_1_gamma}
Let $X$ be a Banach space and let $Y$ be a predual of $\ell_1(\Gamma)$ for some set $\Gamma$. Let $\mathcal{I}(X,Y)$ be a closed subspace of $\Lin(X,Y)$ containing rank one operators.
\begin{enumerate}
\setlength\itemsep{0.3em}
  \item[(a)] If $\NA(X,\K)$ is residual, then $\NA(X,Y)\cap \mathcal{I}(X,Y)$ is residual in $\mathcal{I}(X,Y)$.
  \item[(b)] If $\SE(X)$ is dense in $X^*$, then the elements of $\mathcal{I}(X,Y)$ at which the norm of $\Lin(X,Y)$ is Fr\'{e}chet differentiable are dense in $\mathcal{I}(X,Y)$. In particular, $\ASE(X,Y)\cap \mathcal{I}(X,Y)$ is dense in $\mathcal{I}(X,Y)$.
\end{enumerate}
\end{example}

Beside the case when $Y$ is either $\ell_1 (\Gamma)$ or an isometric predual of $\ell_1(\Gamma)$, some applications of Theorem \ref{thm:RNP} and \ref{thm:dualRNP} to the setting of the Lipschitz-free space over a certain metric space will be provided in Section~\ref{sect:applicationsLipschitz-bilinear}.

\section{Residuality and Fr\'{e}chet differentiability in the space of operators}\label{section:residuality}
In this section we address the following natural question.
\begin{equation}
  \tag{Q3}\label{Q3}
  \parbox{\dimexpr\linewidth-4em}{%
    \strut
\slshape Does the residuality of $\NA(X,Y)$ imply the density of $\ASE(X,Y)$ in $\Lin (X,Y)$?
    \strut
  }
\end{equation}
In the case when $Y$ is one-dimensional, the residuality of $\NA(X, \mathbb{K})$ is closely related to the Fr\'echet differentiability of the dual norm on $X^*$ (and hence to the geometric structure of the unit ball of $X$ due to the \v{S}mulyan test). For instance, it has been shown by Guirao, Montesinos, and Zizler \cite[Theorem~3.1]{GMZ} that if $X$ is separable, then $\NA(X, \mathbb{K})$ is residual if and only if the dual norm on $X^*$ is Fr\'echet differentiable on a dense subset of $X^*$, hence if and only if $\SE(X)$ is dense in $X^*$ (by \cite[Corollary 1.5]{DGZ}). This result has been extended by Moors and Tan \cite{MoorsTan} showing that the same conclusion holds for \emph{dual differentiation} Banach spaces. Examples of dual differentiation spaces are those Banach space which can be equivalently renormed to be LUR \cite{GKe} and also Banach spaces whose duals are weak Asplund (in particular, RNP spaces) \cite{GKMS}. It is actually an open question whether every Banach space is a dual differentiation space.

Under separability assumptions on $X$ and $Y^*$, the previous result of Guirao et al.\ can be extended to the case of bounded linear operators from $X$ to $Y$.

\begin{theorem}\label{thm:residual}
Let $X$ and $Y$ be Banach spaces. Suppose that $X$ and $Y^*$ are separable, and that $\NA(X,Y)$ is residual. Then, the points of $\Lin(X,Y)$ at which the norm is Fr\'echet differentiable are dense in $\Lin(X,Y)$, in particular, $\ASE (X, Y)$ is dense in $\Lin (X,Y)$.
\end{theorem}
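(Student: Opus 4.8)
The strategy is to reduce the operator statement to the scalar result of Guirao–Montesinos–Zizler via a separable-reduction and an adjoint/range-space argument, exploiting that $Y^*$ separable implies $Y$ is Asplund-adjacent enough for the relevant machinery. First I would recall from Proposition~\ref{prop:Heinrich} that a point $T \in \Lin(X,Y)$ is a Fréchet differentiability point of the norm precisely when $T$ absolutely strongly exposes some $x_0 \in S_X$ and $Tx_0$ is a Fréchet smooth point of $Y$; hence the ``in particular'' clause is automatic once the density of Fréchet points is established, and the whole burden is to produce a dense set of such $T$. Since $\NA(X,Y)$ is assumed residual, by Proposition~\ref{prop:affine} translations of it are residual, so it suffices to show that the residual set $\NA(X,Y)$ meets every nonempty open set in a point which, after a small perturbation, becomes a Fréchet differentiability point.

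The key step is to convert norm attainment into \emph{absolute strong} exposition. The plan is: given $T \in \NA(X,Y)$ attaining its norm at $x_0 \in S_X$, pick $y^* \in S_{Y^*}$ with $\re y^*(Tx_0) = \|T\|$, so that $T^*y^*$ attains its norm at $x_0$. Because $X$ is separable, $X^*$ admits a dense $G_\delta$ set of Fréchet differentiability points of the dual norm is \emph{not} automatic — but we only need that $\SE(X)$ is dense, which by the Guirao–Montesinos–Zizler theorem \cite[Theorem~3.1]{GMZ} follows from residuality of $\NA(X,\mathbb K)$; and residuality of $\NA(X,Y)$ forces residuality of $\NA(X,\mathbb K)$ by composing with a norm-one functional on $Y$ (or, more carefully, by a slicing/restriction argument). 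Thus $\SE(X)$ is dense in $X^*$. Now I would run the argument of Theorem~\ref{thm:RNP}/\ref{thm:dualRNP} in the present separable setting: one builds, for each $n$, the open set $\mathcal A_n$ of operators $T$ for which $S(T,\eta) \subseteq \mathbb T B(x_0, 1/n)$ for some $x_0$ and $\eta$, with the extra requirement $Tx_0$ lying in a prescribed dense $G_\delta$ subset of Fréchet points of $Y$ (available since $Y^*$ separable implies $Y$ has a dense set of Fréchet smooth points of its norm, again by Šmulyan/\cite[Theorem~3.1]{GMZ} applied in $Y$). Residuality of $\NA(X,Y)$ combined with the perturbation lemmas of Lemma~\ref{lem:RMI} and the openness arguments show each $\mathcal A_n$ is dense open; intersecting gives a dense $G_\delta$ consisting of absolutely strongly exposing operators whose exposed vector maps into a Fréchet point, i.e.\ Fréchet differentiability points of $\Lin(X,Y)$.

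The main obstacle I anticipate is the passage from ``norm attained at $x_0$'' to ``strongly, and then absolutely strongly, exposed at $x_0$'' without an RNP hypothesis on $X$: in Theorems~\ref{thm:RNP} and \ref{thm:dualRNP} one uses Bourgain–Stegall to get absolute strong exposition for free, but here we have neither RNP of $X$ nor of $Y^*$. The resolution must come from the separability of both $X$ and $Y^*$ together with residuality of $\NA(X,Y)$ itself: one shows that generically (on a residual set) a norm-attaining operator has the property that its set of ``almost-maximizers'' shrinks — this is exactly where a Baire-category / dense-$G_\delta$ argument on the family $\mathcal A_n$ must be made to work, using that $Y^*$ separable guarantees the relevant slices can be controlled in norm. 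A secondary technical point is verifying that $\NA(X,\mathbb K)$ is residual given $\NA(X,Y)$ is (needed to invoke \cite[Theorem~3.1]{GMZ} for density of $\SE(X)$), which I would handle by a direct slicing argument: fix $y_0 \in S_Y$ and $y_0^* \in S_{Y^*}$ with $y_0^*(y_0)=1$, and relate residuality of $\{x^* : x^* \otimes y_0 \in \NA(X,Y)\}$ (residual by Proposition~\ref{prop:affine}(a) applied in a suitable guise) to residuality of $\NA(X,\mathbb K)$.
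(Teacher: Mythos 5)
Your outline has two genuine gaps, and the paper's actual proof takes a quite different route that avoids both. First, the reduction you rely on --- that residuality of $\NA(X,Y)$ forces residuality of $\NA(X,\K)$ --- is precisely what the authors state they do \emph{not} know how to prove (see the remark following the theorem). Your proposed fix, restricting to the rank-one slice $\{x^*\otimes y_0\colon x^*\in X^*\}$, does not work: for nontrivial $Y$ this is a proper closed, hence nowhere dense, subspace of $\Lin(X,Y)$, and a residual subset of $\Lin(X,Y)$ need not meet such a subspace in a residual (or even nonempty) subset of it. Second, and more seriously, the core of your plan is to show that the sets $\mathcal A_n$ of operators with small slices are dense; in Theorems \ref{thm:RNP} and \ref{thm:dualRNP} this density is supplied by Bourgain--Stegall, i.e.\ by an RNP hypothesis, and here there is none. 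You identify this as the main obstacle yourself, but the resolution you sketch --- that residuality plus separability should make the almost-maximizer sets shrink generically --- is exactly the statement to be proved, and no mechanism is offered: mere norm attainment of $T$ at some $x_0$ gives no control on $S(T,\eta)$, and Lemma \ref{lem:RMI} only applies when $x_0$ is already a strongly exposed point.

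The paper's proof goes the other way around and never constructs absolutely strongly exposing operators directly. Taking a dense $G_\delta$ set $P\subseteq\NA(X,Y)$, it shows that the set $A$ of points of $P$ at which the operator norm fails to be Fr\'echet differentiable is meager in the Baire space $P$. The key observation is that every $T\in P$ admits a support functional of the special form $x_T\otimes y_T^*$ with $x_T\in B_X$ and $y_T^*\in B_{Y^*}$, and the set $B_X\otimes B_{Y^*}\subseteq\Lin(X,Y)^*$ is separable precisely because $X$ and $Y^*$ are. One then decomposes $A=\bigcup_{m,k}A_{m,k}$ according to the quantitative failure of differentiability (the integer $m$) and the location of the support functional in a countable cover by balls of radius $(12m)^{-1}$, and a direct three-operator estimate in the spirit of \cite[Theorem 3.1]{GMZ} shows each $A_{m,k}$ is nowhere dense. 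The statement about $\ASE(X,Y)$ is then read off from Proposition \ref{prop:Heinrich} in the direction opposite to the one you use: Fr\'echet differentiability of the norm at $T$ implies $T\in\ASE(X,Y)$. Neither the density of $\SE(X)$ nor any smoothness of the norm of $Y$ is ever needed.
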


\begin{proof}
Let $P$ be a $G_\delta$ subset of $\NA(X,Y)$ which is dense in $\Lin (X,Y)$. From the fact that a $G_\delta$ dense subset of a Baire space is again Baire, $P$ is a Baire space (see \cite[Theorem 589]{MZZ2015}, for instance). Let $A \subseteq P$ be the set of all points where the norm of $\Lin (X,Y)$ is not Fr\'echet differentiable.
We claim that $A$ is meager in $P$, which is enough to finish the proof. Note that the denseness of $\ASE(X,Y)$ in $\Lin (X,Y)$ is then immediate by  Proposition~\ref{prop:Heinrich}.

For each $T \in A$, there exists $x_T \in S_X$ where $T$ attains its norm. Take $y_T^* \in S_{Y^*}$ so that $y_T^* (T(x_T)) = \|T\|$. Note that for every $G \in \Lin (X,Y)$,
\[
\|T+G\| - \|T \| \geq \re [x_T \otimes y_T^*] (G),
\]
where the tensor $x_T \otimes y_T^*$ is considered as an element of $\Lin (X,Y)^*$. Using that $T$ is not a point of Fr\'echet differentiability, we can take $m_T \in \mathbb{N}$ such that
\[
\limsup_{G \rightarrow 0} \frac{ \|T+G\|-\|T\| - \re [x_T \otimes y_T^*](G)}{\|G\|} > \frac{1}{m_T}.
\]
For each $m \in \N$, let $A_m := \{ T \in A \colon m_T = m\}$ and consider a cover of $$B_X \otimes B_{Y^*} := \{x\otimes y^*\colon x \in B_X, y \in B_{Y^*} \} \subseteq \Lin (X,Y)^*$$ by open balls of radius $(12m)^{-1}$. Since $B_X \otimes B_{Y^*}$ is separable, by the Lindel\"of property, there is a countable subcover $\{B_k^m\}$ of open balls of radius $(12m)^{-1}$. For $m, k \in \N$, define $A_{m,k} : = \{T \in A_m\colon x_T \otimes y_T^* \in B_{k}^m\}$. Observe that $$\|x_T \otimes y_T^* - x_G \otimes y_G^*\| < (6m)^{-1}$$ for all $T, G \in A_{m,k}$. From the \v{S}mulyan test, $A = \bigcup_{m,k} A_{m,k}$. We claim that $A_{m,k}$ is nowhere dense in $P$ for each $m, k \in \N$, which will show that $A$ is meager in $P$, finishing then the proof of the theorem. Assume to the contrary that there is an nonempty open subset $O$ of $\overline{A_{m,k}}$ for some $m,k \in \N$. Pick $T \in O \cap A_{m,k}$ and $r >0$ such that $B(T,r) \subseteq O$. Since $T \in A_m$, we can find $H \in \Lin (X,Y)$ such that $\|H\| < \frac{r}{2}$ and
\[
\|T+H\|-\|T\| > \frac{\|H\|}{m} + (x_T \otimes y_T^* )(H).
\]
Since $P$ is dense in $\Lin (X,Y)$, we may assume that $T+H \in P$.

Note that $B\left(T+H, \frac{\|H\|}{3m}\right) \subseteq B(T,r) \subseteq O \subseteq \overline{A_{m,k}}$. Thus, $B\left(T+H, \frac{\|H\|}{3m}\right) \cap A_{m,k}$ is nonempty. Take $G \in B\left(T+H, \frac{\|H\|}{3m}\right) \cap A_{m,k}$. Observe that the following is true:
\begin{enumerate}
\setlength\itemsep{0.3em}
\item $\|T+H-G\| < \frac{\|H\|}{3m}$,
\item $\|T-G\| \leq \|H\| + \frac{\|H\|}{3m} < 2\|H\|$,
\item $\|T+H\| -\|G\| \leq \|T+H-G\| < \frac{\|H\|}{3m}$.  \label{eq:THG}
\end{enumerate}
Then,
\begin{align*}
\|T\|-\|G\| = \|G+(T-G)\|-\|G\| &\geq [x_G\otimes y_G^*](G+(T-G)) - [x_G\otimes y_G^*] (G) \\
&= [x_G \otimes y_G^*] (T-G).
\end{align*}
On the other hand,
\begin{align*}
\|T+H\| - \|G\| &= \|T+H\|-\|T\| +\|T\| -\|G\| \\
&> \frac{\|H\|}{m} + [x_T \otimes y_T^*] (H) + [x_G \otimes y_G^*) (T-G) \\
&= \frac{\|H\|}{m} + [x_T\otimes y_T^*](T+H-G) + [x_G\otimes y_G^* - x_T\otimes y_T^*](T-G) \\
&\geq \frac{\|H\|}{m} - \frac{\|H\|}{3m} - 2\|H\|\frac{1}{6m} = \frac{\|H\|}{3m},
\end{align*}
where we have used (1) and (2) in the last inequality. This contradicts (3). So, we conclude that $A_{m,k}$ is nowhere dense for each $m,k\in\N$.
\end{proof}

\begin{remark}
It is not possible, in general, to get denseness of Fr\'echet differentiable points of $\Lin (X,Y)$ when $\NA(X,Y)$ is residual. For instance, if $X$ has the Radon-Nikod\'ym property, then $\ASE (X, \ell_1)$ is dense in $\Lin (X,\ell_1)$ (hence, $\NA(X,\ell_1)$ is residual), but there is no point in $\Lin (X,\ell_1)$ where the norm on $\Lin (X,\ell_1)$ is Fr\'echet differentiable since the norm of $\ell_1$ is nowhere Fr\'echet differentiable (use Proposition~\ref{prop:Heinrich}). On the other hand, there is no known objection, as far as we know, to get the denseness of $\ASE(X,Y)$ from the residuality of $\NA(X,Y)$ in complete generality. On the other hand, we do not know if the residuality of $\NA(X,Y)$ for a non-trivial $Y$ implies that of $\NA(X,\K)$ as the denseness of $\ASE(X,Y)$ does.
\end{remark}

\begin{remark}\label{remark:residual}
\begin{enumerate}
\setlength\itemsep{0.3em}
\item
If the Banach space $Y$ in Theorem \ref{thm:residual} is \emph{reflexive}, then the denseness of the set of Fr\'echet differentiable points of $\Lin (X,Y)$ can be obtained directly from \cite[Theorem 3.1]{GMZ}. Indeed, in this case we have that $(X \pten Y^*)^*=\Lin(X,Y)$ and the inclusion $\NA (X,Y) \subseteq \NA(X \pten Y^*, \mathbb{K})$ holds. It follows that $\NA(X\pten Y^*,  \mathbb{K})$ is residual. Applying \cite[Theorem~3.1]{GMZ} to $X \pten Y^*$, the dual norm of $(X \pten Y^*)^* = \Lin (X,Y)$ is Fr\'echet differentiable on a $G_\delta$-dense subset.
\item
With the aid of the recent result \cite{AMRR} of A. Avil\'es et al., we can obtain a non separable version of Theorem \ref{thm:residual}. That is, if $X$ is a subspace of a WCG space, $Y$ is a reflexive Banach space, and $\Lin (X, Y) =\mathcal{K}(X, Y)$, then the residuality of $\NA(X,Y)$ implies the denseness of the points of $\Lin (X,Y)$ at which the norm is Fr\'echet differentiable. Indeed, the assumptions show that $X \pten Y^*$ is a subspace of WCG space \cite[Corollary 5.21]{AMRR}. Since WCG spaces are dual differentiation Banach spaces \cite{GKe} and a closed subspace of a dual differentiation Banach space is again a dual differentiation Banach space \cite{GKMS}, the space $X \pten Y^*$ turns to be a dual differentiation Banach space. Since $Y$ is reflexive, arguing as in the above item (1), the residuality of $\NA(X, Y)$ implies the residuality of $\NA (X \pten Y^*, \mathbb{K})$. Now, \cite[Theorem 3]{MoorsTan} proves that the dual norm of $(X \pten Y^*)^* = \Lin (X,Y)$ is Fr\'echet differentiable on a $G_\delta$-dense subset.
\end{enumerate}
\end{remark}

We next present two more related observations, one of them providing a partial solution to an open problem in \cite{ABR}.

\begin{remark}
\begin{enumerate}
\setlength\itemsep{0.3em}
\item
Given separable Banach spaces $X$ and $Y$, if $B_X$ is not dentable and $Y$ is reflexive, then $\NA(X,Y)$ is of the first Baire category. To see this, observe first that $B_{X \pten Y^*}$ is not dentable and it is separable. By a result of Bourgain and Stegall (see the proof of \cite[Theorem~3.5.5]{Bourgin}), $\NA(X \pten Y^*)$ turns to be of the first category in $(X\pten Y^*)^* = \Lin (X,Y)$. Since $\NA (X,Y) \subseteq \NA(X\pten Y^*)$ by the reflexivity of $Y$, we have that $\NA(X,Y)$ is also of the first category in $\Lin (X,Y)$. This observation gives a partial answer to \cite[Problem 7 in p.~12]{ABR}.
\item Let $X$ be a separable Banach space and let $Y$ be a separable reflexive space. If $X$ is convex-transitive and $\NA(X,Y)$ is of the second Baire category, then $X$ must be super-reflexive. Indeed, under the assumption, by the above item, $B_X$ must be dentable. It follows that the norm on $X^*$ is not rough \cite[Proposition 1.11]{DGZ}. Since $X$ is convex-transitive, $X^*$ is convex $w^*$-transitive; hence \cite[Theorem 3.2]{BR1999} implies that $X$ is super-reflexive. This extends the result in \cite{BR1999} that convex-transitive RNP spaces are super-reflexive.
\end{enumerate}
\end{remark}

\section{Applications to the geometry of Lipschitz-free spaces, to strongly norm attaining Lipschitz maps, and to norm attaining bilinear forms}\label{sect:applicationsLipschitz-bilinear}

\subsection{Lipschitz functions spaces and strong norm attainment}

Throughout this subsection, we will only consider real Banach spaces. Given a pointed metric space $M$ and a Banach space $Y$, the notation $\Lip (M, Y)$ denotes the Banach space of all Lipschitz maps $F\colon M \longrightarrow Y$ which vanishes at $0$, endowed with the Lipschitz norm given by
\[
\|F\|_L := \sup \left\{ \frac{\|F(x)-F(y)\|}{d(x,y)}\colon x,y \in M, x\neq y\right\}.
\]
Recall from \cite{CCGMR2019} that $F \in \Lip (M,Y)$ is said to \emph{strongly attain its norm} when the above supremum is actually a maximum, that is, when there exists $x \neq y$ in $M$ such that
\[
\|F\|_L =\frac{\|F(x)-F(y)\|}{d(x,y)}.
\]
By $\SA (M, Y)$, we denote the set of all strongly norm attaining Lipschitz maps in $\Lip (M, Y)$.

There is a connection between the strong norm attainment in spaces of Lipschitz functions and the classical norm attainment in spaces of operators. In order to exhibit it, we need to introduce a bit of notation. Let $M$ be a pointed metric space. We denote by $\delta$ the canonical isometric embedding of $M$ into $\Lip(M,\R)^*$, which is given by $\langle f, \delta(x) \rangle =f(x)$ for $x \in M$ and $f \in \Lip(M,\R)$. We denote by $\mathcal{F}(M)$ the norm-closed linear span of $\delta(M)$ in the dual space $\Lip(M,\R)^*$, which is usually called the \emph{Lipschitz-free space over $M$}, see the papers \cite{Godefroy-survey-2015} and \cite{gk}, and the book \cite{wea5} (where it receives the name of Arens-Eells space) for background on this. It is well known that $\mathcal{F}(M)$ is an isometric predual of the space $\Lip(M,\R)$ \cite[p.~91]{Godefroy-survey-2015}. When $M$ is a pointed metric space and $Y$ is a Banach space, every Lipschitz map $f \colon M \longrightarrow Y$ can be isometrically identified with the continuous linear operator $\widehat{f} \colon \mathcal{F}(M) \longrightarrow Y$ defined by $\widehat{f}(\delta_p)=f(p)$ for every $p \in M$. This mapping completely identifies the spaces $\Lip(M,Y)$ and $\mathcal{L}(\mathcal{F}(M),Y)$. Bearing this fact in mind, the set $\SA(M,Y)$ is identified with the set of those elements of $\mathcal{L}(\mathcal{F}(M),Y)$ which attain their operator norm at some \emph{molecule}, that is, at an element of $\mathcal{F}(M)$ of the form
\[
m_{x,y}:=\frac{\delta(x)-\delta(y)}{d(x,y)}
\]
for $x, y \in M$, $x \neq y$. We write $\Mol{M}$ to denote the set of all molecules of $M$. Note that, since $\Mol{M}$ is balanced and norming for $\Lip(M,\mathbb{R})$, a straightforward application of Hahn-Banach theorem implies that
$$
\overline{\co}(\Mol{M})=B_{\mathcal F(M)}.
$$
From this point of view, it is now clear that when $\SA(M,Y)$ is dense in $\Lip(M,Y)$, then $\NA(\mathcal{F}(M),Y)$ has to be dense in $\mathcal{L}(\mathcal{F}(M),Y)$ a fortiori. The converse result is not true as, for instance, $\NA(\mathcal{F}(M),\R)$ is always dense by the Bishop-Phelps theorem but there are many metric spaces $M$ such that $\SA(M,\R)$ is not dense in $\Lip(M,\R)$ \cite{kms}. See \cite{CCGMR2019,cgmr2020,kms} and references therein for background on the denseness of strongly norm attaining Lipschitz functions.

Of course, if $\SA(M,Y)$ is dense in $\Lip(M,Y)$ for every Banach space $Y$, then $\mathcal{F}(M)$ has property A. However, the question whether the property A of $\mathcal F(M)$ implies that  $\SA(M,Y)$ is dense in $\Lip(M,Y)$ for every Banach space $Y$ is one of the main questions in theory of strong norm attainment of Lipschitz functions (asked at \cite{CCGMR2019,cgmr2020}). It is even open the question whether Lindenstrauss property A of $\mathcal F(M)$ implies that $\SA(M,\mathbb R)$ dense in $\Lip(M,\mathbb R)$. As a consequence of Theorem \ref{thm:LURrenorming}, we obtain the following partial answer to this question.

\begin{corollary}\label{coro:corLipschitzfree}
If $M$ is a separable metric space and $\Free (M)$ has property A, then $\SA (M, \mathbb{R})$ is dense in $\Lip (M, \mathbb{R})$.
\end{corollary}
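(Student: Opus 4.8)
The plan is to combine Corollary~\ref{corollary:PropertyAimpliesSEdense} with the known description of the strongly exposed points of $B_{\Free(M)}$ as molecules. First I would note that, since $M$ is separable, $\Free(M)=\overline{\spann}\,\delta(M)$ is a separable Banach space and hence admits an equivalent LUR norm by \cite{Troy}. As $\Free(M)$ is assumed to have property~A, Corollary~\ref{corollary:PropertyAimpliesSEdense}(a) then gives that $\SE(\Free(M))$ is dense in $\Free(M)^*$. Under the canonical isometric identification $\Free(M)^*\equiv\Lip(M,\mathbb{R})$, this says exactly that the set of Lipschitz functions on $M$ which strongly expose some point of $B_{\Free(M)}$ is dense in $\Lip(M,\mathbb{R})$.

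The second and final step is to check that $\SE(\Free(M))\subseteq\SA(M,\mathbb{R})$. Let $f\in\SE(\Free(M))$ strongly expose a point $\mu\in B_{\Free(M)}$. Since $B_{\Free(M)}=\cconv(\Mol{M})$ is balanced, $f(\mu)=\sup_{B_{\Free(M)}}f=\|f\|_{L}$, so $f$ attains its norm (as a functional on $\Free(M)$) at $\mu$. Now one invokes the structural fact that every strongly exposed point of $B_{\Free(M)}$ is a molecule (see \cite{cgmr2020} and the references therein; alternatively, $\mu$ lies in $\overline{\Mol{M}}$ because a strongly exposing functional attains its supremum over $\cconv(\Mol{M})$ only at norm-limits of molecules, and preserved extreme points of $B_{\Free(M)}$ are molecules). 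Writing $\mu=m_{x,y}$ with $x\neq y$ in $M$, we get $\|f\|_{L}=f(m_{x,y})=\frac{f(x)-f(y)}{d(x,y)}$, i.e.\ $f$ strongly attains its norm. Therefore $\SA(M,\mathbb{R})\supseteq\SE(\Free(M))$, which is dense by the first step, and the corollary follows.

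I expect the only non-routine ingredient to be the identification of the strongly exposed points of $B_{\Free(M)}$ with molecules; the rest is just the already-proved Corollary~\ref{corollary:PropertyAimpliesSEdense} together with soft facts about $\Free(M)$. If one wishes to avoid quoting that structural result, an alternative would be to use only $\mu\in\overline{\Mol{M}}$ and then perturb $f$ slightly --- sharpening its Lipschitz quotient along a pair $(x_{n},y_{n})$ with $m_{x_{n},y_{n}}\to\mu$ --- to produce elements of $\SA(M,\mathbb{R})$ converging to $f$; this is feasible but more technical. Note also that the separability of $M$ is used solely to provide the LUR renorming of $\Free(M)$ required by Corollary~\ref{corollary:PropertyAimpliesSEdense}, and it is not clear how to dispense with it.
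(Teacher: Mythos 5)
Your proposal is correct and follows essentially the same route as the paper: separability of $M$ gives an LUR renorming of $\Free(M)$, Corollary~\ref{corollary:PropertyAimpliesSEdense} gives denseness of $\SE(\Free(M))$ in $\Lip(M,\mathbb{R})$, and the structural fact that strongly exposed points of $B_{\Free(M)}$ are molecules (which the paper quotes as \cite[Proposition 1.1]{CCGMR2019}) yields $\SE(\Free(M))\subseteq\SA(M,\mathbb{R})$. The additional remarks you make (the alternative perturbation argument, the role of separability) are consistent with the paper but not needed.
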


\begin{proof}
Since $M$ is separable, the Lipschitz-free space $\Free (M)$ over $M$ is separable; hence it admits an LUR renorming. Thus, by Corollary \ref{corollary:PropertyAimpliesSEdense}, $\SE (\Free (M))$ is dense in $\Free (M)^* = \Lip (M,\mathbb{R})$. Since every strongly exposed point of $\Free (M)$ is indeed a molecule \cite[Proposition 1.1]{CCGMR2019}, $\SE (\Free (M))\subset \SA(M,\mathbb{R})$, we conclude that $\SA (M, \mathbb{R})$ is dense in $\Lip (M, \mathbb{R})$.
\end{proof}

We do not know whether the separability assumption can be removed in the above result. Clearly, this assumption can be replaced by the hypothesis that $\Free(M)$ admits an LUR renorming. However, we do not know which non-separable metric spaces $M$ satisfy that $\Free(M)$ admits an LUR renorming.

Let us obtain consequences of Corollary~\ref{coro:corLipschitzfree}. First of all, consider the unit sphere $\mathbb{T}$ of the Euclidean plane endowed with the inherited Euclidean metric. It is shown in \cite[Theorem 2.1]{cgmr2020} that $\SA (\mathbb{T}, \mathbb{R})$ is not dense in $\Lip (\mathbb{T}, \mathbb{R})$, hence Corollary \ref{coro:corLipschitzfree} implies $\Free(\T)$ fails property A. On the other hand, it was also observed in \cite[Theorem 2.1]{cgmr2020} that every molecule of $\mathcal F(\mathbb T)$ is a strongly exposed point hence, in particular, $\cconv\bigl(\strexp{B_{\Free (\mathbb{T})}}\bigr)=B_{\Free (\mathbb{T})}$. So, we obtain the following result.

\begin{example}\label{example:Lindenstrauss-not-sufficient}
The separable space $\Free (\mathbb{T})$ fails to have property A, while $$B_{\Free (\mathbb{T})}=\cconv\bigl(\strexp{B_{\Free (\mathbb{T})}}\bigr).$$
\end{example}

This answer the implicit question from \cite{cgmr2020} of whether $\Free(\T)$ has property A. This question was discussed during the PhD defense of Rafael Chiclana in March 2021 and this was the starting point of the research conducting to the elaboration of the present manuscript.

\begin{remark}\label{remark:torusnotA}
\begin{enumerate}
\setlength\itemsep{0.3em}
\item The arguments before Example~\ref{example:Lindenstrauss-not-sufficient} show that $X=\mathcal F(\mathbb T)$ is an example of a separable Banach space where $B_X=\cconv(\strexp{B_X})$ but $\SE(X)$ is not dense in $X^*$ (and hence $X$ fails property A). As far as we know, a previous example of this kind has not been mentioned in the literature. Even more, the functionals in $X^*$ exposing $B_X$ are also not dense in $X^*$. Indeed, every functional exposing $B_X$ attains its norm at an exposed point, hence at a extreme point. Since extreme points of $B_{\Free(\mathbb{T})}$ are molecules \cite[Theorem 1]{aliaga}, functionals exposing $B_X$ are contained in $\SA(\T,\R)$ which is not dense.
\item The result of Example \ref{example:Lindenstrauss-not-sufficient} should be compared with the fact that if $B_X = \cconv(A)$ for a set $A$ of uniformly strongly exposed points, then $\ASE(X,Y)$ is dense in $\Lin (X,Y)$ for every $Y$, see \cite[Proposition~4.2]{cgmr2020}.
\end{enumerate}
\end{remark}

There are many other examples of separable metric spaces $M$ for which $\SA(M,\R)$ is not dense and hence, $\F{M}$ fails property A by using Corollary~\ref{coro:corLipschitzfree}: when $M$ is a length metric space  \cite[Theorem 2.2]{CCGMR2019}, in particular, when $M$ is a closed convex subset of a separable Banach space. But in all these cases, the unit ball of $\Free (M)$ fails to have strongly exposed points, so they fails property A by just using Lindenstrauss's necessary condition \cite[Theorem 2]{Lin}. New examples of metric spaces $M$ for which $\SA(M,\mathbb R)$ is not dense in $\Lip(M,\mathbb R)$ have appeared recently in \cite{chiclana2021}: every metric space $M$ which is the range of a $C^1$-curve into a Banach space whose derivative is not identically $0$. As a consequence of Corollary~\ref{coro:corLipschitzfree}, we have the following example.

\begin{example}
Let $M$ be the range of a $C^1$-curve into a Banach space whose derivative is not identically $0$. Then, $\Free(M)$ fails property A.
\end{example}

Next, we show examples of Lipschitz-free spaces so that the set of strongly exposed points is countable up to rotations or discrete up to rotations. This will enlarge the class of target spaces to which the results of Section \ref{section:sufficient} can be applied.

First, in the case of some countable metric spaces, the following result gives a case in which the results of Subsection \ref{subsection:32-countable} apply.

\begin{example}\label{exam:extremeLipfree1}
If $M$ is a countable proper (i.e.\ every closed ball is compact) metric space, then $\mathcal F(M)$ has the RNP \cite{dalet} and $\strexp{B_{\mathcal F(M)}}$ is countable up to rotations (indeed, $\Mol{M}$ is bijective with a subset of $M^2$, which is countable).\newline      Therefore, Corollary \ref{coro:countably-1} can be applied to $Y=\Free(M)$ getting, for instance, that $\ASE(X,Y^*)$ is dense in $\Lin(X,Y^*)$ for every Banach space $X$ such that $\SE(X)$ is dense in $X^*$.
\end{example}

In the cases covered by the previous example, the spaces $\Free(M)$ are actually dual spaces, so Corollary \ref{coro:countably-2} can be also applied for the preduals. We need some notation. The \emph{little Lipschitz space} on a metric space $M$ is the subspace $\lip(M)$ of $\Lip(M)$ of those functions $f$ satisfying that for every $\eps>0$, exits $\delta>0$ such that $|f(x)-f(y)|\leq \eps\,d(x,y)$ when $d(x,y)<\delta$. When $M$ is countable compact, $\lip(M)^*\equiv \Free(M)$ \cite{daletPAMS}. When $M$ is countable proper, the isometric predual of $\Free(M)$ is the following space (see \cite{dalet}:
$$
\mathcal{S}(M):= \left\{f\in \lip(M)\colon \lim_{r\to +\infty} \sup_{\begin{array}{c}
        x \text{ or } y \notin \overline{B}(0,r) \\
        x\neq y
      \end{array} } \frac{|f(x)-f(y)|}{d(x,y)}=0
\right\}
$$
(which coincides with $\lip(M)$ in the case when $M$ is compact).
As we already mentioned, Corollary \ref{coro:countably-2} can be applied to get the following result with the arguments in Example \ref{exam:extremeLipfree1}.

\begin{example}\label{exam:extremeLipfree3}
Let $X$ be a Banach space such that $\SE(X)$ is dense in $X^*$.
\begin{enumerate}
  \item[(a)] If $M$ is a countable compact metric space, then $\ASE(X,\lip(M))$ is dense in $\Lin(X,\lip(M))$.
  \item[(b)] If $M$ is a countable proper metric space, then $\ASE(X,\mathcal{S}(M))$ is dense in $\Lin(X,\mathcal{S}(M))$.
\end{enumerate}
\end{example}

Finally, the results of Subsection~\ref{subsection33-discrete} can be applied for discrete metric spaces.

\begin{example}\label{exam:extremeLipfree2}
If $M$ is a discrete metric space, then $\mathcal F(M)$ has the RNP and $\strexp{B_{\mathcal F(M)}}$ is discrete up to rotations. Indeed, it has the RNP by \cite[Theorem C]{agpp2021} and $\Mol{M}$ satisfies that if a net of molecules $m_{x_\alpha,y_\alpha}$ converges weakly to $m_{x,y}$, then $d(x_\alpha,x)\rightarrow 0$ and $d(y_\alpha,y)\rightarrow 0$ \cite[Lemma 2.2]{lppr}.\newline
   Therefore, Theorem \ref{thm:RNP} can be applied to $Y=\Free(M)$ getting, for instance, that $\ASE(X,Y^*)$ is dense in $\Lin(X,Y^*)$ for every Banach space $X$ such that $\SE(X)$ is dense in $X^*$.
\end{example}

Next, we would like to apply the results of Section~\ref{section:sufficient} to provide more examples of pairs $(M, Y)$ for which the set $\SA (M,Y)$ is dense in $\Lip (M, Y)$. Observe that, given a metric space $M$ and a Banach space $Y$, then $\ASE(\mathcal F(M),Y)$ is contained in $\SA(M,Y)$ since every element of $\ASE(\mathcal F(M),Y)$ attains its norm at a strongly exposed point of $\mathcal F(M)$, which is a molecule. As a consequence, we get the following result which extends previous results from \cite{CMstudia} and \cite{ChiMar-NA}. We have not included the results which are covered by these two references.

\begin{corollary}\label{coro:SNAdensefromA}
Let $M$ be a metric space in one of the following situations:
\begin{itemize}
\setlength\itemsep{0.3em}
\item [(a)] $M$ is separable and $\mathcal F(M)$ has property A,
\item [(b)] $M$ is a compact metric space not containing any isometric copy of $[0,1]$ and satisfying that $\SA(M,\mathbb R)$ is dense and $\Lip(M,\mathbb R)$.
\end{itemize}
Let $Y$ be a Banach space in one of the following situations:
\begin{enumerate}
\setlength\itemsep{0.3em}
\item $Y$ has the RNP and $\strexp{B_Y}$ is either countable up to rotations or discrete up to rotations.
\item $Y^*$ has the RNP and $\strexp{B_{Y^*}}$ is countable up to rotations.
\item $Y^*$ has the RNP and for every sequence $\{y_n^*\}$ of elements of $\wstrexp{B_{Y^*}}$ which converges to an element $y_0^*\in \strexp{B_{Y^*}}$, there exist $n_0\in \N$ and a sequence $\{\theta_n\}$ in $\T$ such that $y_n^*=\theta_n y_0^*$ for every $n\geq n_0$.
\end{enumerate}
Then, $\SA(M,Y)$ is dense in $\Lip(M,Y)$.
\end{corollary}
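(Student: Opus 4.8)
The plan is to reduce the statement to a density result for strongly exposing functionals on $\Free(M)$ and then feed it into the machinery of Section~\ref{section:sufficient}. First I would use the standard identification $\Lip(M,Y)\equiv\Lin(\Free(M),Y)$ and the observation recorded just before the statement: since every strongly exposed point of $B_{\Free(M)}$ is a molecule \cite[Proposition~1.1]{CCGMR2019}, any operator in $\ASE(\Free(M),Y)$ attains its norm at a molecule, so $\ASE(\Free(M),Y)\subseteq\SA(M,Y)$. Hence it suffices to show that $\ASE(\Free(M),Y)$ is dense in $\Lin(\Free(M),Y)$.

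The second step is to notice that conditions (1), (2), (3) on $Y$ are precisely the situations already treated in Subsections~\ref{subsection:32-countable} and \ref{subsection33-discrete}: (1) corresponds to Theorems~\ref{coro:countably-1} and \ref{thm:RNP}, (2) to Corollary~\ref{coro:countably-2}, and (3) to Theorem~\ref{thm:dualRNP}. In each of these cases those results give that density of $\SE(X)$ in $X^*$ forces density of $\ASE(X,Y)$ in $\Lin(X,Y)$ (in fact density of the Fr\'echet differentiability points of the norm of $\Lin(X,Y)$). Taking $X=\Free(M)$, the whole corollary therefore boils down to the single claim that $\SE(\Free(M))$ is dense in $\Free(M)^*=\Lip(M,\R)$.

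It then remains to establish this density in each of the two cases on $M$. Case (a) is already Corollary~\ref{coro:corLipschitzfree}: $M$ separable gives $\Free(M)$ separable, hence LUR renormable, and property A of $\Free(M)$ together with Corollary~\ref{corollary:PropertyAimpliesSEdense} yields $\SE(\Free(M))$ dense in $\Free(M)^*$. For case (b) I would argue that, when $M$ is compact and contains no isometric copy of $[0,1]$, the strongly exposed points of $B_{\Free(M)}$ admit a metric description for which every $f\in\SA(M,\R)$ can be approximated by a Lipschitz function that strongly attains its norm at a pair $(p,q)$ with $m_{p,q}$ strongly exposed; applying Lemma~\ref{lem:RMI}(2) with scalar range then places such a function in $\overline{\SE(\Free(M))}$. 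Since $\SA(M,\R)$ is dense by hypothesis, this gives $\overline{\SA(M,\R)}\subseteq\overline{\SE(\Free(M))}$, hence $\SE(\Free(M))$ dense, and the previous step finishes. These perturbation and structural facts for compact Lipschitz-free spaces are exactly the input drawn from \cite{CMstudia,ChiMar-NA}.

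I expect the main obstacle to be case (b): in contrast with case (a), where property A plugs directly into Corollary~\ref{corollary:PropertyAimpliesSEdense}, here one only starts with density of strongly \emph{norm attaining} Lipschitz functions, and upgrading this to density of strongly \emph{exposing} functionals on $\Free(M)$ needs both the characterization of strongly exposed molecules of a compact $\Free(M)$ and the hypothesis that $M$ has no isometric copy of $[0,1]$ (to discard the decomposable pairs at which the Lipschitz supremum could be attained but which fail to give strongly exposed molecules). Everything else is bookkeeping combining the results of Section~\ref{section:sufficient} with the identification $\Lip(M,Y)\equiv\Lin(\Free(M),Y)$ and the inclusion $\ASE(\Free(M),Y)\subseteq\SA(M,Y)$.
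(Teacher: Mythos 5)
Your proposal follows essentially the same route as the paper: reduce to the denseness of $\ASE(\Free(M),Y)$ via the inclusion $\ASE(\Free(M),Y)\subseteq\SA(M,Y)$, apply Corollary~\ref{coro:countably-1}, Corollary~\ref{coro:countably-2}, Theorem~\ref{thm:RNP} or Theorem~\ref{thm:dualRNP} according to the hypothesis on $Y$, and establish the denseness of $\SE(\Free(M))$ in $\Lip(M,\R)$ separately in cases (a) and (b). The only difference is in case (b), where the paper simply quotes \cite[Theorem~3.7]{cgmr2020} (which gives the denseness of $\SE(\Free(M))$ directly under exactly those hypotheses on $M$), while you sketch a re-derivation of that fact; your sketch is plausible but unnecessary, since the needed statement is available as a citable result.
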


\begin{proof}
Observe that, by Theorem \ref{thm:LURrenorming} in the case (a) and by \cite[Theorem 3.7]{cgmr2020} in case (b), $\SE(\mathcal F(M))$ is dense in $\Lip(M,\mathbb R)$. Consequently, depending on the assumptions on the space $Y$, Corollary \ref{coro:countably-1}, \ref{coro:countably-2}, Theorem \ref{thm:RNP} or \ref{thm:dualRNP} concludes the result.
\end{proof}

\subsection{Bilinear forms and tensor product spaces}

In this subsection, we will give applications to the study of norm attaining bilinear forms.

Given Banach spaces $X, Y$ and $Z$, we let the notation $\mathcal{B} (X \times Y, Z)$ stand for the space of all continuous bilinear mappings from $X \times Y$ to $Z$ endowed with the norm $$\|B\| = \sup\{ \|B(x,y)\| \colon x \in B_X, \, y \in B_Y\}$$ for every $B\in \mathcal{B} (X \times Y, Z)$. When $Z =\mathbb{K}$, we simply denote the space by $\mathcal{B} (X \times Y)$. A bilinear mapping $B \in \mathcal{B} (X \times Y, Z)$ is said to be \emph{norm attaining} if the supremum defining $\|B\|$ is actually a maximum.  Let us denote by $\NA \mathcal{B} (X \times Y, Z)$ the set of all norm attaining bilinear mappings in $\mathcal{B} (X \times Y, Z)$.

Before exhibiting classical results in theory of denseness of bilinear mappings, let us exhibit the strong connection with norm attainment of bounded operators, for which we need to explain the useful language of tensor product spaces. The projective tensor product of $X$ and $Y$, denoted by $X \pten Y$, is the completion of the space $X \otimes Y$ endowed with the norm given by
\begin{eqnarray*}
\|z\|_{\pi} &=& \inf \left\{ \sum_{n=1}^{\infty} \|x_n\| \|y_n\|\colon \sum_{n=1}^{\infty} \|x_n\| \|y_n\| < \infty, z = \sum_{n=1}^{\infty} x_n \otimes y_n \right\} \\
&=& \inf \left\{ \sum_{n=1}^{\infty} |\lambda_n|\colon z = \sum_{n=1}^{\infty} \lambda_n x_n \otimes y_n, \sum_{n=1}^{\infty} |\lambda_n| < \infty, \|x_n\| = \|y_n\| = 1 \right\},
\end{eqnarray*}
where the infimum is taken over all such representations of $z$. It is well-known that $\|x \otimes y\|_{\pi} = \|x\| \|y\|$ for every $x \in X$, $y \in Y$, and the closed unit ball of $X \pten Y$ is the closed convex hull of the set $B_X \otimes B_Y = \{ x \otimes y\colon x \in B_X, y \in B_Y \}$. We refer the reader to \cite{ryan} for background on tensor product theory.

It is known that the three spaces $\mathcal B(X\times Y)$, $\Lin(X,Y^*)$ and $(X\pten Y)^*$ are isometrically isomorphic. Given $B\in \mathcal B(X\times Y)$, then $B$ can be seen as an operator $T_B\colon X\longrightarrow Y^*$ acting as $T_B(x)(y):=B(x,y)$. Moreover, $B$ can be seen as a linear functional acting on $X\pten Y$ as follows:
$$B\left(\sum_{i=1}^n x_i\otimes y_i \right):=\sum_{i=1}^n B(x_i,y_i).$$
Consequently, given $B\in \mathcal B(X\times Y)$ we have three different ways in which we can consider that $B$ is norm attaining: if $B$ is norm-attaining as a bilinear mapping, as an operator in $\Lin(X,Y^*)$, and as a functional on $X\pten Y$. Among all of them, the strongest notion is the one inherited from $\mathcal B(X\times Y)$.

On the one hand, if $B\in\mathcal B(X\times Y)$ then $B$ attains its norm as bilinear form if, and only if, $B$ attains its norm as bounded linear functionals on $X\pten Y$ at a point of the form $z=x\otimes y\in S_X\otimes S_Y$. Consequently, every bilinear form which attains its norm as bilinear form is a norm-attaining linear functional on $X\pten Y$. Let us mention that the converse is not true (see e.g.\ \cite[Remark 5.7]{Godefroy-survey-2015}).

On the other hand, if $B\in\mathcal B(X\times Y)$, then $B$ attains its norm as bilinear form at $(x,y)\in S_X\times S_Y$ if, and only if, the associated operator $T_B\colon X\longrightarrow Y^*$ attains its operator norm at $x$ and satisfies that $T_B(x)\in \NA(Y,\mathbb K)$. Let us also mention that the norm attaining of $B$ as bilinear form and the one of $T_B$ as operator are different, as also are the denseness associated to these two notions of norm attainment, see \cite{Choi1997,FinetPaya}.

In view of the previous connection between the different notion of norm attainment for a bilinear mapping, it is natural that sufficient conditions for the density of norm attaining bounded operators are behind most of the results in the literature about density of norm attaining bilinear mappings. Let us mention, for instance, that if $X$ has property $\alpha$ \cite{PS} (or even quasi-$\alpha$ \cite{CS}), then $\NA \mathcal{B} (X \times Y)$ is dense in $\mathcal{B} (X \times Y)$ with no assumption on $Y$. Moreover, it is observed in \cite{CS} that if $X$ has property quasi-$\alpha$ and $Y$ has property A, then actually $X \pten Y$ has property A. Recall that a Banach space $X$ has \textit{property quasi-$\alpha$} \cite{CS} if there exist a subset $\{x_\lambda\}_{\lambda \in \Lambda}$ of $S_X$, a subset $\{x_\lambda^*\}_{\lambda \in \Lambda}\subseteq S_{X^*}$, and $\rho \colon \Lambda \longrightarrow \mathbb{R}$ such that
			\begin{itemize}
				\item[(a)] $x^*_\lambda (x_\lambda)=1 $ for all $\lambda \in \Lambda$.
				\item[(b)] $|x^*_\lambda(x_\mu)| \leq \rho(\mu)<1$ for all $x_\lambda \neq x_\mu$.
				\item[(c)] For every $e^{**} \in \ext{B_{X^{**}}}$, there exists a subset $A_{e^{**}} \subseteq A$ such that $e^{**}$ belong to $\overline{A_{e^{**}}}^{\omega^*}$ and $r_{e^{**}}=\sup\{\rho(\mu)\colon x_\mu \in A_{e^{**}}\}<1$.
			\end{itemize}
If there is $0<R<1$ such that $r_{e^{**}}\leq R$ for every $e^{**}\in \ext{B_{X^{**}}}$, then the space $X$ has \emph{property $\alpha$}.

We can obtain stronger results than the mere denseness of $\NA \mathcal B(X\times Y)$ by using Theorem~\ref{thm:LURrenorming}. Let us say that a bilinear mapping $B \in \mathcal{B} (X \times Y, Z)$ is a \emph{strongly norm attaining bilinear mapping} if there exists $(x_0, y_0) \in B_X \times B_Y$ such that whenever a sequence $\{(x_n, y_n)\} \subset B_X \times B_Y$ satisfies $\| B(x_n, y_n)\| \longrightarrow \|B\|$, then there exists a subsequence $\{x_{k_n}, y_{k_n}\}$ such that $\{x_{k_n}\}$ and $\{y_{k_n}\}$  converge to $\alpha x_0$ and $\beta y_0$ for some $\alpha, \beta \in \mathbb{K}$ with $|\alpha| =|\beta| =1$, respectively.

Now we have the following result.

\begin{corollary}\label{cor:ChoiSong}
Let $X$ and $Y$ be Banach spaces. Suppose that $X$ has property quasi-$\alpha$ and $Y$ has property A.
Suppose that one of the following conditions holds:
\begin{enumerate}
\setlength\itemsep{0.3em}
\item $X$ and $Y$ both are separable,
\item either $X$ or $Y$ has the Dunford-Pettis property, and both $X$ and $Y$ are WCG spaces.
\end{enumerate}
Then, the set of strongly norm attaining bilinear forms in $\mathcal{B} (X \times Y)$ is dense in $\mathcal{B} (X \times Y)$.
\end{corollary}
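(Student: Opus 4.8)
The plan is to move to the projective tensor product and apply Theorem~\ref{thm:LURrenorming} to a carefully chosen bounded set. Recall that $\mathcal{B}(X\times Y)$ is isometrically the dual of $Z:=X\pten Y$ under the pairing $\langle B,x\otimes y\rangle=B(x,y)$, and put $C:=\{x\otimes y\colon x\in B_X,\ y\in B_Y\}\subset B_Z$. I would first record two facts about $Z$. \emph{(i)} The set $C$ has the Bishop-Phelps property in $Z$: an operator $T\in\Lin(Z,W)$ attains $\sup_{z\in C}\|Tz\|\,(=\|T\|)$ exactly when the associated bilinear map $X\times Y\to W$ attains its norm at an elementary tensor, so this is precisely the statement that the norm attaining bilinear maps from $X\times Y$ into every Banach space $W$ are dense, which is the vector valued form of the theorem of \cite{CS} valid when $X$ has property quasi-$\alpha$ and $Y$ has property~A. \emph{(ii)} $Z$ admits an equivalent LUR norm: in case~(1), $Z=X\pten Y$ is separable and hence LUR-renormable \cite{Troy}; in case~(2), the projective tensor product of two WCG spaces one of which has the Dunford-Pettis property is again WCG, hence again LUR-renormable \cite{Troy}. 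Granting (i) and (ii), Theorem~\ref{thm:LURrenorming}(a) applied to $C$ inside $Z$ gives that $\SE(C)$ is dense in $Z^*=\mathcal{B}(X\times Y)$.

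It then suffices to show that every $B\in\SE(C)$ is a strongly norm attaining bilinear form. Normalize $\|B\|=1$ and let $B$ strongly expose $C$ at $z_0\in C$. Since $z_0\in C$, it is an elementary tensor $z_0=x_0\otimes y_0$ with $\|x_0\|,\|y_0\|\le1$; and as $\sup_{z\in C}\re\langle B,z\rangle=\|B\|=1$, this supremum is attained at $z_0$, forcing $B(x_0,y_0)=1$ and $\|x_0\|=\|y_0\|=1$. Consider $\phi:=B(\cdot,y_0)\in X^*$ and $\psi:=B(x_0,\cdot)\in Y^*$, both of norm one with $\phi(x_0)=\psi(y_0)=1$. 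I claim $\phi$ strongly exposes $B_X$ at $x_0$: if $x_n\in B_X$ with $\re\phi(x_n)\to1$, then $x_n\otimes y_0\in C$ and $\re\langle B,x_n\otimes y_0\rangle\to\sup_C\re B$, so the strong exposition of $C$ forces $x_n\otimes y_0\to x_0\otimes y_0$ in $Z$, i.e.\ $\|(x_n-x_0)\otimes y_0\|_\pi=\|x_n-x_0\|\to0$. Symmetrically, $\psi$ strongly exposes $B_Y$ at $y_0$.

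Now take any $\{(x_n,y_n)\}\subset B_X\times B_Y$ with $|B(x_n,y_n)|\to1$. Since $|B(x_n,y_n)|\le\|x_n\|\,\|y_n\|\le1$ we get $\|x_n\|,\|y_n\|\to1$, so after normalizing (which perturbs the sequences by $o(1)$ and hence changes nothing in the conclusion) we may assume $\|x_n\|=\|y_n\|=1$. Pick $\theta_n\in\T$ with $a_n:=\theta_nx_n$ so that $B(a_n,y_n)=|B(x_n,y_n)|\to1$; then $a_n\otimes y_n\in C$ and $\re\langle B,a_n\otimes y_n\rangle\to\sup_C\re B$, so $a_n\otimes y_n\to x_0\otimes y_0$ in $Z$. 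Testing against $\phi\otimes\psi\in\mathcal{B}(X\times Y)$, which has norm $\|\phi\|\,\|\psi\|=1$, yields $\phi(a_n)\psi(y_n)\to\phi(x_0)\psi(y_0)=1$; since $|\phi(a_n)|,|\psi(y_n)|\le1$ this forces $|\phi(a_n)|\to1$ and $|\psi(y_n)|\to1$. Writing $\phi(a_n)=|\phi(a_n)|\e^{i\mu_n}$ gives $\re\phi(\e^{-i\mu_n}a_n)\to1=\|\phi\|$, so $\e^{-i\mu_n}a_n\to x_0$ by the previous paragraph; likewise $\e^{-i\nu_n}y_n\to y_0$ for suitable $\nu_n$. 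Passing to a subsequence along which $\theta_{k_n}\to\theta_0$, $\e^{i\mu_{k_n}}\to\omega$ and $\e^{i\nu_{k_n}}\to\zeta$ in the compact set $\T$, we obtain $x_{k_n}\to\overline{\theta_0}\,\omega\,x_0$ and $y_{k_n}\to\zeta\,y_0$, scalars of modulus one; hence $B$ strongly attains its norm as a bilinear form at $(x_0,y_0)$. By density of $\SE(C)$ this finishes the proof.

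The content of the argument is concentrated in the two facts of the first paragraph. Fact (i) is exactly where the hypotheses ``$X$ has property quasi-$\alpha$'' and ``$Y$ has property~A'' are consumed, through the construction of \cite{CS} (the distinguished family of $X$ together with the density of norm attaining operators into $Y$), and I expect it to be the main point to check carefully, including that the argument runs with vector valued range $W$. Fact (ii), and in particular the stability of WCG under projective tensor products in the presence of the Dunford-Pettis property, is the reason the extra hypotheses in case~(2) appear; everything after that -- the passage through Theorem~\ref{thm:LURrenorming} and the bookkeeping with rotations and subsequences -- is routine.
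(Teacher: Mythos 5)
Your proof is correct, but it takes a genuinely different route from the paper's, and the difference is worth spelling out. The paper applies Theorem~\ref{thm:LURrenorming} to the \emph{unit ball} of $X\pten Y$: it quotes from \cite{CS} only that $X\pten Y$ has property A, deduces from Corollary~\ref{corollary:PropertyAimpliesSEdense} that $\SE(X\pten Y)$ is dense in $\mathcal{B}(X\times Y)$, and then invokes Werner's theorem \cite{Werner} on denting points of projective tensor products to conclude that the strongly exposed point $\mu\in B_{X\pten Y}$ must be of the form $x_0\otimes y_0$ with $x_0\in\strexp{B_X}$ and $y_0\in\strexp{B_Y}$. You instead apply Theorem~\ref{thm:LURrenorming} to the smaller set $C=B_X\otimes B_Y$, whose Bishop--Phelps property is exactly the density of $\NA\mathcal{B}(X\times Y,W)$ for every $W$. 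This trades one external input for another: you no longer need \cite{Werner} (your exposing point lies in $C$, hence is an elementary tensor by fiat, and your $\phi$--$\psi$ argument extracts the strong exposedness of the factors directly from the strong exposition of $C$), but you do need the \emph{vector-valued, elementary-tensor} form of the Choi--Song density theorem, which is strictly stronger than ``$X\pten Y$ has property A'' as literally cited in the paper. You correctly flag this as the point to check; it does hold (the proof in \cite{CS} that $X\pten Y$ has property A proceeds precisely by producing bilinear maps into an arbitrary range that attain their norms at elementary tensors), but a referee would want that made explicit rather than attributed to ``the theorem of \cite{CS}''. The renorming step (separability, or WCG via the Dunford--Pettis hypothesis and \cite[Theorem 16]{Diestel}) is identical in both arguments, and your final bookkeeping with $\phi\otimes\psi$ and rotations is in fact a more careful write-up of the deduction the paper compresses into its last two sentences. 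In short: same skeleton (LUR renormability plus Theorem~\ref{thm:LURrenorming}), but a different choice of the set to which the theorem is applied, with the cost shifted from Werner's denting-point theorem to a sharper form of the Choi--Song result.
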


\begin{proof}
Notice that, in any case, $X \pten Y$ is a WCG space (in the case (2), it follows from \cite[Theorem 16]{Diestel}). Moreover, $X \pten Y$ has property A as $X$ has property quasi-$\alpha$ and $Y$ has property A \cite{CS}. It follows from Corollary \ref{corollary:PropertyAimpliesSEdense} that $\SE (X \pten Y)$ is dense in $(X \pten Y)^* = \mathcal{B} (X \times Y)$. Suppose that $B \in (X \pten Y)^*$ strongly exposes $B_{X \pten Y}$ at some $\mu \in B_{X\pten Y}$. By \cite{Werner}, we have that $\mu = x_0 \otimes y_0$ for some $x_0 \in \strexp{B_X}$ and $y_0 \in \strexp{B_Y}$. Now, if a sequence $\{(x_n, y_n)\} \subset B_X \times B_Y$ satisfies $| B(x_n, y_n) | \longrightarrow \|B\|$, then $\{(x_n \otimes y_n)\} \longrightarrow ( \theta x_0) \otimes y_0$ for some $\theta \in \mathbb{K}$ with $|\theta|=1$. From this, we have that there are subsequences $\{x_{k_n}\}$ and $\{y_{k_n}\}$ such that $\{x_{k_n}\}$ converges to $\alpha x_0$ and $\{y_{k_n}\}$ converges to $\beta y_0$ for some $\alpha, \beta \in \mathbb{K}$ with $|\alpha| =|\beta| =1$.
\end{proof}

It is known that if $X$ and $Y$ are Banach spaces having RNP, then $\NA \mathcal{B} (X \times Y, Z)$ is dense in $\mathcal{B} (X \times Y, Z)$ for every Banach space $Z$ \cite{AFW} (compare this with the fact that there exists a Banach space $E$ with RNP such that $E \pten E$ fails to have RNP \cite{BourgainPisier}). This, in particular, shows that $X \pten Y$ has property A provided $X$ and $Y$ have the RNP. Thus, the same argument as in the proof of Corollary \ref{cor:ChoiSong} yields the following result.

\begin{corollary}\label{cor:bilinearRNP}
Let $X$ and $Y$ be Banach space. Suppose that $X$ and $Y$ have the RNP and one of the following conditions holds:
\begin{enumerate}
\setlength\itemsep{0.3em}
\item $X$ and $Y$ both are separable,
\item either $X$ or $Y$ has the Dunford-Pettis property, and both $X$ and $Y$ are WCG spaces.
\end{enumerate}
Then, the set of strongly norm attaining bilinear forms in $\mathcal{B} (X \times Y)$ is dense in $\mathcal{B} (X \times Y)$.
\end{corollary}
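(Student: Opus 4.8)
The plan is to mimic the proof of Corollary~\ref{cor:ChoiSong} essentially verbatim, since the only role played there by the hypothesis ``$X$ has property quasi-$\alpha$ and $Y$ has property A'' was to guarantee that $X\pten Y$ has property A. Here that conclusion is provided instead by the theorem of Aron--Finet--Werner \cite{AFW}: if $X$ and $Y$ both have the RNP, then $\NA\mathcal B(X\times Y,Z)$ is dense in $\mathcal B(X\times Y,Z)$ for every Banach space $Z$, and in particular (taking $Z=\K$ and identifying $\mathcal B(X\times Y)$ with $(X\pten Y)^*$) the set of norm attaining functionals on $X\pten Y$ of the special form $x\otimes y$ is dense; a fortiori $\NA(X\pten Y,\K)$ is dense, so $X\pten Y$ has Lindenstrauss property A.

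So, concretely, I would proceed as follows. First, observe that in either case (1) or (2) the projective tensor product $X\pten Y$ is a WCG space: in case (1) this is because the projective tensor product of two separable spaces is separable, and in case (2) it follows from \cite[Theorem 16]{Diestel} (which requires the Dunford--Pettis property of one factor together with both factors being WCG). Hence $X\pten Y$ admits an equivalent LUR renorming by \cite[Theorem 1]{Troy}. Second, since $X$ and $Y$ have the RNP, $X\pten Y$ has property A by \cite{AFW} as explained above. Applying Corollary~\ref{corollary:PropertyAimpliesSEdense}(a), we conclude that $\SE(X\pten Y)$ is dense in $(X\pten Y)^*=\mathcal B(X\times Y)$.

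Third, I would translate the strong exposition back to the bilinear picture exactly as in the proof of Corollary~\ref{cor:ChoiSong}. Suppose $B\in(X\pten Y)^*$ strongly exposes $B_{X\pten Y}$ at some $\mu\in B_{X\pten Y}$. By the result of Ruess and Werner \cite{Werner} on strongly exposed points of projective tensor products, $\mu=x_0\otimes y_0$ with $x_0\in\strexp{B_X}$ and $y_0\in\strexp{B_Y}$. Now if a sequence $\{(x_n,y_n)\}\subset B_X\times B_Y$ satisfies $|B(x_n,y_n)|\to\|B\|$, then $\{x_n\otimes y_n\}\to(\theta x_0)\otimes y_0$ in $X\pten Y$ for some $\theta\in\K$ with $|\theta|=1$ (because $B$ strongly exposes at $x_0\otimes y_0$ and, after multiplying by a suitable unimodular scalar, $\re B(x_n\otimes y_n)\to\|B\|$), and from the convergence of the elementary tensors one extracts subsequences $\{x_{k_n}\}$ and $\{y_{k_n}\}$ converging respectively to $\alpha x_0$ and $\beta y_0$ with $|\alpha|=|\beta|=1$. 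Thus $B$ is a strongly norm attaining bilinear form, and the density of $\SE(X\pten Y)$ in $\mathcal B(X\times Y)$ gives the density of the strongly norm attaining bilinear forms.

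I do not expect any real obstacle here: the statement is a direct corollary of Corollary~\ref{corollary:PropertyAimpliesSEdense} once one inputs the two external facts --- that $X\pten Y$ is WCG (hence LUR-renormable) and that $X\pten Y$ has property A (via \cite{AFW}) --- and uses Ruess--Werner's description of strongly exposed points of $X\pten Y$. The only mildly delicate point, already handled implicitly in Corollary~\ref{cor:ChoiSong}, is the passage from $|B(x_n,y_n)|\to\|B\|$ to norm convergence of the elementary tensors $x_n\otimes y_n$: one must first absorb a unimodular phase so that $\re B(x_n\otimes y_n)\to\|B\|=\|B\|\cdot\|x_0\otimes y_0\|$, after which the definition of strong exposition applies directly. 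Since $\|x_n\otimes y_n\|_\pi=\|x_n\|\,\|y_n\|\le 1$, this is exactly the setting of the strong-exposition hypothesis, and the argument goes through verbatim as in the cited corollary.
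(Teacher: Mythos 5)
Your overall route is exactly the paper's: the paper proves this corollary by the remark preceding it (that $X\pten Y$ has property A when $X$ and $Y$ have the RNP, via \cite{AFW}) followed by the words ``the same argument as in the proof of Corollary~\ref{cor:ChoiSong}'', which is precisely your second and third steps (WCG, hence LUR-renormable, hence $\SE(X\pten Y)$ dense by Corollary~\ref{corollary:PropertyAimpliesSEdense}, then Werner's description of strongly exposed points of $B_{X\pten Y}$ to pass back to bilinear forms). That part of your write-up is fine and matches the paper's level of detail.

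There is, however, one incorrect inference in your justification of the intermediate claim that $X\pten Y$ has property A. You write that, taking $Z=\K$ in the Aron--Finet--Werner theorem, the set of functionals on $X\pten Y$ attaining their norm at elementary tensors is dense, ``a fortiori $\NA(X\pten Y,\K)$ is dense, so $X\pten Y$ has Lindenstrauss property A.'' Denseness of $\NA(X\pten Y,\K)$ is just the Bishop--Phelps theorem and holds for every Banach space; it does not imply property A, which requires denseness of $\NA(X\pten Y,Z)$ in $\Lin(X\pten Y,Z)$ for \emph{every} Banach space $Z$. The correct deduction --- which is the one intended in \cite{CS} and in the paper --- uses the full strength of \cite{AFW}: for an arbitrary Banach space $Z$ one has the isometric identification $\mathcal{B}(X\times Y,Z)\equiv\Lin(X\pten Y,Z)$, and a bilinear map that attains its norm in the bilinear sense corresponds to an operator on $X\pten Y$ attaining its norm at some $x\otimes y$ with $x\in B_X$, $y\in B_Y$, hence to a norm attaining operator. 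Since $\NA\mathcal{B}(X\times Y,Z)$ is dense for every $Z$ by \cite{AFW}, this gives denseness of $\NA(X\pten Y,Z)$ for every $Z$, i.e., property A. You already quote the AFW statement for arbitrary $Z$, so the fix is a one-line rewording, but as written the step ``$\NA(X\pten Y,\K)$ dense $\Rightarrow$ property A'' is a non sequitur and must be replaced.
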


\vspace{1cm}
{\small \textbf{Acknowledgements:} This paper was partially written when the first author was visiting the University of Granada and he would like to acknowledge the hospitality that he received there. The authors would like to thank Antonio Avil\'{e}s, Luis Carlos Garc\'{\i}a-Lirola, Gilles Godefroy, Manuel Maestre, Vicente Montesinos, and Rafael Pay\'a for kindly answering several inquiries related to the topics of the paper.}

{\small M.~Jung was supported by NRF (NRF-2019R1A2C1003857), by POSTECH Basic Science Research Institute Grant (NRF-2021R1A6A1A10042944) and by a KIAS Individual Grant (MG086601) at Korea Institute for Advanced Study. M.~Mart\'in was supported by Spanish AEI Project PGC2018-093794-B-I00/AEI/10.13039/501100011033 (MCIU/AEI/FEDER, UE), by Junta de Andaluc\'ia I+D+i grants P20\_00255, A-FQM-484-UGR18, and FQM-185, and by ``Maria de Maeztu'' Excellence Unit IMAG, reference CEX2020-001105-M funded by MCIN/AEI/10.13039/501100011033. A.~Rueda Zoca was supported by MTM2017-86182-P (Government of Spain, AEI/FEDER, EU), by Spanish AEI Project PGC2018-093794-B-I00/AEI/ 10.13039/501100011033 (MCIU/AEI/FEDER, UE), by Fundaci\'on S\'eneca, ACyT Regi\'on de Murcia grant 20797/PI/18, by Junta de Andaluc\'ia Grant A-FQM-484-UGR18, and by Junta de Andaluc\'ia Grant FQM-0185.}

\end{document}